\documentclass[article]{IEEEtran}

\usepackage{cite}

\usepackage{amsmath,amssymb,amsfonts,amsthm}
\usepackage{xcolor}
\usepackage{textcomp}
\usepackage{mathrsfs}
\usepackage{enumitem}
\usepackage{caption}
\usepackage{algorithm}
\usepackage{bbm}
\usepackage{algpseudocode}
\usepackage{tikz}
\usepackage{graphics}
\usepackage{epsfig}
\usepackage{verbatim}
\usepackage{float}
\usepackage{graphicx}
\usepackage{multirow}
\usepackage{epstopdf}
\usepackage{color}

\pdfoutput=1

\usetikzlibrary{shapes.geometric, arrows, positioning, arrows.meta, decorations.markings}

\newtheorem{definition}{Definition}[section]%
\newtheorem{theorem}[definition]{Theorem}%
\newtheorem{proposition}[definition]{Proposition}%
\newtheorem{lemma}[definition]{Lemma}%
\newtheorem{assumption}[definition]{Assumption}%
\newtheorem{corollary}[definition]{Corollary}%
\newtheorem{remark}[definition]{Remark}%
\newtheorem{example}[definition]{Example}%

\usepackage{color}
\newcommand{\sy}[1]{{\color{magenta} #1}}

\begin{document}

\title{Approximations and Learning for Decentralized Stochastic Control and Near Optimal Finite Window Policies}

\author{Omar Mrani-Zentar and Serdar Y\"{u}ksel%
\thanks{O. Mrani-Zentar and S. Y\"uksel are with the Department of Mathematics and Statistics, Queen's University, Kingston, Ontario, Canada.}}

\maketitle

\begin{abstract}
Decentralized stochastic control problems are difficult to study due to information structure dependent subtleties, which prevent many classical methods in stochastic control from being applicable. In this paper we consider such problems with general standard Borel spaces under two related information structures. (a) the one-step delayed information sharing pattern (OSDISP) where agents share their information with one-step delay, and (b) the $K$-step periodic information sharing pattern (KSPISP), where information is shared periodically. It is known that OSDISP and KSPISP problems admit a centralized reduction where the agents view the problem from the perspective of a centralized controller that uses the common information to prescribe function valued actions (local policies) which map each agent's private information to an optimal action in the original problem. We provide rigorous approximation results and performance bounds for the KSPISP and OSDISP problems, which results from replacing the full common information by a finite sliding window of information and we establish near optimality of such policies. The latter depends on a predictor stability condition in expected total variation. As a further contribution, we show that under the information structures provided, corresponding Q-learning algorithms (in quantized or finite memory forms) converge asymptotically to near optimal solutions. While restrictive and hypothetical conditions have been presented in the literature, our contributions are thus to provide, to our knowledge, the first explicit conditions and rigorous approximation and learning results for such decentralized problems with general spaces.
\end{abstract}

\begin{keywords}
  Decentralized Stochastic Control, Partially Observed Markov Decision Processes, Information Structures
\end{keywords}

\section{Introduction and Problem Description} \label{problem description}

An increasingly important research area of mathematical and practical interest is {\it decentralized stochastic control} or {\it team theory}, which involves multiple decision makers (DMs) who strive for a common goal but who have access only to some noisy measurements regarding the state of their environment. Applications include energy systems, the smart grid, sensor networks, and networked control systems, among others \cite{wit75} (see \cite{YukselBasarBook24} for a comprehensive review). There still exist problems (such as Witsenhausen's counterexample \cite{wit68}) which have defied solution attempts for more than 50 years. Existence and rigorous approximation results have only recently been obtained. Accordingly, learning theory for such systems entails many open problems. 

In this paper, we consider such a decentralized stochastic control model with $N \in \mathbb{N}$ agents with general state, measurement, and action spaces and under a variety of information structures. 

Let $\mathbb{X};\mathbb{U}^{1},\cdots,\mathbb{U}^N;\mathbb{Y}^{1},\cdots,\mathbb{Y}^N$ be locally compact subsets of standard Borel spaces, i.e., Borel subsets of metric Polish spaces (complete, and separable metric spaces); here $\mathbb{X}$ denotes the state space, the action spaces for agents $1,...,N$ are $\mathbb{U}^{1},...,\mathbb{U}^N$, and their measurement spaces are given by $\mathbb{Y}^{1},...,\mathbb{Y}^N$. We denote by $\mathcal{B}(\mathbb{X})$ the Borel $\sigma$-algebra on $\mathbb{X}$ and the same notation holds for all spaces. At every time $t \in \mathbb{Z}_+$ the system dynamics evolve as follows:
\begin{eqnarray} 
   \label{Prob 1} x_{t+1}&=&\Phi(x_{t},u^{1}_{t},...,u^{N}_{t},v_{t})    \\
   \label{obs 1} \forall i \in \{1,..,N\} \text{: } y^{i}_{t}&=&\phi^i(x_{t},w^{i}_{t}) 
\end{eqnarray}
where $\{ v_{t}\}_{t}$ and $\{ w^{i}_{t}\}_{t}$ are i.i.d noise processes for all $i \in \{1,..,N\}$. Additionally, $\Phi$ and $\phi^i$ are Borel measurable functions.  We assume that all random variables are defined on a common probability space $(\Omega, {\cal F}, P)$. The dynamics and measurement equations above induce the following kernels for any Agent $i$: let $A \in {\cal B}(\mathbb{X})$ and $B \in {\cal B}(\mathbb{Y}^{i})$: 
\begin{align}
  &  \mathcal{T}(A|x,u^{1},...,u^{N})\nonumber \\
   &  \qquad \qquad :=P(x_{t+1} \in A|x_t=x,u^1_t=u^{1},...,u^N_t=u^{N})\\
  &  Q^{i}(B|x):=P(y_{t} \in B|x_t=x)
\end{align}

At every time $t$, each Agent $i$ has access to their local information 
\begin{align}\label{localInf}
I_{t}^{i}=(y_{[0,t]}^{i}, u_{[0,t-1]}^{i})
\end{align}
as well as some common information $I_{t}^{C}$, to be discussed below. We denote the spaces in which the random variables $I_{t}^{i}$ and $I_{t}^{C}$ take values by $\mathbb{I}_{t}^{i}$ and $\mathbb{I}_{t}^{C}$ respectively. Each Agent applies a policy $\gamma^{i}=\{\gamma^{i}_{t} \}_{t\in \mathbb{Z}^{+}} \in \Gamma^i$, such that $\gamma_{t}^{i}:\mathbb{I}_{t}^{C}\times \mathbb{I}_{t}^{i}\rightarrow \mathbb{U}^{i}$ and is $\sigma(I_{t}^{i},I_{t}^{C})$-measurable, with the objective of minimizing 
\begin{equation} \label{cost}
   J(\gamma)= E^{\gamma}[\sum_{t=0}^{\infty} \beta^{t} c(x_{t},\mathbf{u_{t}})], 
\end{equation}
where $\beta \in (0,1)$ is the discount factor, $\gamma=(\gamma^{1},...,\gamma^{N})$ is the team policy, and $c: \mathbb{X} \times \mathbb{U} \rightarrow \mathbb{R}$ is a cost functional which is assumed to be continuous and bounded. We refer to the set of all admissible team policies as $\Gamma:= \prod\limits_{k=1}^N \Gamma^{k}$. We also assume that $x_{0}\sim \mu$ and that the initial distribution $\mu$ is known to all agents at time $t=0$. Next, we will describe the information structures that will be considered in this paper. 

\subsection{Information structures} \label{information structures}
We define the various information structures that we will consider in this paper. In all cases, each Agent has access to their own measurements and previous actions given in (\ref{localInf}). However, the additional information shared among agents, i.e., the common information will vary.
\begin{itemize}
    \item[(i)] \textit{One-step delayed information sharing pattern (OSDISP)}: \[I_{t}^{C}=\{y_{[0,t-1]}^{[1,N]},u_{[0,t-1]}^{[1,N]}\}.\]
    \item[(ii)] $K$\textit{-step periodic information sharing pattern (KSPISP)}: Let $t=qK+r$ where $q$ and $r$ are non-negative integers such that $r\leq K-1$ and $K\in \mathbb{N}$ represents the period between successive information sharing between the Agents. Here the common information at time $t$ is given by $I^{C}_{t}=\{y^{[1,N]}_{[0,qK-1]},u^{[1,N]}_{[0,qK-1]}\}$.
    \item[(iii)] \textit{Completely decentralized information structure (CDIS)}: $I_{t}^{C}=\{\emptyset\}$ 
\end{itemize}
\begin{remark}
    We note that when $K=1$, the KSPISP problem simply reduces to the OSDISP problem. However, to establish many of the useful properties for the KSPISP problem it will be beneficial to first establish those properties for the OSDISP problem, which also present more relaxed conditions. We also note that while this paper does not address the CDIS directly, an important motivation for the KSPISP is that a solution to the KSPISP as $K$ becomes large provides improved lower bounds for CDIS.
\end{remark}

\subsection{Literature Review and Contributions}
 
 Decentralized stochastic control problems are also inherently difficult because of their non-classical information structures \cite{Witsenhausen}, \cite[Section 3.2]{YukselBasarBook24} which render tools designed for centralized control problems inapplicable. 
 
 Nonetheless, dynamic programming methods are known to be applicable by lifting the problem. One way is via what is now commonly termed as the {\it common information approach} \cite{NayyarMahajanTeneketzis,NayyarMahajanTeneketzis2,NayyarBookChapter}, which also builds on prior studies such as \cite{yos75,VaraiyaWalrand,OoiWornell,YukTAC09,kur79,NayyarMahajanTeneketzis2}. While the above focused on finite, linear, or otherwise more restrictive spaces, \cite{Saldi2023Commoninformationapproachstaticteamproblems} considered static problems involving standard Borel spaces.
  
 For the OSDISP, it was shown in \cite{VaraiyaWalrand,yos75} that optimal policies are of separated form when the state, action, and measurement spaces are all finite. In \cite{OMZSY-centralized-reduction} this was extended to the case of standard Borel spaces where it was shown that OSDISP admits a centralized reduction whereby the state consists of the one-step predictor conditioned on the common information, and the action at any given time consists of a vector of functions where each function maps the private information of some agent to their action. Additionally, the centralized reduction is weak-Feller which, via measurable selection conditions \cite[Chapter 3]{HernandezLermaMCP}, enables one to apply dynamic programming \cite{OMZSY-centralized-reduction}. In \cite{OMZSY-centralized-reduction} this was used to establish existence results for OSDISP problems and will be crucial for establishing the convergence of a quantized Q-learning algorithm we propose to attain a near-optimal solution. For the KSPISP, it was shown in \cite{OoiWornell} (see also \cite{YukTAC09}) that optimal policies are of separated form when the state space, action spaces, and measurement spaces are all finite. Under these same assumptions it was shown in \cite{YukTAC09} that such problems admit a centralized reduction. This was later generalized in \cite{OMZSY-centralized-reduction} for the case of general standard Borel spaces. As will be seen in this paper, the weak-Feller regularity will be crucial for the development of numerical learning techniques with performance guarantees. 

 Another dynamic programming approach is via considering unconditional probability measures and their evolution under measure valued actions; see \cite{WitsenStandard,YukselWitsenStandardArXiv}.

The numerical complexity of decentralized stochastic control problems has been a significant impediment to the development of suitable numerical methods \cite{bernstein2002complexity}. As a result, the approach taken in the literature to numerical learning for decentralized problems consists mainly of algorithms that converge but with no theoretical performance guarantees  (see \cite{mao2023decentralized,gupta2017cooperative,hu2019simplified,wu2013monte,liu2015stick,kraemer2016multi,Zhou-pomdp-comp} for example). 
 
When considering a decentralized stochastic control problem under an infinite horizon discounted cost criterion, an issue arises due to fact that depending on the information structure either the private or common information is growing over time which prevents numerical learning and leads to prohibitive memory constraints. Thus, compressing the growing information becomes of critical importance. In \cite{tavafoghi2018unified}, it is shown that that when a certain compression function exists for the private information, optimal policies which rely on such a compression are near optimal, however, it is not shown how to obtain such a compression function. Similarly, this result is extended in \cite{Subramanian-2021-common} where the compression of both common and private information is studied based on compression functions which satisfy certain conditions. However, again, it is not shown how such compression functions could be obtained. \cite{LiuZhang2025} also consider such implicit compression functions for the common information but under different assumptions. As noted in \cite{LiuZhang2025}, it is not clear in general how such compression functions could be obtained. 

For centralized POMDPs the near optimality of finite memory policies was established in \cite{kara2021CDCconvergence}\cite{kara2020near,kara2021convergence} (for related and somewhat more restrictive setting please see \cite{golowich2022planning}) and fore more recent studies \cite{demirciRefined2023,cayci2024finite}. For studies which allow for general information structures see \cite{LiuYouZhang2025,altabaa2024role}. In \cite{altabaa2024role}, a more general framework is presented in terms of partially observable sequential teams. In \cite{LiuYouZhang2025}, the information shared by the agents appear as a design parameter which the agents can optimize based on the common information at their disposal at any given time. 

\subsection{Statements of Main Results and Examples}\label{Main_cont_section} 

 For KSPISP and OSDISP problems two primary challenges arise due to the increasing common information and a need for Bayesian computation updates. While implicit conditions for the compression of common information are available notably in \cite{Subramanian-2021-common,LiuZhang2025}, building also on the approximate model analysis under uniform error bounds \cite{subramanian2022approximate}, the imposed conditions are typically too demanding (see Definition 1 \cite{Subramanian-2021-common} and Definition 7 \cite{LiuZhang2025}) and not explicit given a system: As noted by \cite{LiuZhang2025} ''it is in general unclear how to construct such an $\mathcal{M}$" where $\mathcal{M}$ refers to approximate models satisfying the implicit conditions. 
 
 We present (i) a more relaxed framework (where restrictive uniformity conditions in approximation error bounds are not imposed), (ii) consider general (possibly uncountable) spaces, and (iii) more importantly, we present explicit conditions by presenting and building on a weak Feller continuity analysis and explicit filter/predictor stability conditions. These allow for a more general and practical setting (see Figure \ref{information structures figure} for a summary of the results involving approximate optimality). In particular, in this paper, we address this by establishing near optimality of finite length sliding window of common information under a predictor stability condition which we also establish under easily computable conditions as will be demonstrated later through examples. In \cite{SaLiYuSpringer}, it was shown how one can approximate a general weak Feller Markov decision process (MDP) with a \textit{quantized} MDP which has finite state and action spaces (whose solutions are near optimal for the original model), and in \cite{KSYContQLearning} the rigorous convergence of a Q-learning algorithm for problems with standard Borel spaces to such a near optimal solution was established. Building on and generalizing these studies, we will present two Q-learning algorithms and establish their convergence to near optimal solutions. We then provide a further approximation result involving finite window of information. These, to our knowledge, are the first learning algorithms with explicit conditions and performance guarantees developed for KSPISP and OSDISP problems with general spaces. A major benefit of our analysis is that a near optimal policy can be obtained by interacting with the original environment from which the problem emanates; this contrasts with the approach previously taken in the literature which rests on the existence of oracles that provide simulations of an implicit and approximate model \cite{LiuYouZhang2025}.    

\begin{itemize}
 
\item[(i)] \textbf{Approximations} Exact sharing of information in KSPISP (and the equivalent belief-sharing information sharing pattern \cite{OoiWornell,YukTAC09}) is impractical for many applications. To this end, we provide a rigorous finite memory approximation method and result for any general $K$-step periodic belief sharing pattern using a sliding window of information, so that the agents only share a finite window of measurements and actions instead of their beliefs, and establish bounds for the loss in optimality due to such approximations via a predictor stability condition (see Theorems \ref{boundonperformance} and \ref{PredStabHilbert}). We then show that under technical conditions on the transition and measurements kernels, the predictor is exponentially stable under expected total variation (see Theorem \ref{predictor stability one-step delayed theorem}) and under expected Hilbert metric (see Theorem \ref{PredStabHilbert}). We thus obtain a rigorous approximation which is near optimal. Figure \ref{information structures figure} summarizes the systematic program leading to finite memory approximations. 

\item[(ii)] \textbf{Learning} We present an asynchronous Q-leaning algorithm (AQPQ), see Algorithm \ref{alg:AQPQ} and a synchronous Q-learning algorithm (SQPQ), see Algorithm \ref{alg:SQPQ}. We then establish their almost sure convergence to a near optimal solution under the $K$-step periodic information sharing pattern (see Theorem \ref{Convergence of Quantized Q-learning}). Algorithms  \ref{alg:AQPQ},\ref{alg:SQPQ} are based on sharing the predictor amongst agents and quantizing the latter. This requires computationally demanding Bayesian updates and require prior knowledge of the model. This motivates a third algorithm with sliding window periodic sharing pattern (QSWPSP), see Algorithm \ref{alg:QSWPSP}, which is a reinforcement learning algorithm that builds on our rigorous approximation results for the KSPISP and also converges to a near optimal solution. We, then, apply our Q-learning algorithms to two examples for which rigorous conditions for convergence to a near optimal solution are established (see Section \ref{K step numerics}  and Section \ref{sliding mem numerics}).  
\end{itemize}

\allowdisplaybreaks
\begin{figure}[p] \label{information structures figure}
\centering

\begin{minipage}{0.5\textwidth}
\centering
\text{(a)}
\vspace{0.2cm}

\scalebox{0.99}{%
\begin{tikzpicture}[>=Stealth, node distance=0.6cm]
\node[draw, minimum width=1.4cm, minimum height=1cm] (f1) {$\gamma_t^{1}$};
\draw[->] ([xshift=-2.5cm]f1.west) -- (f1.west)
    node[midway, above] {$I_t^{1}=(I^{C}_{t},I^{P^{1}}_{t})$};
\node[right=1.5cm of f1] (u1) {$u_t^{1}$};
\draw[->] (f1.east) -- (u1);

\node[draw, minimum width=1.4cm, minimum height=1cm, below=0.1cm of f1] (f2) {$\gamma_t^{2}$};
\draw[->] ([xshift=-2.5cm]f2.west) -- (f2.west)
    node[midway, above] {$I_t^{2}=(I^{C}_{t},I^{P^{2}}_{t})$};
\node[right=1.5cm of f2] (u2) {$u_t^{2}$};
\draw[->] (f2.east) -- (u2);

\node[below=0.01cm of f2] (dots) {$\vdots$};

\node[draw, minimum width=1.4cm, minimum height=1cm, below=0.5cm of dots] (fN) {$\gamma_t^{N}$};
\draw[->] ([xshift=-2.5cm]fN.west) -- (fN.west)
    node[midway, above] {$I_t^{N}=(I^{C}_{t},I^{P^{N}}_{t})$};
\node[right=1.5cm of fN] (uN) {$u_t^{N}$};
\draw[->] (fN.east) -- (uN);
\end{tikzpicture}%
}

\vspace{0.5cm}
\text{(c)}

\scalebox{0.7}{%
\begin{tikzpicture}[>=Stealth, node distance=0.6cm]

\node[align=center] at (-5,-2) (piq) {$\pi_{q-M}$,
\\ $\mathbf{y}_{[(q-M)K,qK]}$,
\\ $\mathbf{u}_{[(q-M)K,qK-1]}$};

\coordinate (fcol) at (3,0);

\node[draw, align=center, minimum width=1.4cm, minimum height=1cm,
      below=0.01cm of fcol] (f1b)
      {$f_{qK+r}^{1}:I^{P^{1}}_{[qK,qK+r]}\rightarrow u_{qK+r}^{1}$\\
       $r\in \{0,\cdots, K-1\}$};
\draw[->] (piq.east) -- (f1b.west);

\node[draw, align=center, minimum width=1.4cm, minimum height=1cm,
      below=1cm of f1b] (f2b)
      {$f_{qK+r}^{2}:I^{P^{2}}_{[qK,qK+r]}\rightarrow u_{qK+r}^{2}$\\
       $r\in \{0,\cdots, K-1\}$};
\draw[->] (piq.east) -- (f2b.west);

\node[below=0.1cm of f2b] (dots2) {$\vdots$};

\node[draw, align=center, minimum width=1.4cm, minimum height=2cm,
      below=1cm of f2b] (fNb)
      {$f_{qK+r}^{N}:I^{P^{N}}_{[qK,qK+r]}\rightarrow u_{qK+r}^{N}$\\
       $r\in \{0,\cdots, K-1\}$};
\draw[->] (piq.east) -- (fNb.west);

\end{tikzpicture}%
}

\end{minipage}
\hfill
\begin{minipage}{0.48\textwidth}
\centering

\text{(b)}
\vspace{0.2cm}

\scalebox{0.8}{%
\begin{tikzpicture}[>=Stealth, node distance=0.6cm]

\node at (-4,-2) (piq) {$\pi_{q}$};

\coordinate (fcol) at (0,0);

\node[draw, align=center, minimum width=1.4cm, minimum height=1cm,
      below=0.01cm of fcol] (f1b)
      {$f_{qK+r}^{1}:I^{P^{1}}_{[qK,qK+r]}\rightarrow u_{qK+r}^{1}$\\
        $r\in \{0,\cdots, K-1\}$};
\draw[->] (piq.east) -- (f1b.west);

\node[draw, align=center, minimum width=1.4cm, minimum height=1cm,
      below=1cm of f1b] (f2b)
      {$f_{qK+r}^{2}:I^{P^{2}}_{[qK,qK+r]}\rightarrow u_{qK+r}^{2}$\\
        $r\in \{0,\cdots, K-1\}$};
\draw[->] (piq.east) -- (f2b.west);

\node[below=0.1cm of f2b] (dots2) {$\vdots$};

\node[draw, align=center, minimum width=1.4cm, minimum height=1cm,
      below=1cm of f2b] (fNb)
      {$f_{qK+r}^{N}:I^{P^{N}}_{[qK,qK+r]}\rightarrow u_{qK+r}^{N}$\\
        $r\in \{0,\cdots, K-1\}$};
\draw[->] (piq.east) -- (fNb.west);

\end{tikzpicture}%
}

\vspace{0.2cm}
\text{(d)}
\vspace{0.2cm}

\scalebox{0.8}{%
\begin{tikzpicture}[>=Stealth, node distance=0.6cm]

\node[align=center] at (-1,-2) (piq)
  {$\hat{\pi}$,
  \\ $\mathbf{y}_{[(q-M)K,qK]}$,
  \\ $\mathbf{u}_{[(q-M)K,qK-1]}$};

\coordinate (fcol) at (3,0);

\node[draw, align=center, minimum width=1.4cm, minimum height=1cm,
      below=0.01cm of fcol] (f1b)
      {$f_{qK+r}^{1}:I^{P^{1}}_{[qK,qK+r]}\rightarrow u_{qK+r}^{1}$\\
        $r\in \{0,\cdots, K-1\}$};
\draw[->] (piq.east) -- (f1b.west);

\node[draw, align=center, minimum width=1.4cm, minimum height=1cm,
      below=1cm of f1b] (f2b)
      {$f_{qK+r}^{2}:I^{P^{2}}_{[qK,qK+r]}\rightarrow u_{qK+r}^{2}$\\
       $r\in \{0,\cdots, K-1\}$};
\draw[->] (piq.east) -- (f2b.west);

\node[below=0.1cm of f2b] (dots2) {$\vdots$};

\node[draw, align=center, minimum width=1.4cm, minimum height=1cm,
      below=1cm of f2b] (fNb)
      {$f_{qK+r}^{N}:I^{P^{N}}_{[qK,qK+r]}\rightarrow u_{qK+r}^{N}$\\
       $r\in \{0,\cdots, K-1\}$};
\draw[->] (piq.east) -- (fNb.west);

\end{tikzpicture}%
}

\end{minipage}

\caption{Equivalent formulations and approximation of KSPISP: Figure (a) corresponds to the original problem description where $I^{P^{i}}_{t}=\{y^{i}_{[0,t],u^{i}_{[0,t]}}\}$. Figure (b) shows the centralized reduction of KSPISP with $t=qK+r$ and $\pi_{q}(.)=P(x_{qK}\in.|I^{C}_{qK})\in \mathcal{P}(\mathbb{X})$. Figure (c) shows an equivalent formulation using the sliding window periodic sharing pattern (SWPSP), and Figure (d) shows a finite memory approximation of the latter towards near optimal design where $\hat{\pi}$ denotes some arbitrary but fixed probability measure on $\mathbb{X}$ used to replace $P(dx_{(q-M)K}\in.|I^{C}_{(q-M)K})$.}
\label{information structures figure}
\end{figure}

\subsection{Notation and Preliminaries}

\subsubsection{Notation}

   
   (i) For any standard Borel set $\mathbb{X}$ we denote the set of probabilities on $\mathbb{X}$ by $\mathcal{P}(\mathbb{X})$. (ii) We will denote a finite sequence $\{x_{t}\}_{t=l}^{t=k}$, where $k,l\in \mathbb{Z}^{+}$ by $x_{[l,k]}$ or $x^{[l,k]}$. (iii)  Similarly, for a double-indexed sequence $\{x_{t}^{i}\}_{\{t\in\{l,...,k\},i\in\{l',...,k'\}\}}$ we will use the notation $x^{[l',k']}_{[l,k]}$. (iv) Whenever the superscript of a double-indexed sequence ranges over all agents ($i\in \{1,...,N\}$) and when there is no ambiguity, we will use the notation $\mathbf{x}_{[l,k]}$ to refer to the double-indexed sequence $\{x_{t}^{i}\}_{\{t\in\{l,...,k\},i\in\{1,...,N\}\}}$. (v) Moreover, we will use the notation $\mathbb{U}=\mathbb{U}^{1}\times...\times\mathbb{U}^{N}$ and $\mathbb{Y}=\mathbb{Y}^{1}\times...\times\mathbb{Y}^{N}$. (vi) We will use $C_{b}(\mathbb{X})$ to denote the set of continuous and bounded functions on the space $\mathbb{X}$. (vii)  For a measurable function $f:(\mathbb{X},\mathcal{F}_{X})\rightarrow (\mathbb{U},\mathcal{B}(\mathbb{U}))$ where $\mathcal{F}_{X}$ is a $\sigma$-algebra on $\mathbb{X}$ and $\mathbb{U}$ is standard Borel, we will use the notation 
   \[ f(du|x): x \mapsto f(du|x) \in \mathcal{P}(\mathbb{U}),\]
   to refer to the conditional probability induced by $f$ on $\mathbb{U}$. In particular, for any $x\in \mathbb{X}$ $f(.|x)=\delta_{f(x)}(.)$ where $\delta_{f(x)}(.)$ is the Dirac measure concentrated at $f(x)$.

      Throughout this paper all probability valued spaces are endowed with the weak topology which we introduce below. Moreover, all product spaces are endowed with the product topology.

\subsubsection{Convergence of probability measures} \label{convergence of probability measures}
In this section, we will introduce three notions of convergence for sequences of probability measures that will be useful later on: weak convergence, convergence under total variation, and convergence under the Wasserstein metric of order one. For a complete, separable and metric space $\mathbb{X}$, a sequence $\left\{\mu_n \right\}_{n \in \mathbb{N}}$ $\subset$ $\mathcal{P}(\mathbb{X})$ is said to converge to $\mu \in \mathcal{P}(\mathbb{X})$ weakly if and only if $\int_{\mathbb{X}} f(x) \mu_n(d x) \rightarrow \int_{\mathbb{X}} f(x) \mu(d x)$ for every continuous and bounded $f: \mathbb{X} \rightarrow \mathbb{R}$. One important property of weak convergence is that the space of probability measures on a complete, separable, metric (Polish) space endowed with the topology of weak convergence is itself complete, separable, and metric. An example of such a metric on $\mathcal{P}(\mathbb{X})$ is the bounded Lipschitz metric, which is defined for $\mu, \nu \in \mathcal{P}(\mathbb{X})$ as
$$
\rho_{B L}(\mu, \nu):=\sup _{\|f \|_{BL\leq 1}}\left|\int f d \mu-\int f d \nu\right|
$$
where
$$
\|f\|_{B L}:=\|f\|_{\infty}+\sup _{x \neq y} \frac{|f(x)-f(y)|}{d(x, y)}
$$
and $\displaystyle \|f\|_{\infty}=\sup _{x \in \mathbb{X}}|f(x)|$. Here, $d(.,.)$ denotes the metric on $\mathbb{X}$. 

For probability measures $\mu, \nu \in \mathcal{P}(\mathbb{X})$, the total variation metric is given by
\begin{eqnarray*}
    &&\|\mu-\nu\|_{T V}=2 \sup _{B \in B(\mathbb{X})}|\mu(B)-\nu(B)|\\
    &&=\sup _{\|f\|_{\infty \leq 1}}\left|\int f(x) \mu(\mathrm{d} x)-\int f(x) \nu(\mathrm{d} x)\right|,
\end{eqnarray*}
\noindent where the supremum is taken over all measurable real-valued functions $f$ such that $\displaystyle \|f\|_{\infty}=\sup_{x \in \mathbb{X}}|f(x)| \leq$ 1. A sequence $\mu_{\mathrm{n}}$ is said to converge in total variation to $\mu \in \mathcal{P}(\mathbb{X})$ iff $\| \mu_n-\mu\|_{T V} \rightarrow 0$.
The Wasserstein metric of order one is given by
\begin{equation*}
    W_{1}(\mu,\nu)=\sup_{\|f\|_{\text{Lip}}\leq 1}\bigg| \int f d\mu - \int f d\nu   \bigg| 
\end{equation*}
 where $\|f\|_{\text{Lip}}:=\sup _{x \neq y} \frac{|f(x)-f(y)|}{d(x, y)}$

We now provide several examples that demonstrate the continuity of a transition kernel under various metrics. This will be useful later on as the continuity of the measurement channels $Q^{i}$ in $x$, and the continuity of $\mathcal{T}$ in $x,u$ will be needed for the continuity of the transition kernel that results from a centralized reduction \cite{OMZSY-centralized-reduction}. We note that when the space $\mathbb{X}$ is finite, continuity of  of $\mathcal{T}$ in $x,u$ under one of the metrics mentioned above implies continuity in the other as they are all equivalent in the sense that they generate the same topology. For centralized Markov decision processes various examples leading to different regularity conditions are presented in \cite{KSYContQLearning}. Recall (\ref{Prob 1}).

\begin{example} Suppose the realization function $\Phi(x,u^{1},\cdots,u^{N},v)$ is continuous in $(x,u^{1},\cdots,u^{N})$ for every $v$. Then, the kernel $\mathcal{T}$ is weak-Feller (weakly continuous), i.e., for any sequence $\{(x_{n},u^{1}_{n},\cdots,u^{N}_{n})\}_{n\in \mathbb{N}}$ such that $(x_{n},u^{1}_{n},\cdots,u^{N}_{n})\rightarrow (x,u^{1},\cdots,u^{N})$ as $n\rightarrow \infty$, we have that for any continuous and bounded function $g\in \mathbf{C_{b}}(\mathbb{X})$ $\int g(x_{1})\mathcal{T}(dx_{1}|x_{n},u^{1}_{n},\cdots,u^{N}_{n})\rightarrow \int g(x_{1})\mathcal{T}(dx_{1}|x,u^{1},\cdots,u^{N})$ as $n\rightarrow \infty$. To see this, let $g\in \mathbf{C_{b}}(\mathbb{X})$. Then, by an application of the dominated convergence theorem (DCT), we have that
\begin{eqnarray*}
    &&\int g(x_{1})\mathcal{T}(dx_{1}|x_{n},u^{1}_{n},\cdots,u^{N}_{n})\\
    && \quad =\int g(\Phi(x_{n},u^{1}_{n},\cdots,u^{N}_{n},v))P(dv)\\
    &&\rightarrow\int g(\Phi(x,u^{1},\cdots,u^{N},v))P(dv)\\
    &&\quad =\int g(x_{1})\mathcal{T}(dx_{1}|x,u^{1},\cdots,u^{N})
\end{eqnarray*}
Thus, this model satisfies the continuity condition on the kernel as needed for Theorem \ref{theorem5}. 
\end{example}

\begin{example} Suppose the kernel $\mathcal{T}$ admits a density $\tilde{\Phi}$ with respect to some measure $\xi\in \mathcal{P}(\mathbb{X})$ so that $\mathcal{T}(x_{1}\in\cdot|x,u^{1},\cdots,u^{N})=\int_{\cdot}\tilde{\Phi}(x_{1},x,u^{1},\cdots,u^{N})\xi(dx_{1})$. If $\tilde{\Phi}$ is continuous in $(x,u^{1},\cdots,u^{N})$, then $\mathcal{T}$ is continuous in total variation, since for any sequence $\{(x_{n},u^{1}_{n},\cdots,u^{N}_{n})\}_{n\in \mathbb{N}}$ such that $(x_{n},u^{1}_{n},\cdots,u^{N}_{n})\rightarrow (x,u^{1},\cdots,u^{N})$ as $n\rightarrow \infty$, we have that 
\begin{eqnarray*}
     &&\sup_{\|g\|_{\infty}\leq 1}\bigg|\int g(x_{1})\mathcal{T}(dx_{1}|x_{n},u^{1}_{n},\cdots,u^{N}_{n})\\
     && \quad \quad \quad \quad -\int g(x_{1})\mathcal{T}(dx_{1}|x,u^{1},\cdots,u^{N})\bigg|\rightarrow 0
\end{eqnarray*}
as $n\rightarrow \infty$. This follows from an application of Scheff\'e Lemma \cite[Theorem 16.12]{Bil95}. 
This then serves as an example for the conditions on the kernels for both Theorems \ref{theorem5} and \ref{theorem6}.
\end{example}

\begin{example} Suppose the realization function $\Phi$ can be written as $\Phi(x,u^{1},\cdots,u^{N},v)=\tilde{\Phi}(x,u^{1},\cdots,u^{N})+v$ where $\tilde{\Phi}$ is continuous in $(x,u^{1},\cdots,u^{N})$ and $v$ admits a continuous density function $\iota$ with respect to some measure $\xi\in \mathcal{P}([0,1])$. Then, $\mathcal{T}$ is continuous in total variation. This then serves as an example for the conditions on the kernels for both Theorems \ref{theorem5} and \ref{theorem6}. Similarly, one can construct examples for which regularity of the observation channels hold.
\end{example}

\section{Supporting Results: Centralized Reductions, Weak-Feller Regularity, and Existence} \label{supporting-results}

In this section we review and introduce several key supporting results.
\subsection{One-step delayed information sharing pattern}
Let the predictor at time $t$ be given by $Z_{t}(\cdot)=P(x_{t} \in \cdot |y_{[0,t-1]}^{[1,N]},u_{[0,t-1]}^{[1,N]})$ and
$f^{i}_{t}:y^{i}_{t} \mapsto u_{t}^{i},$ be $\mathcal{B}(Y^{i})$-measurable and represent the {\it action} of Agent $i$ at time $t$. Let $\Bar{\Gamma}^{i}_{t}$ denote the space where $f^{i}_{t}$ takes values.

 \begin{theorem} [Theorem 4.5 \cite{OMZSY-centralized-reduction}] \label{them2young}
  Suppose the team policy at time $t$ is given by $(\Tilde{\gamma}_{t}^{1},...,\Tilde{\gamma}_{t}^{N})$ such that for all $i$, $\Tilde{\gamma}^{i}_{t}:\mathcal{P}(\mathbb{X})\ni Z_{t} \mapsto f^{i}_{t}\in \Bar{\Gamma}^{i}_{t}$ is $\mathcal{B}(\mathcal{P}(\mathbb{X}))$-measurable. Then, $( Z_{t} ,f_{t})$, where $f_{t}=(f^{1}_{t},...,f^{N}_{t})$,  forms a fully observed MDP and is equivalent to the one-step delayed information sharing pattern problem. 
  \end{theorem}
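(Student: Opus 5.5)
We would prove Theorem \ref{them2young} in two parts: (a) the pair $(Z_t,f_t)$ is a controlled Markov chain with a time-invariant kernel and a cost that depends only on $(Z_t,f_t)$; (b) restricting to policies of the stated form loses no optimality relative to $\Gamma$. Before either part, we note a fact specific to OSDISP. Since $I^{C}_{t}$ contains $y^{i}_{[0,t-1]}$ and $u^{i}_{[0,t-1]}$, Agent $i$'s private information beyond the common information is just $y^{i}_{t}$. Hence any admissible $\gamma^{i}_{t}(I^{C}_{t},I^{i}_{t})$ can be written as $\gamma^{i}_{t}(I^{C}_{t},y^{i}_{t})$. For fixed $I^{C}_{t}$, the section $y^{i}_{t}\mapsto\gamma^{i}_{t}(I^{C}_{t},y^{i}_{t})$ is then a measurable element $f^{i}_{t}\in\bar\Gamma^{i}_{t}$. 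We give $\bar\Gamma^{i}$ the topology of Young measures, viewing $f^{i}$ as the kernel $f^{i}(du|y)$, so that it is standard Borel and the map $I^{C}_{t}\mapsto f^{i}_{t}$ is measurable.

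\textbf{Step 1 (predictor recursion).} Fix realized prescriptions $f_t$. Conditional on $I^{C}_{t}$, the pair $(x_t,\mathbf y_t)$ has law $Z_t(dx)\prod_i Q^i(dy^i|x)$, and $\mathbf u_t=f_t(\mathbf y_t)$. Write $I^{C}_{t+1}=(I^{C}_{t},\mathbf y_t,\mathbf u_t)$. Bayes' rule then gives
\[
Z_{t+1}(\cdot)=\frac{\int \mathcal T(\cdot|x,\mathbf u_t)\prod_i Q^i(dy^i_t|x)\,Z_t(dx)}{\int\prod_i Q^i(dy^i_t|x)\,Z_t(dx)},
\]
understood as a regular conditional probability (disintegration) of the joint measure on $\mathbb X\times\mathbb Y$. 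This gives a measurable update $Z_{t+1}=F(Z_t,f_t,\mathbf y_t)$, where $\mathbf u_t$ is determined by $\mathbf y_t$ and $f_t$. Moreover, the conditional law of $\mathbf y_t$ given $(I^{C}_{t},f_t)$ is $\int\prod_iQ^i(\cdot|x)Z_t(dx)$, which depends only on $(Z_t,f_t)$. Consequently $P(Z_{t+1}\in\cdot\,|\,Z_{[0,t]},f_{[0,t]})=\eta(\cdot|Z_t,f_t)$ for a kernel $\eta$ that does not depend on $t$.

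\textbf{Step 2 (cost).} By iterated expectations,
\[
E[c(x_t,\mathbf u_t)|I^{C}_{t}]=\int\int c(x,f_t(\mathbf y))\prod_iQ^i(dy^i|x)Z_t(dx)=:\tilde c(Z_t,f_t),
\]
which is bounded. Together with Step 1, this shows that $(Z_t,f_t)$ is an MDP on $\mathcal P(\mathbb X)\times\prod_i\bar\Gamma^i$ with cost $\tilde c$, and that $J(\gamma)=E[\sum_t\beta^t\tilde c(Z_t,f_t)]$ holds for every $\gamma\in\Gamma$.

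\textbf{Step 3 (equivalence).} Every team policy induces a policy of the MDP in which $f_t$ is a measurable function of $I^{C}_{t}$, i.e.\ a history-dependent policy of the MDP, and the two costs agree. Conversely, every Markov policy $Z_t\mapsto f_t$ is realizable in the team problem, because $Z_t$ is a measurable function of $I^{C}_{t}$ computed recursively from $\mu$. Standard MDP theory says Markov policies are as good as history-dependent ones for discounted bounded costs; we invoke it, or the measurable selection setting of \cite[Chapter 3]{HernandezLermaMCP}. With it, the infima over the two classes coincide.

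\textbf{Main obstacle.} The expected difficulty is measurability. We must show that $\bar\Gamma^i$, with the Young-measure topology, is a standard Borel space on which the evaluation $(f,y)\mapsto f(y)$ is measurable, and that the Bayes update $F$ is jointly Borel in $(Z,f,\mathbf y)$. We would first address these by passing to kernels $f(du|y)$ on $\mathbb Y^i\times\mathbb U^i$. The needed measurability then follows from measurable dependence of disintegrations on the parameters for standard Borel spaces.
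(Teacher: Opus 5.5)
The paper gives no proof of this statement; it imports it from \cite{OMZSY-centralized-reduction}. Your Steps 1--2 (the Bayes recursion for $Z_t$, the kernel $\eta$, the cost $\tilde c$, and the Young topology on prescriptions) are the same common-information reduction the paper records right after the statement. The gap is in Step 3, in the direction showing that no team policy beats the MDP optimum. A prescription $f_t=\gamma_t(I^C_t,\cdot)$ is \emph{not} a history-dependent policy of the MDP. The reason is that $I^C_t=(\mathbf y_{[0,t-1]},\mathbf u_{[0,t-1]})$ is in general strictly finer than the MDP history $(Z_{[0,t]},f_{[0,t-1]})$, because different raw data can produce the same predictors and prescriptions. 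So the comparison of history-dependent and Markov MDP policies you invoke does not apply to it directly. The missing step uses something your Step 1 already shows but does not record: $P(Z_{t+1}\in\cdot\mid I^C_t)=\eta(\cdot\mid Z_t,f_t)$, i.e., the controlled Markov property holds for the larger filtration $\sigma(I^C_t)$. Let $\kappa_t$ be a regular conditional law of $f_t$ given $(Z_{[0,t]},f_{[0,t-1]})$. By the tower property and Ionescu-Tulcea, $(Z_t,f_t)_{t\ge 0}$ then has the same law under $\gamma$ as under the \emph{randomized} history-dependent MDP policy $\kappa=(\kappa_t)$. Hence $J(\gamma)$ is an MDP cost and is bounded below by the MDP value at $\mu$.

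Second, to pass from there to deterministic Markov policies $Z_t\mapsto f_t$, you cannot appeal to the measurable-selection setting of \cite[Chapter 3]{HernandezLermaMCP}. That setting needs continuity and compactness hypotheses (those of Theorem~\ref{theorem5}, which is how the paper obtains Corollary~\ref{existence OSDISP}), and the present statement does not assume them. Use instead a result valid for arbitrary Borel models with bounded cost and discounting. One such result is Blackwell's: for every $\epsilon>0$ and every initial distribution $p$ there is a deterministic stationary Borel policy that is $\epsilon$-optimal for $p$-almost every initial state. With $p=\delta_\mu$ this gives equality of the two infima. It does not give existence of an optimal policy, which genuinely needs the regularity. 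For the realizability direction, two points should be made explicit. First, $Q^i(\cdot\mid x)\ll\psi$ for all $x$ is what makes the $\psi$-a.e.\ identification in the Young topology harmless. Second, you need a jointly Borel representative $(P,y^i)\mapsto f(y^i)$ on $\Theta^i\times\mathbb Y^i$, so that $\tilde\gamma^i_t(Z_t(I^C_t))$ becomes a Borel function of $(I^C_t,y^i_t)$; your parametric-disintegration remark is the right tool for this. With the density $h$ assumed in the paper, the update $F$ in (\ref{equation for F}) is explicit, so no disintegration is needed for the measurability of $F$ itself.
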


\noindent{\bf Young topology on Maps as Actions.} Here, we will introduce Young topology on local control policies (see, e.g. \cite{BorkarRealization,yuksel2023borkar}) which map private information to actions. That is to say, we will introduce a topology on $\Bar{\Gamma}^{i}_{t}$. Suppose there exists a reference measure $\psi$ and a measurable function $\displaystyle h^{i}: \mathbb{X} \times  \mathbb{Y}^{i}\rightarrow [0,\infty)$ such that the measurement channel, of each individual agent $i$, $Q^{i}$ satisfies:
\begin{equation} \label{Young top}
    Q^{i}(dy^{i}_{t}|x_{t})=\int \psi(dy^{i}_{t})h^{i}(x_{t},y^{i}_{t})
\end{equation}
For the sake of readability, throughout the rest of the paper, we will assume that $h^{i}\equiv h$ for all $i \in \{1,...,N\} $ where $h$ is a measurable function such that $\displaystyle h: \mathbb{X} \times  \mathbb{Y}^{i}\rightarrow \mathbb{R}$. As noted in \cite{OMZSY-centralized-reduction} the more general case proceeds in a similar manner.
\begin{definition} \label{topology for actions onestep} For all $i \in \{1,...,N\}$ let $\{f^{i}_{n,t}\}$ be a sequence of measurable functions such that $f^{i}_{n,t}:\mathbb{Y}^{i} \rightarrow \mathbb{U}^{i}$ and let $f^{i}_{t}:\mathbb{Y}^{i} \rightarrow \mathbb{U}^{i}$ be a measurable function. We say
$f_{n,t}=(f^{1}_{n,t},...,f^{N}_{n,t})\rightarrow f^{i}_{t}=(f^{1}_{t},...,f^{N}_{t})$ under the Young topology if and only if for all $i \in \{1,...,N\}$ and for all $g \in C_{b}(\mathbb{Y}^{i},\mathbb{U}^{i})$ : 
\begin{eqnarray*}
  \int g(y^{i},u^{i})f^{i}_{n,t}(du^{i}|y^{i})\psi(dy^{i})\rightarrow \int g(y^{i},u^{i})f^{i}_{t}(du^{i}|y^{i})\psi(dy^{i}).  
\end{eqnarray*}

\end{definition}
\begin{remark}
    It follows that, since the marginal on the measurement space is fixed, the convergence in Definition \ref{topology for actions onestep} is equivalent to the requirement that
    $\int g(y^{i},u^{i})f^{i}_{n,t}(du^{i}|y^{i})\psi(dy^{i})\rightarrow \int g(y^{i},u^{i})f^{i}_{t}(du^{i}|y^{i})\psi(dy^{i})$ for any measurable function $g$ which is bounded and continuous in $y^{i}$ (\cite[Theorem 3.10]{Schal},  \cite[Theorem 2.5]{balder2001}).
\end{remark}

This topology leads to a convex and compact formulation (see Section 2 in \cite{BorkarRealization} as well as Section 3 in \cite{Bor91}). In particular, consider the set of probability measures on $\mathbb{Y}^{i} \times \mathbb{U}^{i}$ with fixed marginal $\psi\in \mathcal{P}(\mathbb{Y}^{i})$: 
\begin{eqnarray*}
    E^{i}&=&\big\{P\in \mathcal{P}(\mathbb{Y}^{i} \times \mathbb{U}^{i}) \mid P(B)=\int_{B} f(du^{i}|y^{i})\psi(dy^{i}),\\
    &&\text{ for some } f\in \mathcal{F}^{i} \text{ and for all } B\in\mathcal{B}(\mathbb{Y}^{i} \times \mathbb{U}^{i})  \big \}
\end{eqnarray*}
where $\mathcal{F}^{i}$ is the space of regular conditional probability measures from $\mathbb{Y}^{i}$ to $\mathbb{U}^{i}$ (see \cite[section 10.2]{Dud02} or \cite[Appendix 4]{dynkin1979controlled1}): 
\begin{eqnarray*}
    \mathcal{F}^{i}=\big\{f:(\mathbb{Y}^{i},\mathcal{B}(\mathbb{Y}^{i})) \rightarrow (\mathcal{P}(\mathbb{U}^{i}),\mathcal{B}(\mathcal{P}(\mathbb{U}^{i})))| \text{ }f \text{ is measurable}  \big \}
\end{eqnarray*}
Recall that $\mathcal{P}(\mathbb{U}^{i})$ is endowed with the weak topology. Then, every deterministic map $f^{i}: \mathbb{Y}^{i} \rightarrow \mathbb{U}^{i}$ can be identified as an element of the set of extreme points of $E^{i}$ \cite[Lemma 2.2]{BorkarRealization}:
\begin{eqnarray*}
   \nonumber \Theta^{i}&=&\big\{P\in \mathcal{P}(\mathbb{Y}^{i} \times \mathbb{U}^{i}) \mid P(B)=\int \mathbbm{1}_{\{(y^{i},g(y^{i}))\in B\}}\psi(dy^{i}),\\
   && \text{ for \, some \, measurable \,} \text{function } g:\mathbb{Y}^{i}\rightarrow \mathbb{U}^{i}\\
   &&\text{ and for all } B\in\mathcal{B}(\mathbb{Y}^{i} \times \mathbb{U}^{i})  \big \}
\end{eqnarray*}
Thus, $\Theta^{i}$ inherits the Borel measurability and topological properties
of the Borel measurable set $E^{i}$ \cite[Section 2]{BorkarRealization}. 

For the MDP  $( Z_{t} ,f_{t})$ the kernel is given by 
\begin{eqnarray*}
    &&\eta(Z_{t+1} \in \cdot | Z_{t}, f_{t}^{[1,N]}):=P(Z_{t+1} \in \cdot |Z_{t},\mathbf{f}_{t})\\
&&=\int \mathbbm{1}_{ \{ F(Z_t,\mathbf{u}_{t},y_{t}^{[1,N]}) \in \cdot \}} Z_{t}(dx_{t})\prod\limits_{i=1}^{N} Q^{i}(dy_{t}^{i}|x_{t})f^{i}_{t}(du^{i}_{t}|y^{i}_{t})
\end{eqnarray*}
where, 
\begin{eqnarray} \label{equation for F}
      && F(Z_{t},\mathbf{u}_{t},y_{t}^{[1,N]})(\cdot):=Z_{t+1}(\cdot)\\
       \nonumber &&= \int \int_{\cdot}  \frac{\mathcal{T}(dx_{t+1}|x_{t}, u_{t}^{[1,N]})Z_{t}(dx_{t})\prod\limits_{i=1}^{N} h(x_{t},y_{t}^{i})}{\int_{\mathbb{X}}\prod\limits_{i=1}^{N} h(x_{t},y_{t}^{i})Z_{t}(dx_{t})}
\end{eqnarray}
The cost is $
    J(\tilde{\gamma})=\sum_{t=0}^{\infty} \beta^{t} E^{\tilde{\gamma}}[\tilde{c}({Z}_{t},f^{1}_{t},...,f^{N}_{t})]$,  
where \begin{eqnarray}
    &&\tilde{c}({Z}_{t},f^{1}_{t},...,f^{N}_{t})=\\
    \nonumber &&\int Z_{t} (dx_{t})\prod\limits_{i=1}^{N}f^{i}_{t}(du^{i}_{t}|y^{i}_{t})Q^{i}(dy^{i}_{t}|x_{t})c(x_{t},u^{1}_{t},...,u^{N}_{t}).
\end{eqnarray} 
We next present a theorem on the weak continuity of the kernel $\eta$ that will be crucial for establishing the convergence of the quantized Q-learning algorithm which will be introduced in Section \ref{LearningQSectionInf}.
\begin{theorem} [Theorem 5.1 \cite{OMZSY-centralized-reduction}]\label{theorem5}
Suppose the transition kernel $\mathcal{T}(x_{t+1}\in \cdot |x_{t}=x,\mathbf{u}_{t}=\mathbf{u})$ is weak-Feller, i.e., weakly continuous in $x$ and $u$ that is to say for any $(x_{n},\mathbf{u}_{n})\rightarrow (x,\mathbf{u})$ and any $g\in \mathbf{C_{b}}(\mathbb{X})$: $\int g(x)\mathcal{T}(dx|x_{n},\mathbf{u}_{n})\rightarrow \int g(x)\mathcal{T}(dx|x,\mathbf{u})$. Additionally suppose $h(x,y)$ is continuous in $x$ (for all $y$: $h(.,y): \mathbb{X} \rightarrow \mathbb{R}$ is a continuous function) and suppose that $\mathbb{X}$ is compact. Then, the MDP $(Z_{t},f_{t})$ introduced in Theorem \ref{them2young} is weak-Feller.
\end{theorem}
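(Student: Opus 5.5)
To prove weak-Feller continuity of $\eta$, I fix a sequence $(Z^n,\mathbf{f}_n)\to(Z,\mathbf{f})$, where $Z^n\to Z$ weakly in $\mathcal{P}(\mathbb{X})$ and $f^i_n\to f^i$ in the Young topology of Definition \ref{topology for actions onestep}. I then show that $\int \Psi(z')\,\eta(dz'|Z^n,\mathbf{f}_n)\to\int\Psi(z')\,\eta(dz'|Z,\mathbf{f})$ for every $\Psi\in C_b(\mathcal{P}(\mathbb{X}))$. Since $\mathbb{X}$ is compact, $\mathcal{P}(\mathbb{X})$ is compact, and $C_b(\mathcal{P}(\mathbb{X}))$ has a convenient convergence-determining class. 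I would use the standard reduction (as in the centralized POMDP weak-Feller proofs of Feinberg--Kasyanov--Zgurovsky and Kara--Saldi--Y\"uksel). Write $P^n(dx,d\mathbf{y},d\mathbf{u})=Z^n(dx)\prod_i h(x,y^i)\psi(dy^i)f^i_n(du^i|y^i)$. It then suffices to show two things. First, the joint law of $(\mathbf{y},\mathbf{u})$ under $P^n$ converges to that under $P$ in a suitably strong sense. Second, the filter map $F(Z^n,\mathbf{u},\mathbf{y})$ converges to $F(Z,\mathbf{u},\mathbf{y})$ in an averaged (in probability) sense.

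\textbf{Step 1 (joint measures).} Let $g(x,\mathbf{y},\mathbf{u})$ be bounded, continuous in $(x,\mathbf{u})$ and measurable in $\mathbf{y}$. I will show that $\int g\,dP^n\to\int g\,dP$. I integrate out $x$ first: define $G_n(\mathbf{y},\mathbf{u})=\int g(x,\mathbf{y},\mathbf{u})\prod_i h(x,y^i)Z^n(dx)$. Because $h(\cdot,y)$ is continuous on compact $\mathbb{X}$, $G_n\to G$ pointwise. The $u$-dependence is equicontinuous via compactness, giving local uniform convergence in $\mathbf{u}$. A dominating bound on $\prod_i h(x,y^i)$ in $L^1(\psi^{\otimes N})$ is needed; I would get it from $\sup_{x}h(x,y)$ being $\psi$-integrable, or via a truncation argument. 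The integral against $\prod_i f^i_n(du^i|y^i)\psi(dy^i)$ is then handled one agent at a time. For each agent I use the Remark following Definition \ref{topology for actions onestep}: Young convergence extends to test functions that are merely measurable in $y$ and continuous in $u$. Products across agents are treated by a telescoping argument, since the agents' policies act on separate coordinates against a product reference measure.

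\textbf{Step 2 (filter continuity).} Write $F(Z,\mathbf{u},\mathbf{y})$ as the ratio $\int\mathcal{T}(\cdot|x,\mathbf{u})\prod_i h(x,y^i)Z(dx)\big/\int\prod_i h(x,y^i)Z(dx)$. For fixed $\mathbf{y}$ with a positive denominator, both numerator and denominator converge as $(Z^n,\mathbf{u}_n)\to(Z,\mathbf{u})$. This uses weak continuity of $\mathcal{T}$ in $(x,\mathbf{u})$, continuity of $h$ in $x$, and compactness of $\mathbb{X}$; together these give uniform convergence of $x\mapsto\int\phi\,d\mathcal{T}(\cdot|x,\mathbf{u}_n)$ for $\phi\in C_b(\mathbb{X})$. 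Hence $F$ is jointly continuous in $(Z,\mathbf{u})$ for each $\mathbf{y}$ outside a null set. The quantity to control is then
\[\int \Psi(F(Z^n,\mathbf{u},\mathbf{y}))\,dP^n-\int\Psi(F(Z,\mathbf{u},\mathbf{y}))\,dP.\]
I split it into $\int[\Psi(F(Z^n,\cdot))-\Psi(F(Z,\cdot))]dP^n$ plus $\int\Psi(F(Z,\cdot))d(P^n-P)$. The second term is handled by Step 1, since $\Psi\circ F(Z,\cdot,\cdot)$ is continuous in $\mathbf{u}$ and measurable in $\mathbf{y}$. For the first term, I use that $P^n$'s $\mathbf{y}$-marginal has densities $\int\prod_i h\,dZ^n$ converging pointwise. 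The $\mathbf{u}$-marginals are tight, since I would assume $\mathbb{U}^i$ compact or else extract tightness from Young convergence. On a compact $\mathbf{u}$-set, the continuity of $F$ in $(Z,\mathbf{u})$ upgrades to uniform convergence in $\mathbf{u}$ via Dini/Arzelà-type arguments, and dominated convergence in $\mathbf{y}$ then finishes.

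\textbf{Main obstacle.} I expect the main difficulty to be the first term of Step 2. It requires convergence of $\Psi(F(Z^n,\mathbf{u},\mathbf{y}))$ uniformly in $\mathbf{u}$ while the action measures $f_n$ themselves vary. My first attempt is the generalized dominated convergence/Serfozo-type result (as in Langen's lemma): the integrands converge continuously in $\mathbf{u}$ for fixed $\mathbf{y}$, and the measures converge in the Young (setwise-in-$y$, weak-in-$u$) sense. If uniformity fails, the fallback is to replace $\Psi$ by the dense class of functions $\Psi(z)=\prod_{k}\int\phi_k\,dz$ with $\phi_k\in C(\mathbb{X})$. These functions are polynomial in finitely many integrals and therefore reduce everything to Step 1 applied to explicit integrands. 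This is enough because they form an algebra separating points of the compact space $\mathcal{P}(\mathbb{X})$, so Stone--Weierstrass makes them dense.
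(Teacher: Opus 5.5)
Your proposal is correct and is essentially the argument behind this result. The paper only cites the theorem, but its Appendix proof of Proposition~\ref{prop tilde theta WF} follows the same pattern: continuity of the update $F(\cdot,\cdot,\mathbf{y})$ in $(Z,\mathbf{u})$ for $P$-a.e.\ $\mathbf{y}$, Young/ws convergence of the joint $(\mathbf{y},\mathbf{u})$ law, and a generalized dominated convergence step. You do not need integrability of $\sup_x h(x,y)$: since $\alpha_{Z^n}(\mathbf{y})=\int\prod_i h(x,y^i)Z^n(dx)\to\alpha_Z(\mathbf{y})$ pointwise and all integrate to one against $\psi^{\otimes N}$, Scheff\'e gives $L^1$ convergence, which supplies both the uniform integrability for your tightness claim and the dominated-convergence step in $\mathbf{y}$, so the obstacle you flag is already handled by your Step 2.
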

Theorem \ref{theorem5} implies the following existence result.
\begin{corollary} \label{existence OSDISP}
    Suppose the transition kernel $\mathcal{T}(x_{t+1}\in \cdot |x_{t}=x,\mathbf{u}_{t}=\mathbf{u})$ is weakly continuous in $x$ and $\mathbf{u}$. Additionally suppose $h(x,y)$ is continuous in $x$ and that $\mathbb{X}$ is compact. Then, an optimal solution to the problem (and equivalently to the centralized reduction) of the OSDISP problem (\ref{Prob 1}),(\ref{obs 1}),(\ref{cost}) exists and stationary policies are optimal.
\end{corollary}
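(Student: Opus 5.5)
The plan is to reduce Corollary \ref{existence OSDISP} to the standard existence theory for discounted weak-Feller MDPs with compact action sets, applied to the centralized reduction $(Z_t,f_t)$ of Theorem \ref{them2young}. By that theorem, the OSDISP problem (\ref{Prob 1}),(\ref{obs 1}),(\ref{cost}) is equivalent to the fully observed MDP with state space $\mathcal{P}(\mathbb{X})$, action space $\prod_{i}\Theta^{i}$ (deterministic local maps $y^i\mapsto u^i$ under the Young topology), kernel $\eta$ and stage cost $\tilde c$. An optimal (stationary) policy for this MDP therefore yields an optimal team policy for the original problem. Measurability is guaranteed by Theorem \ref{them2young}: the policy is a measurable map $Z_t\mapsto f_t$, and $u_t^i=f_t^i(y_t^i)$.

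I would verify the hypotheses of the measurable selection theorem \cite[Chapter 3]{HernandezLermaMCP} for discounted cost in four steps.
\begin{enumerate}
\item[(a)] The state space is well behaved. Since $\mathbb{X}$ is compact, $\mathcal{P}(\mathbb{X})$ is compact metric under the weak topology.
\item[(b)] The action sets are compact. $E^i$ is compact in the Young topology \cite{BorkarRealization,Bor91}, with fixed marginal $\psi$. This requires $\mathbb{U}^i$ to be compact, or a tightness argument; I would assume compact action spaces here, as is implicit in \cite{OMZSY-centralized-reduction}.
\item[(c)] The kernel $\eta$ is weak-Feller on $\mathcal{P}(\mathbb{X})\times\prod_i E^i$. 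This is exactly Theorem \ref{theorem5}.
\item[(d)] The stage cost $\tilde c$ is continuous and bounded. Boundedness is immediate. For continuity, take $Z_n\to Z$ weakly and $f_n\to f$ in the Young topology. Write $\tilde c$ as $\int \big[\int Z(dx)\prod_i h(x,y^i)c(x,\mathbf u)\big]\prod_i f^i(du^i|y^i)\psi(dy^i)$. The inner integral is continuous in $\mathbf u$, jointly in $(Z,\mathbf u)$ on the compact set $\mathbb X$, since $h(\cdot,y)$ and $c$ are continuous. Convergence then follows from the remark after Definition \ref{topology for actions onestep}, which allows test functions that are only measurable in $y$, combined with a product/generalized dominated convergence argument.
\end{enumerate}

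Given (a)–(d), the discounted Bellman operator $T v(z)=\min_{f}\{\tilde c(z,f)+\beta\int v\,d\eta(\cdot|z,f)\}$ maps $C_b(\mathcal{P}(\mathbb{X}))$ into itself. Continuity of $v$ is preserved by weak-Feller plus continuity of $\tilde c$. The minimum over a compact action set is attained, and the minimized value is continuous by Berge's maximum theorem, since the action set does not depend on the state. $T$ is a $\beta$-contraction in the sup norm, so it has a unique fixed point $v^*$. A measurable minimizing selector $z\mapsto f^*(z)$ exists, and on the extreme points it can be taken to be deterministic: a minimizer over $E^i$ can be replaced by an element of $\Theta^i$, because the objective is linear in each $f^i$ separately (multilinear overall), so the minimum is achieved at extreme points. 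The stationary policy $f^*$ is then optimal by the standard verification argument.

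The main obstacle is the deterministic-selection step together with the continuity of $\tilde c$ jointly in $(Z,f)$. $\Theta^i$ is not closed in general, so I would minimize over the compact convex set $E^i$ and then argue via multilinearity and Krein–Milman/Bauer's maximum principle that some minimizer lies in $\Theta^i$. This also has to be done measurably in $z$, which I would handle agent-by-agent.
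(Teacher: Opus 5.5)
Your proposal is correct and takes the same route as the paper. The paper obtains the corollary in one line: it combines the centralized reduction (Theorem \ref{them2young}) with the weak-Feller property of $\eta$ (Theorem \ref{theorem5}) and the standard measurable-selection theory for discounted MDPs \cite[Chapter 3]{HernandezLermaMCP}. Your additional steps make explicit what the paper leaves implicit: compactness of the Young-measure action sets, which genuinely requires compact $\mathbb{U}^{i}$ (a hypothesis the statement leaves tacit); continuity of $\tilde c$; and the multilinearity/extreme-point argument that recovers deterministic local maps from a minimizer over $\prod_i E^{i}$.
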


\subsection{$K$-step periodic information sharing pattern} \label{K periodic section}

For problem (\ref{Prob 1},\ref{obs 1}), let $t=qK+r$ where $q$ and $r$ are non-negative integers such that $r\leq K-1$ and $K\in \mathbb{N}$ represent the period between successive information sharing between the Agents. Define
\begin{align} \label{k-step info}
   \nonumber  I^{i}_{t}&=\{y^{i}_{[qK,qK+r]},u^{i}_{[qK,qK+r-1]},I^{C}_{t}\} \\
     I^{C}_{t}&=\{y^{[1,N]}_{[0,qK-1]},u^{[1,N]}_{[0,qK-1]}\} \\
   \nonumber  I^{P^{i}}_{t}&= \{y^{i}_{[qK,qK+r]},u^{i}_{[qK,qK+r-1]}\} 
\end{align} 
Suppose that each Agent $i$ has access to $I^{i}_{t}$; in the above, $I^{P^{i}}$ denotes the private information and $I^{C}$ denotes the common information.
 
Here, each Agent $i$ at time $t$ picks a policy $\gamma^{i}_{t}: I^{i}_{t} \mapsto u^{i}_{t}$ such that $\gamma^{i}_{t}$ is $\mathcal{B}(\mathbb{I}^{i}_{t} )$-measurable. The objective of the agents is to minimize 
\begin{equation} \label{Cost k-step}
   J(\gamma)=\sum_{t=K}^{\infty} \beta^{t} E^{\gamma}[c(x_{t},\mathbf{u_{t}})] 
\end{equation}

The next proposition shows that one can discretize the action spaces.

\begin{proposition}[Proposition 3.1 \cite{OMZSY-centralized-reduction}] \label{finite action approximation}
    Suppose the state space $\mathbb{X}$ and the action space $\mathbb{U}=\prod\limits_{i=1}^{N}\mathbb{U}^{i}$ are compact. Additionally, assume that the transition kernel $\mathcal{T}$ is continuous, under total variation, in $x$ and $u$ that is to say for any $(x_{n},\mathbf{u}_{n})\rightarrow (x,\mathbf{u})$: 
    \[\|\mathcal{T}(x.|x_{n},\mathbf{u}_{n})-\mathcal{T}(x.|x,\mathbf{u})\|_{TV}\rightarrow 0.\] Let $\mathbb{U}_{m}=\{\mathbf{u}_{1},...,\mathbf{u}_{N_{m}}\}$ be such that for any $\mathbf{u}\in \mathbb{U}$ there exists $\mathbf{u}_{j}\in \mathbb{U}_{m}$ such that $d(\mathbf{u},\mathbf{u}_{j})\leq \frac{1}{m}$ where $m\in \mathbb{N}$. Let $\Gamma_{m}$ denote the set of admissible team policies with range in $\mathbb{U}_{m}$. Then, for any $\epsilon>0$ there exists $m'$ such that for all $m\geq m'$ we have that $\displaystyle |\inf_{\gamma\in \Gamma}J(\gamma)-\inf_{\gamma\in \Gamma_{m}}J(\gamma)|\leq \epsilon$.
\end{proposition}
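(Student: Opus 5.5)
The plan is to compare each admissible team policy $\gamma \in \Gamma$ with a quantized policy $\gamma_m \in \Gamma_m$ obtained by composing $\gamma$ with a nearest-neighbour map, and to show that the cost gap vanishes uniformly in $\gamma$. Since $\Gamma_m \subset \Gamma$, we already have $\inf_{\Gamma} J \le \inf_{\Gamma_m} J$. So it suffices to show that for every $\gamma$ there is $\gamma_m$ with $J(\gamma_m) \le J(\gamma) + \epsilon$ once $m$ is large, with $m$ independent of $\gamma$.

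\textbf{Construction of $\gamma_m$.} Fix a measurable nearest-point map $\mathcal{Q}_m:\mathbb{U}\to\mathbb{U}_m$ with $d(\mathbf{u},\mathcal{Q}_m(\mathbf{u}))\le 1/m$, breaking ties by lowest index. To keep the resulting policy decentralized, I would quantize agent by agent: choose finite $1/m$-nets of each compact $\mathbb{U}^i$ and take $\mathbb{U}_m$ to be their product. Then define $\gamma^i_{m,t} := \mathcal{Q}^i_m \circ \gamma^i_t$, which is measurable in the same information $I^i_t$. The subtlety is that $\gamma_m$ feeds its own quantized actions back into the dynamics, so the agents' information, and hence their actions, differ between the two runs.

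\textbf{Truncation and coupling.} Because $c$ is bounded, fix $T$ with $2\|c\|_\infty \beta^T/(1-\beta) < \epsilon/2$; it then suffices to control the first $T$ stages. Use a common randomness construction: the same $x_0$, $\{v_t\}$ and $\{w^i_t\}$ drive both closed-loop systems. The cleaner route is to compare joint laws in total variation.

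Let $P_t$ and $P^m_t$ be the joint laws of $(x_{[0,t]}, \mathbf{y}_{[0,t]}, \mathbf{u}_{[0,t]})$ under $\gamma$ and under the following hybrid: $\gamma$ produces the actions, but their quantized versions are what enter $\mathcal{T}$ and $c$. Under this hybrid the information seen by the agents is the same measurable function of the trajectory, with quantized actions recorded. Comparing $\gamma$ with the hybrid, one step differs only in that the transition uses $\mathcal{T}(\cdot|x,\mathcal{Q}_m(\mathbf{u}))$ instead of $\mathcal{T}(\cdot|x,\mathbf{u})$. By the standard telescoping bound for kernels, the total variation distance between the $T$-step laws is at most
\[
\sum_{t<T} \sup_{x,\mathbf{u}} \|\mathcal{T}(\cdot|x,\mathbf{u})-\mathcal{T}(\cdot|x,\mathcal{Q}_m(\mathbf{u}))\|_{TV},
\]
and the measurement kernels are common to both, so they contribute nothing.

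\textbf{Main step: uniform continuity.} The key step is that total-variation continuity on the compact set $\mathbb{X}\times\mathbb{U}$ implies uniform continuity. Hence $\omega(1/m) := \sup\{\|\mathcal{T}(\cdot|x,\mathbf{u})-\mathcal{T}(\cdot|x,\mathbf{u}')\|_{TV} : d(\mathbf{u},\mathbf{u}')\le 1/m\} \to 0$. Similarly, $c$ is uniformly continuous, with modulus $\omega_c$. Combining these,
\[
|J(\gamma)-J(\gamma_m)| \le \sum_{t<T}\beta^t\bigl(\omega_c(1/m) + \|c\|_\infty\, t\,\omega(1/m)\bigr) + \epsilon/2,
\]
which is uniform in $\gamma$.

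\textbf{Expected obstacle.} The hard part is justifying that the hybrid equals the genuine closed loop under $\gamma_m$. I would handle this by noting that each agent's local record of its own past actions can be taken as the quantized ones, and by observing that $\gamma$ may be re-indexed to depend on the quantized action history. This re-indexing is well defined, so the infimum over $\Gamma$ is approximated equally well. Choosing $m'$ so that the finite-horizon sum is below $\epsilon/2$ then completes the proof.
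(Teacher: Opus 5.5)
Your overall architecture is sound. $\Gamma_m\subset\Gamma$ gives one inequality, and truncation handles the tail. The telescoping bound is correct \emph{provided} the two joint laws differ only in the transition kernel. Uniform continuity of $\mathcal{T}$ (in total variation) and of $c$ on the compact $\mathbb{X}\times\mathbb{U}$ then makes the estimate uniform in $\gamma$.

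The gap is exactly the step you flagged, and your proposed fix does not close it. As written, your hybrid is internally inconsistent. For the telescoping bound to hold, $\gamma$ must receive in the hybrid exactly the inputs it receives in the original closed loop, namely the \emph{unquantized} past actions: its own in $I^i_t$ and everyone's in $I^C_t$. If instead the agents record the quantized actions and $\gamma$ is evaluated on them (``re-indexed to depend on the quantized action history''), then the action kernels of the two systems also differ. Since $\gamma^i_t$ is merely measurable in its action arguments, $\gamma^i_t(\dots,\mathcal{Q}_m(\mathbf{u}_{[0,t-1]}))$ need not be close to $\gamma^i_t(\dots,\mathbf{u}_{[0,t-1]})$, and nothing in your bound controls this discrepancy. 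Nor can the unquantized history be recovered from the quantized one. Your claim that this re-indexing ``is well defined, so the infimum over $\Gamma$ is approximated equally well'' is essentially the conclusion you are trying to prove.

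The missing observation is that the action history is redundant. Policies are deterministic, and the information structures here are nested:
\begin{itemize}
\item $I^i_s\subseteq I^i_t$ for $s<t$;
\item for $s$ before the last sharing time, agent $j$'s entire information $(I^C_s,I^j_s)$ is contained in $I^C_t$.
\end{itemize}
By induction on $t$, every past action appearing in $(I^C_t,I^i_t)$ is a measurable function of the observations appearing there. Hence $\gamma$ has the same closed-loop law as an observation-only policy $\tilde\gamma$. Then $\gamma^i_{m,t}:=\mathcal{Q}^i_m\circ\tilde\gamma^i_t$ is admissible with finite range, and its closed loop is exactly your hybrid: the agents ignore the recorded quantized actions and recompute the virtual unquantized ones from their observations. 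With this replacement your estimate goes through.

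One further point: your switch to a product net is not cosmetic. For a non-product net of the joint space, requiring the \emph{joint} action to lie in $\mathbb{U}_m$ couples the agents' choices and can prevent them from using private information at all. You should therefore state explicitly that you read $\Gamma_m$ as per-agent quantization. For example, quantize each agent into the coordinate projection of the given $\mathbb{U}_m$; under the usual product metrics this projection is a $1/m$-net of $\mathbb{U}^i$.
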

Hence, throughout the rest of the paper we will assume that for all $i$: $\mathbb{U}^{i}$ is finite. This allows for the introduction of the following topology on the space of policies. 

\noindent{\bf Topology on the actions for the K-step periodic information sharing pattern}
Here, we introduce a topology on the space of local policies that map the private information to actions. Again, suppose there exists a reference measure $\psi\in \mathcal{P}(\mathbb{Y}^{i})$ and a measurable function $\displaystyle h: \mathbb{X} \times \bigcup_{i=1}^{N} \mathbb{Y}^{i}\rightarrow [0,\infty)$ such that the measurement channel, of each individual Agent $i$, $Q^{i}$ satisfies: \begin{equation} \label{Young top}
    Q^{i}(dy^{i}_{t}|x_{t})=\int \psi(dy^{i}_{t})h(x_{t},y^{i}_{t})
\end{equation}

\begin{definition} \label{top k-step}
For $i\in \{1,...,N\}$ and for all $n\in \mathbb{N}$ let $f^{i}_{qK,n}:\mathbb{Y}^{i}\rightarrow\mathbb{U}^{i}$ be a measurable function and let $f^{i}_{qK}:\mathbb{Y}^{i}\rightarrow\mathbb{U}^{i}$ be a measurable function. We say $f^{i}_{qK,n}\rightarrow f^{i}_{qK}$ if and only if $\mathcal{P}(\mathbb{Y}^{i}\times \mathbb{U}^{i})\ni \mu_{n}(.)=\int_{\cdot}\psi(dy^{i})f^{i}_{qK,n}(du^{i}_{qK}|y^{i}_{qK})\rightarrow \mu(.)=\int_{\cdot }\psi(dy^{i}_{qK})f^{i}_{qK}(du^{i}_{qK}|y^{i}_{qK})$ weakly, i.e., for all $g\in\mathbf{C_{b}}(\mathbb{Y}^{i}\times \mathbb{U}^{i})$:
\begin{eqnarray*}
&&\int g(y^{i},u^{i})\psi(dy^{i})f^{i}_{qK,n}(du^{i}_{qK}|y^{i}_{qK}) \\
&& \quad \quad \rightarrow \int g(y^{i},u^{i})\psi(dy^{i})f^{i}_{qK}(du^{i}_{qK}|y^{i}_{qK})
\end{eqnarray*}

Additionally, for $r>0$ we say $f^{i}_{qK+r,n}\rightarrow f^{i}_{qK+r}$ if and only if for all $u^{i}_{qK},...,u^{i}_{qK+r-1}$:   $\displaystyle \mathcal{P}((\mathbb{Y}^{i})^{r}\times \mathbb{U}^{i})\ni \mu_{n}(.)=\int_{\cdot}\prod\limits_{t=qK}^{t=qK+r}\psi(dy^{i}_{t})f^{i}_{qK+r,n}(du^{i}_{qK+r}|y^{i}_{[qK,qK+r]},u^{i}_{[qK,qK+r-1]})\rightarrow \mu \in \mathcal{P}((\mathbb{Y}^{i})^{r}\times \mathbb{U}^{i})$ weakly. Where, 
$\mu(.) =\int_{\cdot}\prod\limits_{t=qK}^{t=qK+r}\psi(dy^{i}_{t})f^{i}_{qK+r}(du^{i}_{qK+r}|y^{i}_{[qK,qK+r]},u^{i}_{[qK,qK+r-1]})$. Define for each $i\in\{1,...,N\}$: 
$a^{i}_{q}=(f^{i}_{qK},...,f^{i}_{(q+1)K-1})$. Then, we say that $a^{i}_{q,n}=(f^{i}_{qK,n},...,f^{i}_{(q+1)K-1,n})\rightarrow a^{i}_{q}=(f^{i}_{qK},...,f^{i}_{(q+1)K-1})$ if and only if for all $r\in \{0,...,K-1\}$: $f^{i}_{qK+r,n}\rightarrow f^{i}_{qK+r}$.
\end{definition}
For $t=qK+r$,  let $\pi_{t}(\cdot):=\pi_{q}(\cdot):=Z_{qK}=P(x_{qK} \in \cdot|y^{[1,N]}_{[0,qK-1]},u^{[1,N]}_{[0,qK-1]})$ and for each $i$, let $a^{i}_{q}=(f^{i}_{qK},...,f^{i}_{(q+1)K-1})$, where $f^{i}_{qK+r}:I^{P^{i}}_{qK+r}\mapsto u^{i}_{t}$. We use the notation $\tilde{\Gamma}^{i}_{q}$ to denote the space where $a^{i}_{q}$ takes values and $\mathbb{A}$ to denote the space where $(a^{1}_{q},...,a^{N}_{q})$ takes values. 

Next, we present an import result on the centralized reduction of KSPISP problems. 
    \begin{theorem} [Corollary 4.15 \cite{OMZSY-centralized-reduction}] \label{centralized reduction KSPISP}
        $ (\pi_{q},(a^{1}_{q},...,a^{N}_{q}); q\geq 1)$ forms a controlled Markov decision process. Moreover, $(\pi_{q},(a^{1}_{q},...,a^{N}_{q}); q\geq 1)$ is equivalent to the KSPISP problem under an appropriate choice of the cost criteria, and optimal policies are of the form $\gamma^{i}_{q}: \mathcal{P}(\mathbb{X})\ni\pi_{q} \mapsto a^{i}_{q} \in \tilde{\Gamma}^{i}_{q}$.
    \end{theorem}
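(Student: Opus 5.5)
We prove the statement by a direct information-state argument at the sampling times $qK$: build the kernel of $\pi_{q+1}$ given $(\pi_q,\mathbf{a}_q)$, build a stage cost, and then show that policies depending only on $\pi_q$ lose nothing. Throughout, write $\mathbf{a}_q=(a^1_q,\dots,a^N_q)$.

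\textbf{Step 1: the predictor recursion.} Fix a block $q$ and condition on $I^C_{qK}$. We first show that, for any admissible team policy, the private-block variables have a conditional law determined by $(\pi_q,\mathbf{a}_q)$ alone. These variables are $x_{[qK,(q+1)K]}$, $\mathbf{y}_{[qK,(q+1)K-1]}$ and $\mathbf{u}_{[qK,(q+1)K-1]}$. The function $a^i_q$ is the realization of $\gamma^i$ with $I^C_{qK}$ fixed. Given $I^C_{qK}$, the state $x_{qK}$ has law $\pi_q$, and the noises $w^i_t$, $v_t$ for $t\ge qK$ are independent of $I^C_{qK}$.

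Within the block, Agent $i$'s action at $t=qK+r$ is $f^i_{qK+r}(y^i_{[qK,t]},u^i_{[qK,t-1]})$. The joint law of the block can therefore be written as the product
\[
\pi_q(dx_{qK})\prod_{r=0}^{K-1}\Big[\prod_{i=1}^N Q^i(dy^i_{qK+r}|x_{qK+r})\,f^i_{qK+r}(du^i_{qK+r}|\cdot)\Big]\mathcal{T}(dx_{qK+r+1}|x_{qK+r},\mathbf{u}_{qK+r}).
\]
Bayes' rule with the densities $h$ from (\ref{Young top}) then gives
\[
\pi_{q+1}=F_K\big(\pi_q,\mathbf{y}_{[qK,(q+1)K-1]},\mathbf{u}_{[qK,(q+1)K-1]}\big),
\]
where $F_K$ is the $K$-fold iteration of the map $F$ in (\ref{equation for F}); $F_K$ is measurable and policy-independent. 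Consequently
\[
P\big(\pi_{q+1}\in\cdot\mid I^C_{qK},\mathbf{a}_q\big)=\eta_K(\cdot\mid\pi_q,\mathbf{a}_q),
\]
with $\eta_K$ obtained by integrating $\mathbbm{1}_{\{F_K(\cdot)\in\cdot\}}$ against the product measure above. Since $\pi_{[1,q-1]}$ and $\mathbf{a}_{[1,q-1]}$ are functions of $I^C_{qK}$ and the past, this yields the controlled Markov property.

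\textbf{Step 2: the stage cost.} We set
\[
\tilde c(\pi_q,\mathbf{a}_q)=\sum_{r=0}^{K-1}\beta^{r}E\big[c(x_{qK+r},\mathbf{u}_{qK+r})\mid \pi_q,\mathbf{a}_q\big],
\]
computed under the same product measure. The tower property then gives
\[
J(\gamma)=\sum_{q\ge1}\beta^{qK}E[\tilde c(\pi_q,\mathbf{a}_q)],
\]
which matches (\ref{Cost k-step}) since that sum starts at $t=K$. This is the ``appropriate choice of the cost criteria'': an MDP with discount $\beta^K$ and cost $\tilde c$.

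\textbf{Step 3: equivalence and structure.} Every team policy $\gamma$ induces a policy for the lifted MDP, namely the map $I^C_{qK}\mapsto\mathbf{a}_q$ defined by freezing the common-information argument, and this lifted policy has the same cost. Conversely, any lifted policy, even one depending on the full history of $(\pi,\mathbf{a})$, can be realized as a team policy, because every agent can compute $\pi_q$ and that history from $I^C_{qK}$. For Markov (and stationary, under regularity) optimality within the lifted MDP, we invoke standard results for MDPs on standard Borel spaces: history-dependent policies can be replaced by Markov ones without loss, since the state process is a controlled Markov chain. This yields optimal policies of the form $\pi_q\mapsto a^i_q$ whenever an optimum exists, and $\epsilon$-optimal ones in general.

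\textbf{Main obstacle.} The delicate point is measurability. We need the freezing map $I^C_{qK}\mapsto a^i_q$ to be a measurable map into $\tilde\Gamma^i_q$, equipped with the Borel structure from Definition \ref{top k-step}. We also need $\eta_K$ and $\tilde c$ to be measurable in $\mathbf{a}_q$. I would handle both through the identification of policies with measures in $E^i$ (and their $r$-step analogues) with fixed $\psi$-marginals, using \cite{BorkarRealization}. Under this identification, integrals of bounded measurable integrands against $f(du|y)\psi(dy)$ are measurable in the measure, and the density representation via $h$ transfers this measurability to $Q^i$.
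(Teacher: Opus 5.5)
\textbf{Gap in Step 3.} Your Steps 1 and 2 are the construction the paper relies on. The kernel $\theta$ is the push-forward of the block product measure under $G$, the $K$-fold iterate of $F$, and the stage cost is (\ref{New k-step Cost}) with discount $\beta^K$. You should say explicitly why $G$ is policy independent: the factors $f^i_{qK+r}(du^i\mid\cdot)$ do not depend on $x$, so they cancel in Bayes' rule.

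The trouble is in Step 3. The map $I^C_{qK}\mapsto\mathbf{a}_q$, obtained by freezing the common-information argument, is not a policy of the lifted MDP. It may depend on the raw data $\mathbf{y}_{[0,qK-1]},\mathbf{u}_{[0,qK-1]}$, whose $\sigma$-field is in general strictly larger than that of the lifted history $(\pi_{[1,q]},\mathbf{a}_{[1,q-1]})$. The MDP fact you invoke (history-dependent policies can be replaced by Markov ones) concerns this smaller class of policies, not team policies. So you have not yet shown that no team policy beats a policy of the form $\pi_q\mapsto\mathbf{a}_q$, and that is precisely the separation part of the statement.

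The repair uses what you already have, because in Step 1 you conditioned on $I^C_{qK}$ and not merely on the lifted history. That gives both $P(\pi_{q+1}\in\cdot\mid I^C_{qK})=\theta(\cdot\mid\pi_q,\mathbf{a}_q)$ and $E[\sum_{r=0}^{K-1}\beta^r c(x_{qK+r},\mathbf{u}_{qK+r})\mid I^C_{qK}]=\tilde c(\pi_q,\mathbf{a}_q)$. You can then finish in either of two ways:
\begin{enumerate}
\item Run the dynamic-programming verification argument directly along the filtration $\{\sigma(I^C_{qK})\}_q$.
\item Define the randomized lifted policy $\kappa_q(\cdot\mid\pi_{[1,q]},\mathbf{a}_{[1,q-1]}):=P^{\gamma}(\mathbf{a}_q\in\cdot\mid\pi_{[1,q]},\mathbf{a}_{[1,q-1]})$. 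A regular version exists because $\tilde\Gamma^i_q$ is standard Borel under the identification with $\Theta^i$. Then check by induction, using the tower property and Step 1, that $\kappa$ induces the same law of $(\pi_q,\mathbf{a}_q)_{q\ge 1}$ as $\gamma$, hence the same cost.
\end{enumerate}
Only after this step do the standard MDP results apply. They must also deliver a \emph{deterministic} Markov optimizer. Your converse direction, that lifted policies are realizable as team policies, holds only for deterministic lifted policies, since a randomized choice of $\mathbf{a}_q$ would need randomness shared by all agents.
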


Here, the transition kernel for the MDP $(\pi_{q},(a^{1}_{q},...,a^{N}_{q}); q\geq 1)$ is given by 
\begin{align} \label{Theta definition}
    &\theta(\pi_{q+1}\in \cdot|\pi_{q},a_{q}^{[1,N]}):=P(\pi_{q+1}\in \cdot |\pi_{q},a_{q}^{[1,N]})\\
 \nonumber   &=\int \mathbbm{1}_{\{G(\pi_{q},u_{[qK,qK+K-1]}^{[1,N]},y_{[qK,qK+K-1]}^{[1,N]}) \in \cdot\}}\\
 &P(dy^{[1,N]}_{[qK,qK+K-1]}, du_{[qK,qK+K-1]}^{[1,N]}|\pi_{[0,q]},a_{[0,q]}^{[1,N]})   \end{align}
Where,
\begin{align*}
    &P(dy^{[1,N]}_{[qK,qK+K-1]},d u_{[qK,qK+K-1]}^{[1,N]}|\pi_{[0,q]},a_{[0,q]}^{[1,N]})= \\
    &\int \pi_{q}(dx_{Kq})(\ \prod\limits_{i=1}^{N}Q^{i}(dy^{i}_{Kq}|x_{Kq})f^{i}_{qK}(du^{i}_{qK}|y^{i}_{Kq}))\\
    &\mathcal{T}(dx_{Kq+1}|x_{Kq},u^{[1,N]}_{Kq})\big( \prod\limits_{i=1}^{N}Q^{i}(dy^{i}_{Kq+1}|x_{Kq+1})\\
    &f^{i}_{qK+1}(du^{i}_{qK+1}|y^{i}_{[Kq,Kq+1]},u^{i}_{qK})\big)\\
    &\mathcal{T}(dx_{Kq+2}|x_{Kq+1}, u^{[1,N]}_{Kq+1})...\\
    &\mathcal{T}(dx_{qK+K-1}|x_{qK+K-2}, u^{[1,N]}_{qK+K-2})\\
    &(\ \prod\limits_{i=1}^{N}Q^{i}(dy^{i}_{qK+K-1}|x_{qK+K-1})\\
    &f^{i}_{qK+K-1}(du^{i}_{qK+K-1}|y^{i}_{[Kq,Kq+K-1]},u^{i}_{[qK,qK+K-2]}))
\end{align*}
and $G$ is defined recursively as 
\begin{equation*}
 \begin{aligned}
     &G(\pi_{q},u_{[qK,qK+K-1]}^{[1,N]},y_{[qK,qK+K-1]}^{[1,N]})=\pi_{q+1}\\
     &= Z_{(q+1)K} =F(Z_{(q+1)K-1},u_{(q+1)K-1}^{[1,N]},y_{(q+1)K-1}^{[1,N]})\\
    &=F(F(Z_{(q+1)K-2},u_{(q+1)K-2}^{[1,N]},y_{(q+1)K-2}^{[1,N]}),\\
    &u_{(q+1)K-1}^{[1,N]},y_{(q+1)K-1}^{[1,N]}) 
 \end{aligned}   
\end{equation*}

\noindent Note that the equivalent cost criteria is given by
\begin{equation}
    J=\sum_{q=1}^{\infty} (\beta^{K})^{q} E^{\Tilde{\gamma}}[\Tilde{c} \label{mykperiod}(\pi_{q},a^{1}_{q},...,a^{N}_{q})]
\end{equation}
where, 
\begin{align}  \label{New k-step Cost}
   \nonumber &\Tilde{c}(\pi_{q},a^{1}_{q},...,a^{N}_{q})= \int \pi_{q}(dx_{qK}) \\
   \nonumber &\sum_{t=qK}^{t=(q+1)K-1} \beta^{t-qK} (\prod\limits_{i=1}^{N}Q^{i}(dy^{i}_{qK}|x_{qK})f^{i}_{qK}(du^{i}_{qK}|y^{i}_{qK}))\\ 
    &\mathcal{T}(dx_{qK+1}|x_{qK},\mathbf{u}_{qK})...\mathcal{T}(dx_{t}|x_{t-1},\mathbf{u}_{t-1})\\
    &(\prod\limits_{i=1}^{N}Q^{i}(dy^{i}_{t}|x_{t})f^{i}_{t}(du^{i}_{t }|y^{i}_{[qK,qK+r]},u^{i}_{[qK,qK+r-1]}))c(x_{t},\mathbf{u}_{t})
\end{align}
Next, we present a result on the weak-Feller regularity of the kernel $\theta$ which will be consequential for the convergence of quantized Q-learning (Section \ref{LearningQSectionInf}).

\begin{theorem}[Theorem 5.4 \cite{OMZSY-centralized-reduction}]\label{theorem6} Suppose that $\mathbb{X}$ is compact. Additionally, suppose the transition kernel $\mathcal{T}\left(x_{t+1}. \mid x_{t}=x, u_{t}=u\right)$ is continuous under total variation in $x$ and $u$, and $h(x, y)$ is continuous in $x$. Then the MDP $\left(\pi q ; \mathbf{a}_{q}\right)$ is weak-Feller.
\end{theorem}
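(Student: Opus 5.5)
To show that $\theta$ is weak-Feller, I would take a convergent sequence $(\pi_{q,n},\mathbf{a}_{q,n})\to(\pi_q,\mathbf{a}_q)$, with $\pi_{q,n}\to\pi_q$ weakly and $a^i_{q,n}\to a^i_q$ in the sense of Definition \ref{top k-step}, and prove
\[
\int g(\pi_{q+1})\,\theta(d\pi_{q+1}|\pi_{q,n},\mathbf{a}_{q,n})\to\int g(\pi_{q+1})\,\theta(d\pi_{q+1}|\pi_q,\mathbf{a}_q)
\]
for every $g\in C_b(\mathcal{P}(\mathbb{X}))$. Using (\ref{Young top}), I would rewrite the joint law of the block $(\mathbf{y}_{[qK,qK+K-1]},\mathbf{u}_{[qK,qK+K-1]})$ as a density with respect to the fixed product reference measure $\prod_{t,i}\psi(dy^i_t)$ composed with the local policies:
\[
\prod_{t,i}\psi(dy^i_t)\,f^i_{t}(du^i_t|\cdot)\,\cdot\, H(\pi_q,\mathbf{u},\mathbf{y}),
\]
where
\[
H(\pi,\mathbf{u},\mathbf{y})=\int\pi(dx_{qK})\prod_{t}\Big(\prod_i h(x_t,y^i_t)\Big)\mathcal{T}(dx_{t+1}|x_t,\mathbf{u}_t).
\]
The integrand then becomes $g(G(\pi_q,\mathbf{u},\mathbf{y}))\,H(\pi_q,\mathbf{u},\mathbf{y})$.

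\textbf{Step 1: continuity in $\pi$ for fixed $(\mathbf{u},\mathbf{y})$.} I would show that $\pi\mapsto H(\pi,\mathbf{u},\mathbf{y})$ and $\pi\mapsto G(\pi,\mathbf{u},\mathbf{y})$ are continuous. The tool is the fact that if $\mu_n\to\mu$ weakly and $\phi_n\to\phi$ continuously convergent and uniformly bounded, then $\int\phi_n\,d\mu_n\to\int\phi\,d\mu$. Because $\mathbb{X}$ is compact, $h(\cdot,y)$ is continuous and hence bounded on $\mathbb{X}$ for each fixed $y$. Total-variation continuity of $\mathcal{T}$ makes $x\mapsto\int\phi(x')\mathcal{T}(dx'|x,\mathbf{u})$ continuous for every bounded measurable $\phi$. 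Iterating backwards through the $K$ steps gives continuity of $H$ and of the unnormalized measure defining $G$ via $F$ in (\ref{equation for F}). Normalization preserves continuity wherever $H>0$, and the set $\{H=0\}$ is null under the limiting law.

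\textbf{Step 2: passing to the limit jointly in the policies.} This is the main obstacle: the policies converge only in the Young sense, and the block has $K$ time steps with nested dependence on past private actions. Since $\mathbb{U}^i$ is finite, I would enumerate all action histories $u^i_{[qK,qK+r-1]}$. Then, as Definition \ref{top k-step} prescribes, each $f^i_{qK+r}$ converges as a measure on $(\mathbb{Y}^i)^{r+1}\times\mathbb{U}^i$ for each fixed history. The product over agents and times of these Young measures against the fixed $\psi$ marginals converges weakly: each factor converges with a fixed $\mathbb{Y}$-marginal, and by the Remark after Definition \ref{topology for actions onestep} (\cite{Schal,balder2001}) convergence holds against test functions that are merely measurable in $u$ and continuous in $y$. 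To go from individual factors to the product, I would telescope, changing one factor at a time. Each intermediate test function, after integrating out the other factors, is bounded and continuous in the current $y$ variables by Step 1. The $y$-continuity of $H$ follows from $h$, or I would approximate $h(x,\cdot)$ in $L^1(\psi)$ if it is only measurable in $y$. I expect this $L^1$ approximation to be the delicate point.

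\textbf{Step 3: combining.} I would bound the difference in two pieces: $|\int g\,H_n\,d\mu_n-\int g\,H\,d\mu_n|$, which tends to zero by the uniform convergence obtained from compactness of $\mathbb{X}$ together with dominated convergence, and $|\int g\,H\,d\mu_n-\int g\,H\,d\mu|$, which tends to zero by Step 2. Together these give the weak-Feller property of $\theta$.
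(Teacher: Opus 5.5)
Your Step 1 is sound. For fixed $(\mathbf{u},\mathbf{y})$, the backward-iterated functions are continuous and bounded on the compact $\mathbb{X}$, so $H$ and $G$ are continuous in $\pi_q$. The gap is in Step 2, the only place where convergence of the policies enters. The factors $f^i_{qK+r,n}$ of one agent at different times share the variable $y^i_{qK}$ (and the action history). So in any ordering of your telescoping sum there is a term $\int (f^i_{t,n}-f^i_{t})\,\Lambda\,R_n\,d\psi^{\otimes}$ whose cofactor $R_n$ contains some $f^i_{s,n}$, $s\neq t$, sharing variables with $f^i_{t,n}$. That test function depends on $n$, and convergence of each factor against fixed integrands does not control it. Your argument also never uses that the limit maps are deterministic, and without that the conclusion is false. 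Take $\psi$ Lebesgue on $\mathbb{Y}^i=[0,1]$, $\mathbb{U}^i=\{0,1\}$, and $f_{qK,n}(y_{qK})=f_{qK+1,n}(y_{qK},y_{qK+1},u_{qK})=r_n(y_{qK})$ with $r_n(y)=\mathbbm{1}_{\{\lfloor 2^n y\rfloor\ \mathrm{even}\}}$. Both factors converge as Young measures to the fair-coin policy, yet $\mu_n(\{u_{qK}=u_{qK+1}=1\})=\int r_n\,d\psi=1/2$ for all $n$, not $1/4$. In addition, your integrand $\Lambda:=g(G(\pi_q,\cdot))H(\pi_q,\cdot)$ is in general neither continuous in $\mathbf{y}$ nor bounded in $\mathbf{y}$. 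Only continuity of $h$ in $x$ is assumed, and Step 1 gives continuity in $\pi_q$, not in $\mathbf{y}$; $g\circ G$ would need it too. For the same reason, the ``uniform convergence'' you invoke in Step 3 does not hold in $\mathbf{y}$.

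The missing idea is that finiteness of $\mathbb{U}^i$ together with a deterministic limit upgrades Young convergence to convergence in measure. With the $\mathbb{Y}$-marginal fixed, the Sch\"al--Balder result gives convergence against bounded integrands that are measurable in $y$ and continuous in $u$. The Remark after Definition~\ref{topology for actions onestep} states these roles interchanged, and continuity in $u$ is automatic here. Testing with $\mathbbm{1}_{\{f=u\}}(y)\mathbbm{1}_{\{u'=u\}}$ for each $u$ gives $\psi(f_n\neq f)\to0$. Doing this for every agent, every $r$ and each of the finitely many action histories gives $\psi^{\otimes}(\mathbf{u}_n(\mathbf{y})\neq\mathbf{u}(\mathbf{y}))\to0$ for the realized action blocks. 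Hence
\[
\Big|\int\Lambda\,d\mu_n-\int\Lambda\,d\mu\Big|\le\|g\|_{\infty}\int_{\{\mathbf{u}_n\neq\mathbf{u}\}}\sum_{\mathbf{u}'}H(\pi_q,\mathbf{u}',\mathbf{y})\,\psi^{\otimes}(d\mathbf{y})\to0,
\]
because $\int H(\pi_q,\mathbf{u}',\cdot)\,d\psi^{\otimes}=1$ for each fixed open-loop $\mathbf{u}'$. The same normalization repairs the first piece of Step 3. By Scheff\'e, $H(\pi_{q,n},\mathbf{u}',\cdot)\to H(\pi_q,\mathbf{u}',\cdot)$ in $L^1(\psi^{\otimes})$. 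Every $\mu_n$ is dominated by the fixed measure $\psi^{\otimes}\otimes\sum_{\mathbf{u}'}\delta_{\mathbf{u}'}$, so the generalized dominated convergence theorem applies. No $y$-continuity and no $L^1$ approximation of $h$ are needed. This total-variation convergence in the policy argument is what the paper's route relies on; compare the Appendix proof of Proposition~\ref{prop tilde theta WF}. There, conditionally on $x_{qK}$, one shows $P(d\mathbf{y}\mid x^n,\mathbf{a}^n)\to P(d\mathbf{y}\mid x,\mathbf{a})$ in total variation and then uses the generalized DCT with continuity of $G$. Once repaired, your density formulation is a valid alternative to that conditioning argument. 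Since your Step 1 only integrates continuous functions against $\mathcal{T}$, weak continuity of $\mathcal{T}$ would already suffice there.
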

Theorem \ref{theorem6} leads to the following existence result.
\begin{corollary}\label{existence KSDISP}
    Suppose that $\mathbb{X}$ is compact. Additionally, suppose the transition kernel $\mathcal{T}\left(x_{t+1}  . \mid x_{t}=x, u_{t}=u\right)$ is continuous under total variation in $x$ and $u$, and $h(x, y)$ is continuous in $x$. Then, an optimal solution to the KSPISP problem exists and an optimal policy is stationary.
\end{corollary}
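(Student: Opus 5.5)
The corollary is obtained from Theorem \ref{theorem6} together with the standard existence theory for weak-Feller discounted MDPs with compact action sets (e.g.\ \cite[Chapter 3]{HernandezLermaMCP}). Throughout we work with the centralized reduction $(\pi_q,\mathbf{a}_q)$ of Theorem \ref{centralized reduction KSPISP}. Its state space is $\mathcal{P}(\mathbb{X})$, which is compact under the weak topology because $\mathbb{X}$ is compact. Its discount factor is $\beta^K\in(0,1)$ and its cost is $\tilde c$ from (\ref{New k-step Cost}). Once an optimal stationary policy exists for this MDP, the equivalence in Theorem \ref{centralized reduction KSPISP} transfers existence and stationarity back to the original KSPISP problem.

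\textbf{Step 1: compactness of the action set.} Each $\mathbb{U}^i$ is finite by the standing assumption after Proposition \ref{finite action approximation}. For every $r$, the set of measures $\prod_t\psi(dy_t^i)\,f(du\mid\cdot)$ with fixed $\psi$-marginal, where $f$ ranges over randomized policies, is therefore tight and closed, hence compact. Deterministic maps are the extreme points of this set (\cite[Lemma 2.2]{BorkarRealization}), and we use the compact, convex Young-type formulation in the usual way. The finite product over $r\in\{0,\dots,K-1\}$ and $i\in\{1,\dots,N\}$ is then compact, so $\mathbb{A}$ is compact under the topology of Definition \ref{top k-step}.

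\textbf{Step 2: continuity of the cost.} We show that $\tilde c$ is continuous and bounded on $\mathcal{P}(\mathbb{X})\times\mathbb{A}$. Boundedness follows from boundedness of $c$. For continuity, write each summand of $\tilde c$ as an integral of $\prod\psi(dy)\,f(du\mid\cdot)$ against a function of $(y_{[qK,t]},u_{[qK,t]})$. This function is obtained by integrating $c$ and the densities $h$ against $\pi_q$ and the kernels $\mathcal{T}$. Total-variation continuity of $\mathcal{T}$, continuity of $h$ in $x$, and compactness of $\mathbb{X}$ make this function continuous in $\pi_q$ uniformly in the remaining arguments, and measurable and bounded in $y$. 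Since the $y$-marginal of the action measures is fixed at $\psi$, the remark following Definition \ref{topology for actions onestep} (\cite[Theorem 3.10]{Schal}) then gives joint continuity along converging sequences.

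\textbf{Step 3: existence of an optimal stationary policy.} Theorem \ref{theorem6} gives that $\theta$ is weakly continuous. Combined with Steps 1 and 2, the Bellman operator
\[
Tv(\pi)=\min_{\mathbf{a}\in\mathbb{A}}\Big[\tilde c(\pi,\mathbf{a})+\beta^K\int v(\pi')\,\theta(d\pi'\mid\pi,\mathbf{a})\Big]
\]
maps $C_b(\mathcal{P}(\mathbb{X}))$ into itself and is a $\beta^K$-contraction. Its fixed point $v^*$ is the value function. A measurable selection of minimizers (\cite[Chapter 3]{HernandezLermaMCP}) yields a stationary policy $\pi\mapsto\mathbf{a}$ attaining $v^*$.

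The main obstacle is the continuity of $\tilde c$ in Step 2. The integrand depends on $y$ in a merely measurable way, so we need the fixed-marginal continuity result in place of plain weak convergence. We also need uniformity in $\pi$, which we obtain from compactness of $\mathbb{X}$ and total-variation continuity of $\mathcal{T}$. We would handle this exactly as in the proof of Theorem \ref{theorem6}.
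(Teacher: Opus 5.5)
Your route is the paper's. The paper obtains this corollary directly from Theorem \ref{theorem6} together with the existence and measurable-selection theory for weak-Feller discounted MDPs \cite[Chapter 3]{HernandezLermaMCP}, and your Steps 2--3 spell out what that appeal presupposes. The genuine gap is in Step 1: the set $\mathbb{A}$ of \emph{deterministic} maps is not compact in the topology of Definition \ref{top k-step} once $\psi$ has a non-atomic part. For example, take $\mathbb{Y}^i=[0,1]$, $\psi$ Lebesgue measure, $\mathbb{U}^i=\{0,1\}$, and $f_n=\mathbbm{1}_{A_n}$ with $A_n=\{y:\lfloor 2^n y\rfloor \mbox{ odd}\}$. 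Then $\psi(dy)\,\delta_{f_n(y)}(du)\to\psi(dy)\otimes\frac12(\delta_0+\delta_1)(du)$, and no deterministic map induces this limit. Extreme points of a compact convex set need not form a closed set, so \cite[Lemma 2.2]{BorkarRealization} does not transfer compactness to $\mathbb{A}$. Without compactness, the minimum in your Bellman operator need not be attained, and neither the maximum theorem nor the measurable selection in Step 3 applies as stated. Your phrase ``in the usual way'' hides exactly the argument that is needed.

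What is missing is a relaxation-plus-purification argument, and for $K\geq 2$ the relaxation cannot be the componentwise one of Definition \ref{top k-step}. Relax each agent's action to its joint law of $(y^i_{[qK,qK+K-1]},u^i_{[qK,qK+K-1]})$ under $\psi^{\otimes K}$, i.e., to a randomized behavioral policy. This set is convex and compact: its constraints are linear and weakly closed, its $y$-marginal is fixed, and $\mathbb{U}^i$ is finite. Its extreme points are the deterministic policies. Because $G$ does not depend on the policy, the Bellman objective extends to this set, is continuous there, and is affine in each agent's law when the others are fixed, since the law of $(\mathbf{y},\mathbf{u})$ is a $\pi$-dependent density times a product over agents. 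A minimizer can then be moved agent by agent to an extreme point without changing the value. You still need a measurable selection of such a deterministic minimizer.

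The componentwise relaxation fails. Take $N=1$ and $f_{qK,n}(y_{qK})=f_{qK+1,n}(y_{qK},y_{qK+1},u_{qK})=\mathbbm{1}_{A_n}(y_{qK})$. These force $u_{qK}=u_{qK+1}$ for every $n$, yet they converge componentwise to two independent fair coins. Now let $\mathbb{X}=\{0,1\}$, $h\equiv 1$, $\mathcal{T}(\cdot\mid x,u)=\delta_u$ and $c(x,u)=\mathbbm{1}\{x\neq u\}$; all hypotheses of the corollary hold. The $t=qK+1$ term of $\tilde c$ is $0$ along the sequence but $\beta/2$ at the componentwise limit. So under Definition \ref{top k-step}, your Step 2 holds on deterministic maps, because a deterministic Young limit is a limit in $\psi$-measure. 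Your Step 1, however, holds only on the relaxed set, so the two steps never hold on the same action set. Separately, the uniformity in $y$ claimed in Step 2 fails for unbounded $h$. What you actually need is $L^1(\psi)$-convergence of the $\pi$-dependent integrand, which Scheff\'e's lemma gives.
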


\section{Sliding Memory Periodic Information Sharing Pattern and Near Optimality for K-step Periodic Sharing} \label{Finite memory approximations} \label{sliding}
When attempting to learn optimal solutions for the $K$-step periodic information sharing problem (\ref{k-step info},\ref{Cost k-step}) a prohibitive issue arises due to the expanding size of common information. We address this problem in this section.

\subsection{Sliding Memory Periodic Information Sharing Pattern}

We first obtain a useful supporting result. 

\begin{theorem}\label{theorem8}
Let $\hat{Z}_{q}=(\pi_{q-M},I^{M}_{q})$ where $I^{M}_{q}=\{y_{[(q-M)K,qK-1]}^{[1,N]},a_{[(q-M),q-1]}\}$. Then, $(\hat{Z}_{q},a_{q})$ forms a controlled MDP which is equivalent to the K-periodic belief sharing pattern problem introduced in Theorem \ref{centralized reduction KSPISP}.
\end{theorem}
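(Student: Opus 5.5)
We will prove the statement by showing that $\hat{Z}_{q}$ is a deterministic, invertible recoding of the predictor process $\pi_q$ of Theorem \ref{centralized reduction KSPISP}, enlarged by a finite window of past observations and actions. The controlled Markov property and the equivalence of costs are then inherited from that theorem. The argument has three steps: a forward map from $\hat{Z}_q$ to $\pi_q$, the Markov property of $(\hat{Z}_q,a_q)$, and equivalence of the optimization problems.

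\textbf{Step 1 (forward map).} Applying the recursion $G$ (equivalently, $K$ successive applications of $F$ in (\ref{equation for F})) $M$ times gives
\[
\pi_q=\Psi_M\bigl(\pi_{q-M},y^{[1,N]}_{[(q-M)K,qK-1]},u^{[1,N]}_{[(q-M)K,qK-1]}\bigr)
\]
for a measurable map $\Psi_M$. The actions $u^{[1,N]}_{[(q-M)K,qK-1]}$ are not listed separately in $I^M_q$. However, for each block $p\in\{q-M,\dots,q-1\}$ they are deterministic functions of $a_p$ and the observations, since $u^i_{pK+r}=f^i_{pK+r}(y^i_{[pK,pK+r]},u^i_{[pK,pK+r-1]})$ with $f^i$ the deterministic components of $a^i_p$. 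Hence $\pi_q=\tilde\Psi_M(\hat{Z}_q)$ for a measurable map $\tilde\Psi_M$. Measurability of the evaluation map $(f,y)\mapsto f(y)$ on the Young-topologized space of policies holds because $\Theta^i$ is a Borel subset of $E^i$, as stated after Definition \ref{topology for actions onestep}.

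\textbf{Step 2 (Markov property).} We write the new state as
\[
\hat{Z}_{q+1}=\Bigl(G\bigl(\pi_{q-M},u^{[1,N]}_{[(q-M)K,(q-M+1)K-1]},y^{[1,N]}_{[(q-M)K,(q-M+1)K-1]}\bigr),\ I^{M}_{q+1}\Bigr).
\]
Here $I^M_{q+1}$ is obtained from $I^M_q$ by dropping the oldest block and appending $(y^{[1,N]}_{[qK,(q+1)K-1]},a_q)$. So $\hat{Z}_{q+1}$ is a measurable function of $(\hat{Z}_q,a_q,y^{[1,N]}_{[qK,(q+1)K-1]})$. It remains to compute the conditional law of the fresh block $y^{[1,N]}_{[qK,(q+1)K-1]}$ given the whole past $(\hat{Z}_{[0,q]},a_{[0,q]})$. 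Conditioning on the full common information $I^C_{qK}$, this law is the integral in (\ref{Theta definition}) with $\pi_q$ as the initial measure, and it depends only on $(\pi_q,a_q)$. Since $\pi_q=\tilde\Psi_M(\hat{Z}_q)$ is a function of $\hat{Z}_q$, the tower property transfers the statement to the coarser conditioning. The result is a kernel $\hat\theta(d\hat{Z}_{q+1}\mid \hat{Z}_q,a_q)$ defined as the pushforward of that integral under the update map. The cost is handled the same way: $\hat c(\hat{Z}_q,a_q):=\tilde c(\tilde\Psi_M(\hat{Z}_q),a_q)$, with $\tilde c$ from (\ref{New k-step Cost}).

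\textbf{Step 3 (equivalence).} Any policy $\hat\gamma(\hat{Z}_q)$ is admissible in the KSPISP, because $\hat{Z}_q$ is computable from $I^C_{qK}$: $\pi_{q-M}$ is a function of $I^C_{(q-M)K}$, and the window data is common information at time $qK$. Conversely, an optimal policy $\gamma(\pi_q)$ from Theorem \ref{centralized reduction KSPISP} becomes the policy $\gamma\circ\tilde\Psi_M(\hat{Z}_q)$, and the two induce identical joint laws of $(\pi_q,a_q)$. Hence the optimal values coincide, and optimal policies correspond to each other.

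The main obstacle is the care needed in Step 2. The kernel must not depend on earlier states; this follows from the Markov property of $\pi_q$ in Theorem \ref{centralized reduction KSPISP} together with measurability of $\tilde\Psi_M$. The early indices $q<M$ also need handling, which we will do by setting $\pi_{q-M}:=\mu$ or by truncating the window.
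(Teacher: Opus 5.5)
Your proposal is correct and follows essentially the same route as the paper: the paper also advances $\pi_{q-M}$ to $\pi_{q+1-M}=\tilde{G}(\pi_{q-M},I^M_q)$ using the oldest block of the window, with the actions regenerated from the deterministic maps in $a_{q-M}$, and then shifts the window. It likewise identifies the law of the fresh block $y^{[1,N]}_{[qK,qK+K-1]}$ as $\int P(dy\mid x_{qK},a_q)\,\pi_q(dx_{qK})$, since $\pi_q$ is recoverable from $(\pi_{q-M},I^M_q)$ by Bayesian updates, and sets $r(\hat{Z}_q,a_q)=\tilde{c}(\phi(\hat{Z}_q),a_q)$, where your $\tilde\Psi_M$ is its $\phi$. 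Your Step 3 and measurability discussion go beyond what the paper writes, but measurability of $(f,y)\mapsto f(y)$ does not follow merely from $\Theta^i$ being Borel, because the Young topology identifies maps that agree $\psi$-a.e.; use a measurably parametrized disintegration $P(du\mid y)$ instead, which is harmless here since the measurement laws are absolutely continuous with respect to $\psi$.
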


We recall that 
\[\pi_{q-M}(.)=P(x_{(q-M)K}\in.| y^{[1,N]}_{[0,(q-M)K-1]}, u^{[1,N]}_{[0,(q-M)K-1]})\] We also note that since $a_{[(q-M),q-1]}$ consists of all the maps used by the agents to map their private measurements into actions is informationally equivalent to $\{y_{[(q-M)K,qK-1]}^{[1,N]},a_{[(q-M),q-1]}, u_{[(q-M)K,qK-1]}^{[1,N]}\}$.
Here we endow actions with the topology introduced in Definition \ref{top k-step}, probability measures with the weak topology, and measurements with the standard topology of the measurement spaces.

\noindent\textbf{Proof.} We first note that
\begin{eqnarray*}
    &&\pi_{q+1-M}=\\
    &&G(\pi_{q-M},u_{[(q-M)K,(q-M)K+K-1]}^{[1,N]},y_{[(q-M)K,(q-M)K+K-1]}^{[1,N]})\\
               &&=:\tilde{G}(\pi_{q-M},I^{M}_{q})
\end{eqnarray*}
Here, $\pi_{q+1-M}$ can be expressed as just a function of $ (\pi_{q-M},I^{M}_{q})$ because the measurements needed for the update are contained in $I^{M}_{q}$. Additionally, because we take the map-valued actions to be deterministic, $u_{[(q-M)K,(q-M)K+K-1]}^{[1,N]}$ can be generated using $a_{q-M}$ which is also included in $I^{M}_{q}$. It then follows that 
\begin{eqnarray*}
\nonumber    &&P\bigg((\pi_{q+1-M},I^{M}_{q+1}) \in .\mid \pi_{s-M}, I_{s}^{M}, a_{s} \text{ }s \leq q\bigg)\\
&&=\mathbbm{1}_{\{\tilde{G}(\pi_{q-M},I^{M}_{q}) \in .\}} \mathbbm{1}_{\{y_{[(q-M+1)K,qK+K-1]}^{[1,N]},a_{[q+1-M,q]}\}}\\
 && P\bigg(dy_{[qK,qK+K-1]}^{[1,N]} \mid \pi_{q-M}, I_{s}^{M}, a_{s} \text{ }s \leq q\bigg)
\end{eqnarray*}
Where,
\begin{eqnarray*}
\nonumber    &&P\bigg(dy_{[qK,qK+K-1]}^{[1,N]} \mid \pi_{q-M}, I_{s}^{M}, a_{q} \ \ s \leq q\bigg)\\
\nonumber    &&=\int P(dy_{[qK,qK+K-1]}^{[1,N]} \mid x_{qK}, a_{q})\\
&&P(dx_{qK} \mid \pi_{q-M}, I_{s}^{M}, a_{q} \text{ }s \leq q)\\
\nonumber    &&=\int P\bigg(dy_{[qK,qK+K-1]}^{[1,N]} \mid x_{qK}, a_{q}\bigg)\pi_{q}(dx_{qK})
\end{eqnarray*}
Here the last equality follows since, given $\{\pi_{q-M}, I_{q}^{M}, a_{q}\}$, one can perform Bayesian updates to obtain $\pi_{q}$.\\
Thus, we get
\begin{equation} \label{tilde theota}
    P(\hat{Z}_{q+1}\in.\mid \hat{Z}_{s}, a_{s} \ \ s\leq q)=:\tilde{\theta}(\hat{Z}_{q+1}\in. \mid \hat{Z}_{q}, a_{q})
\end{equation}
Additionally by considering, $r(\hat{Z}_{q}, a_{q})=\tilde{c}(\phi(\hat{Z}_{q}),a_{q})$ where $\phi$ maps $\hat{Z}_{q}$ to $\pi_{q}$ through recursive Bayesian updates, one obtains the desired cost equivalence.
\qed

Note that one can also express the cost $r(\hat{Z}_{q}, a_{q})$ as follows
\begin{eqnarray*}
\nonumber && r(\hat{Z}_{q}, a_{q}) =\int \pi_{q}(\pi_{q-M},I^{M}_{q})(dx_{qK})\\
&&\int \sum_{t=q}^{t=(q+1)K-1} \beta^{t-qK} (\prod\limits_{i=1}^{N}Q^{i}(dy^{i}_{qK}|x_{qK})f^{i}_{qK}(du^{i}_{qK}|y^{i}_{qK}))\\
&&\mathcal{T}(dx_{qK+1}|x_{qK},\mathbf{u}_{qK}) \ldots \\
&&\mathcal{T}(dx_{t}|x_{t-1},\mathbf{u}_{t-1})(\prod\limits_{i=1}^{N}Q^{i}(dy^{i}_{t}|x_{t-1})\\
\nonumber    &&f^{i}_{t}(du^{i}_{t }|y^{i}_{[qK,qK+r]},u^{i}_{[qK,qK+r-1]}))c(x_{t},\mathbf{u}_{t}) \text{, }
\end{eqnarray*}
where \begin{eqnarray*}
\pi_{q}(\pi_{q-M},I^{M}_{q})=\pi_{q}(\pi_{q-M},y^{[1,N]}_{[(q-M)K,qK-1]}, u^{[1,N]}_{[(q-M)K,qK-1]})
\end{eqnarray*}is the probability induced on $x_{qK}$ given $I^{M}_{q}$ and the fact that the filter at time $q-M$ is given by $\pi_{q-M}$. 

\begin{remark} \label{remark 5.1}
    By applying a similar proof to the one provided above, one can show that Theorem \ref{theorem8} also holds if $I_{q}^{M}$ is defined as $I_{q}^{M}=\{y_{[(q-M)K,qK-1]}^{[1,N]},u_{[(q-M)K,qK-1]}^{[1,N]}\}$. 
\end{remark}   

The result of Theorem \ref{theorem8} motivates the following information structure:
\begin{definition} The Sliding Memory periodic information sharing pattern consists of the following information structure: each Agent $i$ at time $t=qK+r$ has access to the common information: $I^{C}_{t}=\{\ y^{[1,N]}_{[(q-M)K,qK-1]}, u^{[1,N]}_{[(q-M)K,qK-1]} \}$ in addition to the private information $\{y^{i}_{[qK,qK+r]}, u^{i}_{[qK,qK+r-1]}\}$.
\end{definition}

The following is a critical supporting result.
\begin{lemma} \label{lemma for k-sliding window}
Let $\mu=P(dy_{[qK,qK+K-1]}\mid\pi_{q-M}, I_{q}^{M}, a_{q})$, and $\nu=P(dy_{[qK,qK+K-1]}\mid \pi^{*}, I_{q}^{M}, a_{q})$ for some fixed $\pi^{*}\in \mathcal{P}(\mathbb{X})$ such that $\pi_{q-M}\ll \pi^{*}$. Then, \[\| \mu-\nu \|_{TV} \leq \|\pi_{q}(\pi_{q-M},I^{M}_{q})-\pi_{q}(\pi^{*},I^{M}_{q})\|_{TV}.\] 
\end{lemma}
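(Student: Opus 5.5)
The plan is to write both measures as mixtures of one and the same kernel against two different priors on $x_{qK}$, and then use that a Markov kernel cannot increase total variation distance. The key identity is the one already used in the proof of Theorem \ref{theorem8}:
\[
P\big(dy_{[qK,qK+K-1]} \mid \pi_{q-M}, I^{M}_{q}, a_{q}\big)=\int P\big(dy_{[qK,qK+K-1]} \mid x_{qK}, a_{q}\big)\,\pi_{q}(\pi_{q-M},I^{M}_{q})(dx_{qK}).
\]
Here the kernel $\kappa(\cdot\mid x):=P(dy_{[qK,qK+K-1]}\mid x_{qK}=x,a_q)$ is determined only by $\mathcal{T}$, the channels $Q^i$ and the deterministic maps in $a_q$. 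In particular it does not depend on the prior at time $(q-M)K$. The justification is the conditional independence structure of the dynamics: given $x_{qK}$ and the local policies $a_q$, the future measurements over the block $[qK,qK+K-1]$ are independent of the past common information. The past enters only through the conditional law of $x_{qK}$, which is obtained from the prior by the recursive Bayesian updates $\phi$ (the map $\tilde G$/$F$).

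\textbf{Step 1: the same representation for $\nu$.} Replacing $\pi_{q-M}$ by $\pi^*$ gives
\[
\nu=\int \kappa(\cdot\mid x)\,\pi_q(\pi^*,I^M_q)(dx).
\]
The assumption $\pi_{q-M}\ll\pi^*$ is used here to ensure the Bayesian updates starting from $\pi^*$ are well defined along the observed window, i.e.\ the normalizing denominators in (\ref{equation for F}) are positive $P$-a.s. Without it, $\pi_q(\pi^*,I^M_q)$ could be undefined on a set of positive probability under the true law. I would verify this by induction over the $MK$ updates. If $\pi_{q-M}\ll\pi^*$, then the $\pi_{q-M}$-a.s.\ positivity of $\int\prod_i h(x,y^i)\,Z(dx)$ transfers to the $\pi^*$-updated predictor. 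Moreover, absolute continuity is preserved by each update step, because both updates push forward through the same $\mathcal{T}$ and reweight by the same likelihood $\prod_i h$.

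\textbf{Step 2: contraction.} For any measurable $g$ with $\|g\|_\infty\le1$,
\[
\Big|\int g\,d\mu-\int g\,d\nu\Big|=\Big|\int \Big(\int g(y)\kappa(dy\mid x)\Big)\big(\pi_q(\pi_{q-M},I^M_q)-\pi_q(\pi^*,I^M_q)\big)(dx)\Big|.
\]
The inner function $x\mapsto\int g\,d\kappa(\cdot\mid x)$ is measurable and bounded by $1$. Taking the supremum over $g$ and using the variational form of $\|\cdot\|_{TV}$ from Section \ref{convergence of probability measures} gives the claimed bound.

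The main obstacle is not the contraction, which is routine, but Step 1. It requires making rigorous that $\mu$ and $\nu$ use the identical kernel $\kappa$ and differ only through the Bayesian-updated prior. The measurability of $\kappa$ in $x$ should be handled by writing it explicitly as the iterated integral of $Q^i$, $f^i_t$ and $\mathcal{T}$, as in the display defining $\theta$.
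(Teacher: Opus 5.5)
Your proposal is correct and follows the paper's proof. The paper also writes $\mu$ and $\nu$ as mixtures of the same block kernel $P(dy^{[1,N]}_{[qK,qK+K-1]}\mid x_{qK},a_q)$ against $\pi_q(\pi_{q-M},I^M_q)$ and $\pi_q(\pi^*,I^M_q)$, and then bounds the difference by the variational form of $\|\cdot\|_{TV}$, exactly as in your Step 2. Your Step 1 (Bayesian updates from $\pi^*$ are well defined and preserve absolute continuity along the window) is something the paper leaves implicit, and your argument for it is sound.
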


\begin{remark}
    Note that if the initial distribution on $x_{0}$ is absolutely continuous with respect to some measure $\zeta$ then the predictor at all time stages will be absolutely continuous with respect to $\zeta$. Hence, $\pi^{*}$ can be chosen without any knowledge of the true predictor by simply choosing $\pi^{*}$ so that $\zeta \ll \pi^{*}$. This will have implications for the near optimality of the sliding memory periodic information sharing pattern. 
\end{remark}

\noindent\textbf{Proof.} Let $f$ be a measurable function such that $\|f\|_{\infty}\leq 1$.
\begin{eqnarray*}
\nonumber    &&\bigg|\int f(y_{[qK,qK+K-1]}^{[1,N]}) d\mu-\int f(y_{[qK,qK+K-1]}^{[1,N]}) d\nu\bigg|\\
&&=\bigg|\int f(y_{[qK,qK+K-1]}^{[1,N]})P(dy_{[qK,qK+K-1]}^{[1,N]} \mid x_{qK}, a_{q})\\
 \nonumber   &&\quad \pi_{q}(\pi_{q-M},I^{M}_{q})(dx_{qK})\\
 && \quad-\int f(y_{[qK,qK+K-1]}^{[1,N]})P(dy_{[qK,qK+K-1]}^{[1,N]} \mid x_{qK},a_{q})\\
 &&\pi_{q}(\pi^{*},I^{M}_{q})(dx_{qK}) \bigg|\\
 \nonumber   &&= \bigg|\int\pi_{q}(\pi_{q-M},I^{M}_{q})(dx_{qK})\\
 &&\int  f(y_{[qK,qK+K-1]}^{[1,N]})P(dy_{[qK,qK+K-1]}^{[1,N]} \mid x_{qK}, a_{q})\\
\nonumber    &&\quad -\int\pi_{q}(\pi^{*},I^{M}_{q})(dx_{qK})\\
&&\int f(y_{[qK,qK+K-1]}^{[1,N]})P(dy_{[qK,qK+K-1]}^{[1,N]} \mid x_{qK},a_{q}) \bigg|\\
\nonumber    &&\leq \|\pi_{q}-\pi_{q}^{*}  \|_{TV}  
\end{eqnarray*}

\qed 
\begin{assumption} \label{WF sliding window MDP}
    The transition kernel $\tilde{\theta}$ as defined by (\ref{tilde theota}) is weak-Feller.
\end{assumption}
The following gives sufficient conditions for Assumption \ref{WF sliding window MDP} to hold.
\begin{proposition} \label{prop tilde theta WF}
    Suppose that $\mathbb{X}$ is compact. Additionally, suppose the transition kernel $\mathcal{T}\left(x_{t+1}. \mid x_{t}=x, u_{t}=u\right)$ is continuous under total variation in $x$ and $u$, and $h(x, y)$ is continuous in $x$. Then $\tilde{\theta}$ is weak-Feller.
\end{proposition}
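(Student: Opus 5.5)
To prove that $\tilde{\theta}$ is weak-Feller, I will show directly that for any $g\in C_{b}$ of the state $\hat Z_{q+1}=(\pi_{q+1-M},I^{M}_{q+1})$, the map $(\hat Z_q,a_q)\mapsto \int g\,d\tilde\theta(\cdot|\hat Z_q,a_q)$ is continuous. From the proof of Theorem \ref{theorem8},
\[
\int g\, d\tilde{\theta}(\cdot\mid \hat Z_q,a_q)=\int g\big(\tilde G(\pi_{q-M},I^M_q),\,(y_{[(q-M+1)K,qK-1]},a_{[q-M+1,q-1]},a_q,y_{[qK,qK+K-1]})\big)\,P(dy_{[qK,qK+K-1]}\mid x_{qK},a_q)\,\pi_q(\pi_{q-M},I^M_q)(dx_{qK}).
\]
So the kernel is built from three ingredients:
\begin{itemize}
\item the deterministic update $\tilde G$, which produces $\pi_{q+1-M}$;
\item the shift of the window, which is trivially continuous;
\item the $K$-step observation law $P(dy\mid x,a_q)$ integrated against $\pi_q=\phi(\hat Z_q)$.
\end{itemize}
I take a sequence $(\hat Z_{q,n},a_{q,n})\to(\hat Z_q,a_q)$, with $\pi_{q-M,n}\to\pi_{q-M}$ weakly, $y$-coordinates converging in $\mathbb{Y}$, and map-valued actions converging in the topology of Definition \ref{top k-step}.

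\textbf{Step 1: continuity of the deterministic maps.} I will show that the $K$-step predictor update $G(\pi,u_{[\cdot]},y_{[\cdot]})$ is continuous in $(\pi,y)$ for fixed finite actions, and that $\pi_q=\phi(\hat Z_q)$ is continuous as an $M$-fold composition of such updates. For one step $F$ in (\ref{equation for F}), let $\pi_n\to\pi$ weakly and $y_n\to y$. Continuity of $h$ in $x$, compactness of $\mathbb{X}$, and the total-variation continuity of $\mathcal{T}$ (which gives weak continuity of $x\mapsto\int g\,d\mathcal{T}(\cdot|x,u)$) together yield convergence of the numerator and denominator. The denominator must be positive, which holds on the relevant support. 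I also need joint continuity of $h$ in $(x,y)$, or else I need to argue only $\psi$-a.e.

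I expect the main obstacle here. The update depends on the actions only through the realized $u$, which are generated by the maps $a_{q-M},\dots,a_{q-1}$ applied to $y$. These are only measurable and converge only in the Young sense, so $u_{n}$ need not converge pointwise. My first approach is to use Remark \ref{remark 5.1}: represent $I^M_q$ by $(y,u)$ with $u$ in the finite set $\mathbb{U}$, so that $u$ enters discretely. Convergence of the window variables then forces eventual equality of the $u$-coordinates, and the continuity of $G$ only needs to be checked in $(\pi,y)$.

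\textbf{Step 2: the integral against $P(dy\mid x,a_q)\,\pi_q(dx)$.} This is where the Young topology on $a_q$ genuinely matters. I will reuse the argument behind Theorem \ref{theorem6} (Theorem 5.4 of \cite{OMZSY-centralized-reduction}), which establishes precisely that
\[
(\pi,a)\mapsto\int \tilde g(y,u)\,P(dy,du\mid x,a)\,\pi(dx)
\]
is continuous for test functions $\tilde g$ that are bounded and continuous in $y$. That argument writes the observation densities $h$ against the reference measure $\psi$, uses total-variation continuity of $\mathcal{T}$ to absorb the intermediate state integrals, and applies the Young-convergence characterisation with functions that are only measurable in $u$ (the Remark following Definition \ref{topology for actions onestep}).

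Our integrand is $\tilde g_n(y)=g(\tilde G(\pi_{q-M,n},I^M_{q,n}),\dots,y)$. It depends on $n$, but by Step 1 it converges to $\tilde g(y)$ continuously in $y$ and is uniformly bounded. I therefore split
\[
\Big|\int \tilde g_n\,d\mu_n-\int \tilde g\,d\mu\Big|\le\Big|\int(\tilde g_n-\tilde g)\,d\mu_n\Big|+\Big|\int\tilde g\,d\mu_n-\int\tilde g\,d\mu\Big|.
\]
The second term vanishes by the Theorem \ref{theorem6} argument, with $\pi_{q,n}\to\pi_q$ from Step 1. For the first term I use that the $y$-marginals of $\mu_n$ have densities $\prod h$ with respect to $\psi^{\otimes}$ that are uniformly bounded on compact $\mathbb{X}$; I assume $h$ is bounded, or use the uniform integrability that $h$ continuous on compact $\mathbb{X}$ provides. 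Dominated convergence then applies under the fixed measure $\psi^{\otimes}$, and the first term tends to $0$.

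Combining the two steps gives continuity of $\int g\,d\tilde\theta$, and hence Assumption \ref{WF sliding window MDP}.
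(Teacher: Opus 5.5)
Your skeleton matches the paper's. Both arguments prove continuity of the deterministic maps $\tilde G$ and $\phi$, then continuity in $(\pi_q,a_q)$ of the law of $(x_{qK},\mathbf{y}_{[qK,qK+K-1]})$, then combine. The paper combines with the generalized dominated convergence theorem, applied twice, and finishes with a bounded-Lipschitz/total-variation reduction to $x^n\to x$. You instead split off $\int(\tilde g_n-\tilde g)\,d\mu_n$ and use uniformly integrable $\psi^{\otimes}$-densities, which can be justified via Scheff\'e and compactness of $\mathbb{X}$; that variation is harmless.

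\textbf{First gap: regularity of $h$ in $y$.} The real problem is Step 1, exactly where the paper writes ``continuity of $\bar G$ follows from continuity of $G$''. You spot both obstructions but leave them as open alternatives, and neither can be closed under the stated hypotheses. Continuity of the update in the \emph{window} measurements needs $h$ continuous in $y$. The ``$\psi$-a.e.'' fallback cannot work: those measurements are state coordinates, and weak-Feller means continuity at every state point. Concretely, take one agent, $\mathbb{X}=\{0,1\}$, $\mathbb{Y}=[0,1]$, $\psi$ Lebesgue, $h(0,y)=2\cdot\mathbbm{1}_{\{y<1/2\}}$, $h(1,y)=2\cdot\mathbbm{1}_{\{y\ge 1/2\}}$, $\mathcal{T}(\cdot\mid x,u)=\delta_x$ and $K=M=1$. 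From $\pi_{q-1}=(\frac12,\frac12)$ one gets $\pi_q=\delta_0$ if $y_{q-1}<1/2$ and $\pi_q=\delta_1$ otherwise. Hence $g(\hat Z_{q+1})=\pi_q(\{0\})$ has a discontinuous integral against $\tilde\theta$, although every stated hypothesis holds. You must therefore add joint continuity of $h$ as a hypothesis. You also need positivity of the normalizing constants, or a restriction of the state space, in place of ``holds on the relevant support''.

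\textbf{Second gap: maps in the state.} With $a_{q-M},\dots,a_{q-1}$ in the state, $\tilde G$ and $\phi$ evaluate these maps at the realized window measurements. Neither $a\mapsto a(y)$ (Young topology, nonatomic $\psi$) nor $y\mapsto a(y)$ ($a$ merely measurable) is continuous. For example, with $K=1$, $\mathbb{U}=\mathbb{X}=\{0,1\}$, $\mathcal{T}(\cdot\mid x,u)=\delta_u$ and $h\equiv 1$, one gets $\pi_{q+1-M}=\delta_{a_{q-M}(y_{q-M})}$. So $\tilde\theta$ as defined via Theorem~\ref{theorem8} is not weak-Feller in general, whatever the regularity of $h$. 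Your switch to realized actions (the variant in the remark following the proof of Theorem~\ref{theorem8}) is the right repair. However, you would then be proving weak-Feller for a different kernel, and you should say so explicitly.

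\textbf{Consistency of Step 2.} Step 2 as written still uses the map-valued next state, so it must be rewritten to match. The new coordinates are $(\mathbf{y}_{[qK,qK+K-1]},\mathbf{u}_{[qK,qK+K-1]})$, with $\mathbf{u}$ generated by $a_q$. The integrand becomes $\tilde g_n(\mathbf{y},\mathbf{u})$, and the measure becomes $P(d\mathbf{y},d\mathbf{u}\mid x,a_q)\,\pi_q(dx)$. This is exactly what the Young-topology argument behind Theorem~\ref{theorem6} handles, since $\mathbb{U}$ is finite.

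With these changes the argument goes through. Alternatively, with finite $\mathbb{Y}$ and $\psi$ charging every point (the setting of QSWPSP), both issues disappear. As written, neither your proposal nor the paper's appendix proof establishes the proposition in its stated generality.
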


Please see the Appendix for a proof. Let $(\hat{Z}^{*}_{q},a_{q})=((\pi^{*},I_{q}^{M}),a_{q})$ be an approximation of $(\hat{Z}_{q},a_{q})$ that always uses $\pi^{*}$ as the predictor. Here, again, $\pi^{*}\in \mathcal{P}(\mathbb{X})$ and at any time stage $q$ we have that the predictor for $(\hat{Z}_{q},a_{q})$ satisfies: $\pi_{q-M}\ll \pi^{*}$. We denote the kernel of $(\hat{Z}^{*}_{q},a_{q})$ by $\hat{\theta}$. Additionally, we endow the latter with the same cost functional $r$ and we denote its optimal value function with initialization $\hat{Z}^{*}=(\pi^{*},I^{M}_{0})$ as $J_{W}(\hat{Z}^{*})$. Additionally, the value function for $(\hat{Z}_{q},a_{q})$ with initialization $\hat{Z}=(\pi,I^{M}_{0})$ is denoted by $J(\hat{Z})$. Note that here it is assumed that at time $q=0$ the coordinators in both problems have access to the last $MK$ measurement and actions in the form of $I^{M}_{0}$ as well there respective predictors for the state at time $t=-MK$. 
\begin{theorem} \label{boundonperformance}
    Under Assumption \ref{WF sliding window MDP}, any K-step periodic information sharing pattern problem can be approximated by a sliding memory periodic information sharing pattern problem assuming the following predictor stability condition holds:
    \begin{align} \label{L_q}
    \nonumber    &L_{q}=\sup_{\gamma} E_{\pi}^{\gamma} [\| \pi_{q}(\pi_{q-M},y^{[1,N]}_{[(q-M)K,qK-1]}, u^{[1,N]}_{[(q-M)K,qK-1]}) \\
        &\qquad -\pi_{q}(\pi^*,y^{[1,N]}_{[(q-M)K,qK-1]}, u^{[1,N]}_{[(q-M)K,qK-1]})\|_{TV} ] \rightarrow 0 \text{,}
    \end{align}
    as $M\rightarrow \infty$ for all $q$.
\end{theorem}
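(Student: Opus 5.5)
The approach is the standard approximate-MDP argument used for finite-memory approximations of centralized POMDPs. The true sliding-memory MDP $(\hat{Z}_{q},a_{q})$ with kernel $\tilde{\theta}$ (equivalent to the KSPISP problem by Theorem \ref{theorem8}) is compared with the approximate MDP $(\hat{Z}^{*}_{q},a_{q})$ with kernel $\hat{\theta}$, which plugs in the fixed $\pi^{*}$. The two share the same window component $I^{M}_{q}$ and differ only in the predictor used to generate the next block of measurements. Since the window $I^{M}_{q+1}$ is a deterministic shift of $(I^{M}_{q},a_{q})$ together with the new block $y_{[qK,qK+K-1]}^{[1,N]}$, both kernels are push-forwards of the block-measurement laws $\mu$ and $\nu$ of Lemma \ref{lemma for k-sliding window} under the same measurable map (restricted to the $I^{M}$ coordinate). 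Hence, for each $(I^{M}_{q},a_{q})$,
\[\|\tilde{\theta}(\cdot\mid \hat Z_q,a_q)-\hat{\theta}(\cdot\mid \hat Z^*_q,a_q)\|_{TV}\big|_{I^M}\le \|\pi_{q}(\pi_{q-M},I^{M}_{q})-\pi_{q}(\pi^{*},I^{M}_{q})\|_{TV},\]
and the stage costs obey $|r(\hat Z_q,a_q)-r(\hat Z^*_q,a_q)|\le \frac{1-\beta^{K}}{1-\beta}\|c\|_\infty\|\pi_{q}(\pi_{q-M},I^M_q)-\pi_{q}(\pi^{*},I^M_q)\|_{TV}$ by the same computation as in Lemma \ref{lemma for k-sliding window}.

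Next comes existence. Under Assumption \ref{WF sliding window MDP}, together with compactness of the action set $\mathbb{A}$ (finite $\mathbb{U}^i$ and the Young-type topology of Definition \ref{top k-step}) and continuity and boundedness of $r$, a measurable selection argument shows that the approximate model admits an optimal stationary policy $\hat\gamma$ that depends only on $I^{M}_{q}$ (with $\pi^*$ fixed), i.e. a sliding-window policy. Its value is $J_W$, and the candidate near-optimal policy for the true problem is $\hat\gamma$ itself, which is implementable under the sliding memory pattern.

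The main estimate splits the loss as
\[J(\hat Z,\hat\gamma)-J^*(\hat Z)\le |J(\hat Z,\hat\gamma)-J_W(\hat Z^*)|+|J_W(\hat Z^*)-J^*(\hat Z)|.\]
Each term is bounded by a telescoping argument over periods. Run the true process under a policy $\gamma$, which is either $\hat\gamma$ or an optimal true policy, the latter made a function of the window by conditioning. At each period $q$, swap the true one-step cost and kernel for the approximate ones and use the approximate value function (bounded by $\|c\|_\infty/(1-\beta)$) as the continuation. This yields
\[|J^\gamma(\hat Z)-J_W^{\gamma}(\hat Z^*)|\le C\sum_{q\ge0}\beta^{Kq}E^\gamma_\pi\big[\|\pi_{q}(\pi_{q-M},I^M_q)-\pi_q(\pi^*,I^M_q)\|_{TV}\big]\le C\sum_q\beta^{Kq}L_q,\]
with $C=\frac{\|c\|_\infty}{1-\beta}\big(1+\frac{\beta^K}{1-\beta^K}\big)$. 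For the comparison of optimal values, compare $J^*$ and $J_W$ via Bellman operators applied to the approximate value function along the true process, using $\hat\gamma$ in one direction and the true optimal policy in the other. Since each $L_q\le 2$ and $L_q\to0$ as $M\to\infty$ for every $q$, dominated convergence over the geometric sum gives that the total loss tends to $0$ as $M\to\infty$.

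The hardest step will be the telescoping bound. It requires the approximate value function to be evaluated at $(\pi^*,I^M_{q+1})$ while integrating against the true law, so only the $I^M$ marginal may matter. This is fine because $J_W$ depends on $\hat Z^*$ only through $I^M$. The difference $E[g\mid\text{true}]-E[g\mid\text{approx}]$ for bounded measurable $g$ then reduces exactly to the total variation bound of Lemma \ref{lemma for k-sliding window}, so only measurability of $J_W$ is needed, not continuity. A second point to check is that the supremum over $\gamma$ in $L_q$ covers the window-dependent policies used. This holds, and the absolute continuity $\pi_{q-M}\ll\pi^*$ assumed in Lemma \ref{lemma for k-sliding window} keeps the Bayesian updates from $\pi^*$ well defined along true trajectories.
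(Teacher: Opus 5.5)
Your proposal is correct and follows essentially the paper's route: Lemma \ref{lemma for k-sliding window} turns the discrepancy in the next block of measurements (and in the stage cost) into the predictor discrepancy, and unrolling the Bellman recursion gives a bound of order $\frac{\|c\|_\infty}{1-\beta}\sum_{q}\tilde{\beta}^{q}L_q$, which vanishes as $M\to\infty$ by dominated convergence. There are two differences. First, the paper iterates the symmetric bound $|\min_a A(a)-\min_a B(a)|\le \max_a|A(a)-B(a)|$, whereas you use one-sided Bellman inequalities along $\hat\gamma$ and the true optimal policy; in the latter direction your remark about making that policy window-dependent ``by conditioning'' is unnecessary and would in fact change its value. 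Second, your version also gives the policy near-optimality that the paper states separately in the next theorem.
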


We note that in \ref{boundonperformance} the supremum is over all stationary Markovian policies which are admissible for the MDP $(\hat{Z}_{q},a_{q})$ introduced in Theorem \ref{theorem8}.

\noindent\textbf{Proof.} 
Let $\tilde{\beta} = \beta^K$. We write 
\begin{eqnarray*}
\nonumber J(\hat{Z})&=&\min_{a} r(\hat{Z},a)+\tilde{\beta} \int J(\hat{Z}_{1})\tilde{\theta}(\hat{Z}_{1}|\hat{Z},a)\\
\nonumber J_{W}(\hat{Z}^{*})&=&\min_{a} r(\hat{Z}^{*},a)+\tilde{\beta} \int J_{W}(\hat{Z}_{1})\hat{\theta}(\hat{Z}_{1}|\pi^{*},I_{0}^{M},a)
\end{eqnarray*}
Then,
\begin{align*}
   &|J(\hat{Z})-J_{W}(\hat{Z}^{*})|\leq\max_{a}|r(\hat{Z},a)-r(\hat{Z}^{*},a)|\\
 &\quad \quad + \tilde{\beta}\max_{a}|E[J_{W}(\hat{Z}_{1})|(\pi^{*}),I_{0}^{M},a]\\
 & \quad \quad -E[J_{W}(\hat{Z}_{1})|(\pi),I_{0}^{M},a]|\\
   &\quad  \quad +\tilde{\beta} \max_{a}|E[J_{W}(\hat{Z}_{1})|(\pi),I_{0}^{M},a]\\
 & \quad \quad -E[J(\hat{Z}_{1})|\pi,I_{0}^{M},a]|
\end{align*}
We have that for all $a$
\begin{eqnarray*}
\nonumber        &&|r(\hat{Z},a)-r(\hat{Z}^{*},a)|=\bigg|\int \pi_{0}(\pi,I^{M}_{0})(dx_{0})\\
&&\int \sum_{t=0}^{t=K-1} \beta^{t} (\prod\limits_{i=1}^{N}Q^{i}(dy^{i}_{0}|x_{0})f^{i}_{0}(du^{i}_{0}|y^{i}_{0}))\\
 \nonumber       &&\quad\mathcal{T}(dx_{1}|x_{0},\mathbf{u}_{0})...\mathcal{T}(dx_{t}|x_{t-1},\mathbf{u}_{t-1})\\
 &&(\prod\limits_{i=1}^{N}Q^{i}(dy^{i}_{t}|x_{t-1})f^{i}_{t}(du^{i}_{t }|y^{i}_{[0,r]},u^{i}_{[0,r-1]}))c(x_{t},\mathbf{u}_{t})\\
\nonumber        &&\quad -\int \pi_{0}(\pi^{*},I^{M}_{0})(dx_{0})\\
&&\int \sum_{t=0}^{t=K-1} \beta^{t} (\prod\limits_{i=1}^{N}Q^{i}(dy^{i}_{0}|x_{0})f^{i}_{0}(du^{i}_{0}|y^{i}_{0})) \mathcal{T}(dx_{1}|x_{0},\mathbf{u}_{0})\\
  \nonumber      &&\quad ... \mathcal{T}(dx_{t}|x_{t-1},\mathbf{u}_{t-1})(\prod\limits_{i=1}^{N}Q^{i}(dy^{i}_{t}|x_{t-1})\\
  &&f^{i}_{t}(du^{i}_{t }|y^{i}_{[0,r]},u^{i}_{[0,r-1]}))c(x_{t},\mathbf{u}_{t})\bigg| \\
 \nonumber       &&\leq \frac{\|c\|_{\infty}}{1-\beta}\| \pi_{0}(\pi,y^{[1,N]}_{[-MK,-1]}, u^{[1,N]}_{[-MK,-1]})\\
 && \quad \quad  \quad \quad -\pi_{0}(\pi^*,y^{[1,N]}_{[-MK,-1]}, u^{[1,N]}_{[-MK,-1]})\|_{TV} 
\end{eqnarray*}
Similarly, we have that 
\begin{eqnarray*}
\nonumber    &&|E[J_{W}(\hat{Z}_{1})|(\pi^{*}),I_{0}^{M},a]-E[J_{W}(\hat{Z}_{1})|(\pi),I_{0}^{M},a]| \\
\nonumber    &&\leq \|J_{W}\|_{\infty} E\bigg [\|P(dy_{[0,K-1]}\mid \pi^{*}, I_{0}^{M}, a)\\
&& \qquad \qquad \qquad \qquad -P(dy_{[0,K-1]}\mid \pi, I_{0}^{M}, a)\|_{TV} \bigg]\\
\nonumber    &&\leq \frac{\|c\|_{\infty}}{1-\beta}E\bigg[\| \pi_{0}(\pi,y^{[1,N]}_{[-MK,-1]}, u^{[1,N]}_{[-MK,-1]}) \\
&& \qquad \qquad \qquad   -\pi_{0}(\pi^*,y^{[1,N]}_{[-MK,-1]}, u^{[1,N]}_{[-MK,-1]})\|_{TV}\bigg] 
\end{eqnarray*}
where the last inequality follows by Lemma \ref{lemma for k-sliding window}. Moreover,  
\begin{eqnarray*}
\nonumber &&|E[J_{W}(\hat{Z}_{1})|\pi,I_{0}^{M},a]-E[J(\hat{Z}_{1})|\pi,I_{0}^{M},a]|\\
&&\leq  E\bigg[|J_{W}(\hat{Z}_{1}^{*})-J(\hat{Z}_{1})| \bigg| \pi,I_{0}^{M},a\bigg ]   
\end{eqnarray*}
Thus we get that
\begin{eqnarray*}
\nonumber    &&|J(\hat{Z})-J_{W}(\hat{Z}^{*})|\leq L_{0}(\frac{\|c\|_{\infty}}{1-\beta}+\tilde{\beta}\frac{\|c\|_{\infty}}{1-\beta})\\
&& \quad +\tilde{\beta}\max_{a}E[|J_{W}(\hat{Z}_{1}^{*})-J(\hat{Z}_{1})|\mid \pi,I_{0}^{M},a]\\
&& \leq\frac{\|c\|_{\infty}}{1-\beta}\sum_{q=0}^{\infty}\tilde{\beta}^{q}L_{q}
\end{eqnarray*}
Where the last inequality follows by repeating the same process. \qed

Next we present a robustness result which shows that the optimal policy $\gamma^{*}$ for the MDP $(\hat{Z}^{*}_{q},a_{q})$ is near optimal for the MDP $\hat{Z}_{q},a_{q}$. In particular, the policy is applied as follows: for any $\hat{Z}_{q}$, $\gamma^{*}(\hat{Z}_{q})=\gamma^{*}(\hat{Z}^{*}_{q})$
\begin{theorem}
Let $J(\hat{Z}_{q},\gamma^{*})$ denote the performance attained the policy $\gamma^{*}$ is applied to the MDP $\hat{Z}_{q},a_{q}$. Then,
\begin{align*}
     &E^{\gamma^{*}}\bigg[ J(\hat{Z}_{0},\gamma^{*})-J(\hat{Z}_{0})\bigg ]\leq 2 \frac{\|c\|_{\infty}}{1-\beta}\sum_{q=0}^{\infty}\tilde{\beta}^{q}L_{q}
\end{align*}
\end{theorem}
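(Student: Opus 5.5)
We split the gap into two pieces through the approximate value function and bound each by the quantity already controlled in Theorem \ref{boundonperformance}:
\[
J(\hat{Z}_{0},\gamma^{*})-J(\hat{Z}_{0}) = \big(J(\hat{Z}_{0},\gamma^{*})-J_{W}(\hat{Z}^{*}_{0})\big)+\big(J_{W}(\hat{Z}^{*}_{0})-J(\hat{Z}_{0})\big).
\]
The second term is handled directly by Theorem \ref{boundonperformance}. It gives $|J_{W}(\hat{Z}^{*}_{0})-J(\hat{Z}_{0})|\leq \frac{\|c\|_{\infty}}{1-\beta}\sum_{q}\tilde{\beta}^{q}L_{q}$, so it contributes one copy of the bound.

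For the first term we run the same recursive comparison as in that proof, with a fixed policy in place of the minimization. Since $\gamma^{*}$ is optimal for $(\hat{Z}^{*}_{q},a_{q})$, it satisfies the Bellman equation there with action $a^{*}=\gamma^{*}(\hat{Z}^{*})$:
\[
J_{W}(\hat{Z}^{*})=r(\hat{Z}^{*},a^{*})+\tilde{\beta}\int J_{W}\,d\hat{\theta}(\cdot|\hat{Z}^{*},a^{*}).
\]
Applied in the true model under the rule $\gamma^{*}(\hat{Z}_{q})=\gamma^{*}(\hat{Z}^{*}_{q})$, the same action gives the fixed-policy equation
\[
J(\hat{Z},\gamma^{*})=r(\hat{Z},a^{*})+\tilde{\beta}\int J(\cdot,\gamma^{*})\,d\tilde{\theta}(\cdot|\hat{Z},a^{*}).
\]
Subtracting, we get three pieces, exactly as in the proof of Theorem \ref{boundonperformance}:
\begin{itemize}
\item the cost difference $|r(\hat{Z},a^{*})-r(\hat{Z}^{*},a^{*})|$;
\item the change of the next-stage measurement law with $J_{W}$ fixed;
\item the propagated error $\tilde{\beta}E[|J(\hat{Z}_{1},\gamma^{*})-J_{W}(\hat{Z}^{*}_{1})|]$.
\end{itemize}
The first two are bounded by $\frac{\|c\|_{\infty}}{1-\beta}\|\pi_{0}(\pi,I^{M}_{0})-\pi_{0}(\pi^{*},I^{M}_{0})\|_{TV}$. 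For the second, this uses Lemma \ref{lemma for k-sliding window}, the bound $\|J_{W}\|_{\infty}\leq \|c\|_{\infty}/(1-\beta)$, and the fact that $\hat{Z}_{1}$ and $\hat{Z}^{*}_{1}$ share the window $I^{M}_{1}$, which is a measurable function of the common measurement realisation.

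Iterating over $q$ with expectations taken under $\gamma^{*}$ in the true model then gives a discounted sum of $E^{\gamma^{*}}[\|\pi_{q}(\pi_{q-M},I^{M}_{q})-\pi_{q}(\pi^{*},I^{M}_{q})\|_{TV}]$. Each such term is at most $L_{q}$, because $L_{q}$ is a supremum over admissible stationary policies. The policy $\hat{Z}_{q}\mapsto\gamma^{*}(\pi^{*},I^{M}_{q})$ is admissible and stationary for $(\hat{Z}_{q},a_{q})$: it is measurable, since it depends only on the window. Summing the two pieces yields the factor $2$.

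\textbf{Main obstacle.} The delicate step is the per-step accounting in the iteration. The constants in front of $L_{q}$ must collapse to $\frac{\|c\|_{\infty}}{1-\beta}$ per stage, as in Theorem \ref{boundonperformance}, which reuses the same slightly loose bookkeeping; absorbing the $(1+\tilde{\beta})$ factor may require a mild adjustment of constants. One must also justify that expectations of the window-conditioned TV gap along the trajectory are dominated by $L_{q}$. That needs the trajectory to be generated by a Markov stationary policy of $(\hat{Z}_{q},a_{q})$, which is why the way $\gamma^{*}$ is applied matters.
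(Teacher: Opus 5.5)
Your proposal is correct and matches the paper's proof sketch essentially step for step: split through $J_{W}(\hat{Z}^{*}_{0})$, invoke Theorem \ref{boundonperformance} for one half, and handle the other half by the fixed-policy Bellman recursion (cost difference, change of measurement law via Lemma \ref{lemma for k-sliding window}, propagated error), with $E^{\gamma^{*}}[\cdot]$ dominated by $L_{q}$ because $\hat{Z}_{q}\mapsto\gamma^{*}(\pi^{*},I^{M}_{q})$ is stationary Markov. The constant issue you flag goes away if you use $\|r\|_{\infty}\leq\frac{1-\beta^{K}}{1-\beta}\|c\|_{\infty}$ together with $\|J_{W}\|_{\infty}\leq\frac{\|c\|_{\infty}}{1-\beta}$: the cost and kernel terms then combine to exactly $\big(\frac{1-\beta^{K}}{1-\beta}+\frac{\beta^{K}}{1-\beta}\big)\|c\|_{\infty}=\frac{\|c\|_{\infty}}{1-\beta}$ times the per-stage TV gap, so no adjustment of constants is needed.
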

\noindent\textbf{Proof sketch.} We have that \begin{align*}
    & E^{\gamma^{*}}\bigg[ J(\hat{Z}_{0},\gamma^{*})-J(\hat{Z}_{0})\bigg ]\leq  E^{\gamma^{*}}\bigg[ J(\hat{Z}_{0},\gamma^{*})-J_{W}(\hat{Z}^{*}_{0})\bigg] \\
     & \quad + E^{\gamma^{*}}\bigg[ J_{W}(\hat{Z}^{*}_{0})- J(\hat{Z}_{0})) \bigg ]
\end{align*}
Thus, by Theorem \ref{boundonperformance}, it is sufficient to show that $E^{\gamma^{*}}\bigg[ J(\hat{Z}_{0},\gamma^{*})- J_{W}(\hat{Z}^{*}_{0})\bigg] \leq \frac{\|c\|_{\infty}}{1-\beta}\sum_{q=0}^{\infty}\tilde{\beta}^{q}L_{q}$. We have that 
\begin{align*}
    &E^{\gamma^{*}}[ J(\hat{Z}_{0},\gamma^{*})]=r(\hat{Z}_{0},\gamma^{*}(\hat{Z}^{*}_{0}))+\tilde{\beta}\int J(\hat{Z}_{1},\gamma^{*})\tilde{\theta}(d\hat{Z}_{1}|\hat{Z},a) 
\end{align*}
Hence, 
\begin{align*}
   & E^{\gamma^{*}}\bigg[ J(\hat{Z}_{0},\gamma^{*})-J_{W}(\hat{Z}^{*}_{0})\bigg ] \leq \max_{a}|r(\hat{Z}_{0},a)-r(\hat{Z}^{*}_{0},a)|\\
 &\quad \quad + \tilde{\beta}\max_{a}|E^{\gamma^{*}}[J_{W}(\hat{Z}_{1})|(\pi^{*}),I_{0}^{M},a]\\
 & \qquad \qquad -E^{\gamma^{*}}[J_{W}(\hat{Z}_{1})|(\pi),I_{0}^{M},a]|\\
   &\quad  \quad +\tilde{\beta} \max_{a}|E^{\gamma^{*}}[J_{W}(\hat{Z}_{1})|(\pi),I_{0}^{M},a]\\
 & \qquad \qquad -E^{\gamma^{*}}[J(\hat{Z}_{1})|\pi,I_{0}^{M},a]|\\
 &\leq L_{0}(\frac{\|c\|_{\infty}}{1-\beta} +\tilde{\beta}\|J_{W}\|_{\infty})+ \tilde{\beta}\max_{a}|E^{\gamma^{*}}[J_{W}(\hat{Z}_{1})|(\pi),I_{0}^{M},a]\\
 & \quad \quad -E^{\gamma^{*}}[J(\hat{Z}_{1})|\pi,I_{0}^{M},a]|\\
 &\leq \frac{\|c\|_{\infty}}{1-\beta} \sum_{q=0}^{\infty}\tilde{\beta}^{q}L_{q}
\end{align*}





\subsection{Predictor stability under expected total variation} \label{predictor stablity}
In this section, we establish exponential predictor stability in expected total variation for the OSDISP problem and the KSPISP problem using Dobrushin coefficients. A similar result has been established for centralized POMDPs in \cite{mcdonald2020exponential}. Both predictor and filter stability play an important role in establishing robustness to an incorrectly designed policy (\cite{mcdonaldyuksel2022robustness}). In particular, it plays a crucial role in establishing the near optimality of finite window policies for POMDPs (\cite{kara2021convergence}). Similarly, as was seen in Section \ref{Finite memory approximations} predictor stability for decentralized problems plays a crucial role in demonstrating the near optimality of policies for which the common information consists of a sliding window of information.

 \begin{definition}
\cite{dobrushin1956central}: For a kernel operator $K:S_{1}\rightarrow \mathcal{P}(S_{2})$ we define the Dobrushin coefficient as: 
\begin{equation} \label{dobrushin coefficient}
    \delta(K)=\inf \sum_{i=1}^{n} \min(K(x,A_{i}),K(y,A_{i}))
\end{equation}
where the infimum is over all $x,y$ $\in S_{1}$ and all partitions $\{A_{i}\}_{i=1}^{n}$ of $S_2$. An equivalent and often useful definition of the Dobrushin coefficient is $\delta(K)=1-\alpha(K)$ where, 
\begin{equation*}
    \alpha(K)=\sup_{\mu \not = \nu} \frac{\|K(\mu)-K(\nu)\|_{TV}}{\|\mu-\nu\|_{TV}}
\end{equation*}
\end{definition}
A useful property, to be utilized, of the Dobrushin coefficient is that for any two probability measures $\mu,\nu$ $\in \mathcal{P}(S_{2})$: 
\begin{eqnarray*}
    \|K(\mu)-K(\nu)\|_{TV} \leq (1-\delta(K))\|\mu-\nu\|_{TV}
\end{eqnarray*}

\begin{theorem} \label{predictor stability one-step delayed theorem}
     Consider a true prior $\mu$ and a false prior $\nu$ with $\mu \ll \nu$. Then, the predictor is exponentially stable in total variation in expectation for the OSDISP AND KSPISP if the following holds: 
     \begin{eqnarray} \label{condition for predictor stability}
     (2-\delta(Q))(1-\tilde{\delta}(\mathcal{T}))<1
      \end{eqnarray}
     where, $Q(.|x_{t})=P(y_{t}^{[1,N]}\in.|x_{t})$, $\delta(Q)$ is the Dobrushin coefficient of the kernel $Q$, and $\tilde{\delta}(\mathcal{T})$ is the infimum of the Dobrushin coefficient of the kernel $\mathcal{T}(.|.,\mathbf{u})$ where the infimum is taken over all actions.
\end{theorem}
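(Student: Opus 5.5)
The argument adapts the centralized POMDP argument of \cite{mcdonald2020exponential} to the joint measurement kernel $Q(\cdot|x_t)=P(y^{[1,N]}_t\in\cdot|x_t)$. We first treat the OSDISP case and then obtain the KSPISP case by subsampling at times $qK$.

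Fix an admissible policy and write $Z^{\mu}_{t}$ and $Z^{\nu}_{t}$ for the predictors under the true prior $\mu$ and the false prior $\nu$. Both are driven by the same realized common information $(y^{[1,N]}_{[0,t-1]},u^{[1,N]}_{[0,t-1]})$, which is generated under $\mu$. By (\ref{equation for F}), one step of the recursion splits into two operations.

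\emph{Bayes step.} Condition $Z_t$ on $y_t^{[1,N]}$ through the likelihood $\prod_i h(x_t,y^i_t)$.

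\emph{Prediction step.} Push the result forward through $\mathcal{T}(\cdot|x_t,\mathbf{u}_t)$, where $\mathbf{u}_t$ is generated from $y_t^{[1,N]}$ by the local maps $f_t$.

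The prediction step is a Markov kernel for fixed $\mathbf{u}_t$. By the Dobrushin contraction property stated above,
\[
\|Z^{\mu}_{t+1}-Z^{\nu}_{t+1}\|_{TV}\le (1-\tilde{\delta}(\mathcal{T}))\,\|\phi^{\mu}_t-\phi^{\nu}_t\|_{TV},
\]
where $\phi_t$ denotes the filter after the Bayes step. Taking the infimum over actions in $\tilde\delta$ makes this hold whatever $\mathbf{u}_t$ the policy selects. It remains to control the Bayes step in expectation over $y_t$ drawn from the true predictive law $\int Q(\cdot|x)Z^{\mu}_t(dx)$. Here I would invoke the key lemma of \cite{mcdonald2020exponential}:
\[
E\big[\|\phi^{\mu}_t-\phi^{\nu}_t\|_{TV}\,\big|\,\text{past}\big]\le (2-\delta(Q))\,\|Z^{\mu}_t-Z^{\nu}_t\|_{TV}.
\]
This requires $Z^\mu_t\ll Z^\nu_t$, which propagates from $\mu\ll\nu$ because both predictors use the same likelihood and kernel.

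The argument for this lemma goes as follows. Write $\phi^{\mu}-\phi^{\nu}$ as a normalized difference and split it into a term involving $Z^\mu-Z^\nu$ weighted by the likelihood, plus a normalization mismatch term. Integrating the first term against the true marginal of $y$ gives at most $\|Z^\mu-Z^\nu\|_{TV}$. The second term is bounded by the TV distance between the two predictive measurement laws $QZ^\mu$ and $QZ^\nu$, which is at most $(1-\delta(Q))\|Z^\mu-Z^\nu\|_{TV}$. The decentralized structure enters only because $\mathbf{u}_t$ depends on $y_t$ through $f_t$. Since the prediction-step bound is uniform in $\mathbf{u}_t$, this dependence is harmless: we condition on $y_t$ first, and then $\mathbf{u}_t$ is fixed.

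Combining the two steps and using the tower property gives
\[
E\|Z^\mu_{t+1}-Z^\nu_{t+1}\|_{TV}\le \rho\, E\|Z^\mu_t-Z^\nu_t\|_{TV}, \qquad \rho=(2-\delta(Q))(1-\tilde\delta(\mathcal{T})),
\]
so $E\|Z^\mu_t-Z^\nu_t\|_{TV}\le 2\rho^t$. This is exponential stability under (\ref{condition for predictor stability}), uniformly over policies. For KSPISP, the predictor $\pi_q=Z_{qK}$ obeys the same one-step recursion $F$ composed $K$ times, as in the definition of $G$. The only change is that agents' actions now depend on private histories. Conditional on all measurements up to time $t$, however, the actions at time $t$ are determined, so the same per-step bound applies. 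Hence $E\|\pi^\mu_q-\pi^\nu_q\|_{TV}\le 2\rho^{qK}$.

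The main obstacle is the Bayes-step lemma: proving that the expected TV after conditioning is at most $(2-\delta(Q))$ times the prior TV when the expectation is taken under the true measure only. This step needs care with the densities and normalizations, which is why the condition $\mu\ll\nu$ is imposed. I would reproduce the argument of \cite{mcdonald2020exponential} with $Q$ replaced by the product channel $\prod_i Q^i$.
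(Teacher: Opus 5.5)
Your proposal is correct and follows essentially the same route as the paper. The paper also applies the Dobrushin contraction $(1-\tilde{\delta}(\mathcal{T}))$ to the prediction step. It then splits the Bayes step into a likelihood-weighted term $T_1$, with $E^{\mu}[T_1]\le\|\mu-\nu\|_{TV}$, and a normalization term $T_2$, with $E^{\mu}[T_2]\le\int|\alpha_\mu-\alpha_\nu|\prod_i\psi(dy^i)\le(1-\delta(Q))\|\mu-\nu\|_{TV}$, and obtains the KSPISP case by composing the one-step update $K$ times. Your observation that actions within a period are determined once the measurements up to time $t$ are fixed makes explicit a point the paper's KSPISP sentence passes over.
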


%

\textbf{Proof}. We first consider the OSDISP. Let $\alpha_{\mu}(\mathbf{y_{t}})=\int_{\mathbb{X}} \mu(dx_{t})\prod\limits_{i=1}^{N}h(x_{t},y_{t}^{i})$ and 
$\alpha_{\nu}(\mathbf{y_{t}})=\int_{\mathbb{X}} \nu(dx_{t})\prod\limits_{i=1}^{N}h(x_{t},y_{t}^{i})$.

\begin{align*}
\nonumber        &\|F(\mu,\mathbf{u}_{t},y_{t}^{[1,N]})-F(\nu,\mathbf{u}_{t},y_{t}^{[1,N]})\|_{TV}=\\
&\sup_{\|f\|_{\infty}\leq 1}\bigg|  \frac{\int_{\mathbb{X}} f(x_{t+1}) \mathcal{T}(dx_{t+1}|x_{t}, u_{t}^{[1,N]})\mu(dx_{t})\prod\limits\limits_{i=1}^{N} h(x_{t},y_{t}^{i})}{\alpha_{\mu}(\mathbf{y}_{t})}\\
 \nonumber       &\quad- \frac{\int_{\mathbb{X}} f(x_{t+1})\mathcal{T}(dx_{t+1}|x_{t}, u_{t}^{[1,N]})\nu(dx_{t})\displaystyle \prod\limits_{i=1}^{N} h(x_{t},y_{t}^{i})}{\alpha_{\nu}(\mathbf{y}_{t})} \bigg|\\
  \nonumber      & \leq (1-\tilde{\delta}(\mathcal{\tau}))\sup_{\|f\|_{\infty}\leq 1}\bigg|  \frac{\int_{\mathbb{X}} f(x_{t}) \mu(dx_{t})\prod\limits_{i=1}^{N} h(x_{t},y_{t}^{i})}{\alpha_{\mu}(\mathbf{y}_{t})}\\
  & \quad \quad  \quad \quad  - \frac{\int_{\mathbb{X}} f(x_{t})\nu(dx_{t})\prod\limits_{i=1}^{N} h(x_{t},y_{t}^{i})}{\alpha_{\nu}(\mathbf{y}_{t})} \bigg|\\
 & \leq  (1-\tilde{\delta}(\mathcal{\tau})) (T_{1}+T_{2})
\end{align*}

Where, 

\begin{align}
 \nonumber  &T_{1}=\sup_{\|f\|_{\infty}\leq 1}\bigg|  \frac{\int_{\mathbb{X}} f(x_{t})\mu(dx_{t})\prod\limits_{i=1}^{N} h(x_{t},y_{t}^{i})}{\alpha_{\mu}(\mathbf{y}_{t})}\\
 \nonumber &\qquad \qquad \quad \quad -\frac{\int_{\mathbb{X}} f(x_{t})\nu(dx_{t})\prod\limits_{i=1}^{N} h(x_{t},y_{t}^{i})}{\alpha_{\mu}(\mathbf{y}_{t})} \bigg|
\end{align} 

and 
\begin{align}
\nonumber    &T_{2}=\\
\nonumber &\sup_{\|f\|_{\infty}\leq 1}\big|\frac{\alpha_{\nu}(\mathbf{y}_{t})-\alpha_{\mu}(\mathbf{y}_{t})}{\alpha_{\nu}(\mathbf{y}_{t})\alpha_{\mu}(\mathbf{y}_{t})} \big| \bigg| \int_{\mathbb{X}} f(x_{t})\nu(dx_{t})\prod\limits_{i=1}^{N} h(x_{t},y_{t}^{i}) \bigg|
\end{align}
Next, we consider the expectation of $T_{1}$ and $T_{2}$ separately.

\begin{align*}
 & E^{\mu}[T_{1}] \\
 &=  \int \sup_{\|f\|_{\infty}\leq 1} \bigg| \frac{\int_{\mathbb{X}} f(x_{t})\mu(dx_{t})\prod\limits_{i=1}^{N} h(x_{t},y_{t}^{i})}{\alpha_{\mu}(\mathbf{y}_{t})}\\
 & \qquad\quad \qquad- \frac{\int_{\mathbb{X}} f(x_{t})\nu(dx_{t})\prod\limits_{i=1}^{N} h(x_{t},y_{t}^{i})}{\alpha_{\mu}(\mathbf{y}_{t})} \bigg|\\
 & \qquad \prod\limits_{i=1}^{N} h(x_{t},y^{i}_{t}) \prod\limits_{i=1}^{N}\psi(dy^{i}_{t})\mu(dx_{t})\\
 \nonumber   &= \int \sup_{\|f\|_{\infty}\leq 1} \bigg|\int_{\mathbb{X}} f(x_{t})\mu(dx_{t})\prod\limits_{i=1}^{N} h(x_{t},y_{t}^{i})\\
 & \qquad- \int_{\mathbb{X}} f(x_{t})\nu(dx_{t})\prod\limits_{i=1}^{N} h(x_{t},y_{t}^{i})\bigg| \prod\limits_{i=1}^{N}\psi(dy^{i}_{t})\\
  &= \int   f^{*}(x_{t})\mu(dx_{t})\prod\limits_{i=1}^{N} h(x_{t},y_{t}^{i}) \prod\limits_{i=1}^{N}\psi(dy^{i}_{t})\\
  & \qquad - \int f^{*}(x_{t+1})\nu(dx_{t})\prod\limits_{i=1}^{N} h(x_{t},y_{t}^{i})\prod\limits_{i=1}^{N}\psi(dy^{i}_{t})\\
  &= \int   f^{*}(x_{t})Q(d\mathbf{y}_{t}|x_{t})\mu(dx_{t})\\
  & \qquad - \int f^{*}(x_{t})Q(d\mathbf{y}_{t}|x_{t})\nu(dx_{t})\\
  \nonumber  &\leq \|\mu-\nu\|_{TV}
\end{align*}

\begin{align*}
\nonumber &E^{\mu}[T_{2}]\\
&=\int \sup_{\|f\|_{\infty}\leq 1}\big|\frac{\alpha_{\nu}(\mathbf{y}_{t})-\alpha_{\mu}(\mathbf{y}_{t})}{\alpha_{\nu}(\mathbf{y}_{t})\alpha_{\mu}(\mathbf{y}_{t})} \big| \\
&\bigg|  \int f(x_{t})\nu(dx_{t})\prod\limits_{i=1}^{N} h(x_{t},y_{t}^{i}) \bigg|\prod\limits_{i=1}^{n} h(x_{t},y^{i}_{t}) \prod\limits_{i=1}^{n}\psi(dy^{i}_{t})\mu(dx_{t})\\
 \nonumber   &=\int \sup_{\|f\|_{\infty}\leq 1}\big|\frac{\alpha_{\nu}(\mathbf{y}_{t})-\alpha_{\mu}(\mathbf{y}_{t})}{\alpha_{\nu} (\mathbf{y}_{t})}\big|\\
 & \bigg|\int f(x_{t}) \nu(dx_{t})\prod\limits_{i=1}^{N} h(x_{t},y_{t}^{i}) \bigg|  \prod\limits_{i=1}^{n}\psi(dy^{i}_{t})\\
 \nonumber   &\leq\int \sup_{\|f\|_{\infty}\leq 1}|\frac{\alpha_{\nu}(\mathbf{y}_{t})-\alpha_{\mu}(\mathbf{y}_{t})}{\alpha_{\nu}(\mathbf{y}_{t}) }| \int \nu(dx_{t})\prod\limits_{i=1}^{N} h(x_{t},y_{t}^{i}) \prod\limits_{i=1}^{n}\psi(dy^{i}_{t})\\   
  \nonumber  &\leq \int |\alpha_{\mu}(\mathbf{y}_{t})-\alpha_{\nu}(\mathbf{y}_{t})|\prod\limits_{i=1}^{n}\psi(dy^{i}_{t})
\end{align*}
Let $S^{+}=\{\mathbf{y}\in \prod\limits_{i=1}^{N}\mathbb{Y}^{i}\mid \alpha_{\mu}(\mathbf{y})-\alpha_{\nu}((\mathbf{y}))>0\}$ and $S^{-}=\{\mathbf{y}\in \prod\limits_{i=1}^{N}\mathbb{Y}^{i}\mid \alpha_{\mu}(\mathbf{y})-\alpha_{\nu}(\mathbf{y})<0\}$. Then, we get

\begin{align*}
\nonumber    &\int |\alpha_{\mu}(\mathbf{y}_{t})-\alpha_{\nu}(\mathbf{y}_{t})|\prod\limits_{i=1}^{N}\psi(dy^{i}_{t})\\
&=\int_{S^{+}} (\alpha_{\mu}(\mathbf{y}_{t})-\alpha_{\nu}(\mathbf{y}_{t}))\prod\limits_{i=1}^{n}\psi(dy^{i}_{t})\\
& \quad \quad \quad +\int_{S^{-}} (\alpha_{\nu}(\mathbf{y}_{t})-\alpha_{\mu}(\mathbf{y}_{t}))\prod\limits_{i=1}^{n}\psi(dy^{i}_{t})\\
 \nonumber   &=\int \mu(dx_{t}) \int (\mathbbm{1}_{S^{+}}(\mathbf{y}_{t})-\mathbbm{1}_{S^{-}}(\mathbf{y}_{t}))\prod\limits_{i=1}^{N}h(x_{t},y_{t}^{i})\prod\limits_{i=1}^{n}\psi(dy^{i}_{t}) \\
 & -  \int \nu(dx_{t}) \int (\mathbbm{1}_{S^{+}}(\mathbf{y}_{t})-\mathbbm{1}_{S^{-}}(\mathbf{y}_{t}))\prod\limits_{i=1}^{N}h(x_{t},y_{t}^{i})\prod\limits_{i=1}^{n}\psi(dy^{i}_{t})  \\
 &\leq \sup_{\|f\|_{\infty}\leq 1} \big|\int f(\mathbf{y}_{t}) Q(d\mathbf{y}_{t}|x_{t})\mu(dx_{t}) \\
\nonumber & \qquad \qquad \qquad \qquad -\int f(\mathbf{y}_{t}) Q(d\mathbf{y}_{t}|x_{t})\nu(dx_{t})\big|\\
 & = (1-\delta(Q))\|\mu-\nu\|_{TV}
\end{align*}

\qed

\begin{remark}
It is important to note that the previous inequality is independent of policy hence we obtain a bound that is uniform over all policies.
\end{remark}

Now, for the KSPISP, since the predictor update is obtained by applying the Bayesian update for the one step delayed problem $K$-times, the desired result follows. Note that here once again, the bound obtained is independent of policy thus allowing us to obtain a uniform bound for $L_{q}$ as defined in (\ref{L_q}).\qed 

\subsection{Predictor stability via the Hilbert metric}
In this section we provide another sufficient condition for (\ref{L_q}) to hold.

    \begin{definition}
Let $\mu$ and $\nu$ be two non-negative finite measures on a measurable space $(\mathbb{X},\mathcal{F})$. Define
\[
h(\mu,\nu) :=
\begin{cases}
\displaystyle
\log\!\left(
\frac{\sup\limits_{A \in \mathcal{F}:\,\nu(A)>0} \frac{\mu(A)}{\nu(A)}}
     {\inf\limits_{A \in \mathcal{F}:\,\nu(A)>0} \frac{\mu(A)}{\nu(A)}}
\right),
& \text{if $\mu \sim \nu$ }, \\[2.5ex]
0, & \text{if } \mu = \nu \equiv 0, \\[1ex]
+\infty, & \text{otherwise}.
\end{cases}
\]
\end{definition}
A useful property of the Hilbert metric is that 
\begin{equation} \label{hilbert inequality}
    \|\mu-\nu\|_{TV}\leq \frac{2}{3}h(\mu,\nu)
\end{equation}

\begin{assumption} \label{assump HM}
    (i) $\mathbb{Y}$, $\mathbb{U}$ are finite. (ii) For all $x\in \mathbb{X}$, $\mathbf{y}\in \mathbb{Y}$: $Q(\mathbf{y}|x)\geq \epsilon$ for some $\epsilon>0$. (ii) There exists some nonnegative measure $\lambda$ on $\mathbb{X}$, and a constant $\epsilon_{\mathbf{u}}>0$ such that for any Borel $A$ and any $x\in \mathbb{X}$, we have \[\epsilon_{\mathbf{u}}\lambda(A)\leq \tau(A|x,\mathbf{u}) \leq \frac{1}{\epsilon_{\mathbf{u}}} \lambda(A)\]
\end{assumption}
\begin{theorem}\label{PredStabHilbert}
    Under Assumption \ref{assump HM}, condition (\ref{L_q}) holds.
\end{theorem}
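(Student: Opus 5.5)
Under Assumption \ref{assump HM}, we will show that the predictor map contracts in the Hilbert metric, uniformly over observations, actions and policies, after a single step. We then transfer this to total variation using (\ref{hilbert inequality}).

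\emph{Step 1: decompose the predictor update.} The update $F(\cdot,\mathbf{u},\mathbf{y})$ in (\ref{equation for F}) is the composition of two operations. The first is the Bayes correction $\mu\mapsto Q(\mathbf{y}|\cdot)\mu/\int Q(\mathbf{y}|x)\mu(dx)$. The second is the prediction $\mu\mapsto \int \mathcal{T}(\cdot|x,\mathbf{u})\mu(dx)$. The Hilbert metric is projective, i.e.\ invariant under positive scalings. Hence normalizations can be ignored, and it suffices to treat the unnormalized positive linear maps. Multiplication by the positive function $Q(\mathbf{y}|\cdot)$ does not increase $h$: both the supremum and the infimum of the ratios $\mu(A)/\nu(A)$ are preserved, since the density multiplies $\mu$ and $\nu$ identically. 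In particular, if $\mu\sim\nu$ the correction preserves equivalence. Here $Q(\mathbf{y}|x)\geq\epsilon>0$ guarantees that no observation annihilates a measure, and finiteness of $\mathbb{Y}$ makes the correction well defined pointwise in $\mathbf{y}$.

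\emph{Step 2: strict contraction of the prediction step (Birkhoff).} By Assumption \ref{assump HM}(iii), every image satisfies $\epsilon_{\mathbf{u}}\lambda\leq \mu\mathcal{T}(\cdot|\mathbf{u})\leq \epsilon_{\mathbf{u}}^{-1}\lambda$ for every probability $\mu$, whether or not the inputs are equivalent. Hence for any $\mu,\nu$ we get $h(\mu\mathcal{T},\lambda)\leq 2\log(1/\epsilon_{\mathbf{u}})$. The projective diameter of the image cone is therefore at most $\Delta:=4\log(1/\underline{\epsilon})$, where $\underline{\epsilon}=\min_{\mathbf{u}}\epsilon_{\mathbf{u}}>0$; this minimum is positive because $\mathbb{U}$ is finite. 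Birkhoff's contraction theorem then gives
\[
h(\mu\mathcal{T},\nu\mathcal{T})\leq \tanh(\Delta/4)\,h(\mu,\nu),
\]
and, more importantly, $h(\mu\mathcal{T},\nu\mathcal{T})\leq\Delta<\infty$ for arbitrary $\mu,\nu$. This finiteness is what handles $\pi_{q-M}$ versus $\pi^{*}$ when these priors are not mutually equivalent: after one step both predictors lie in the same bounded cone. Iterating over the $MK$ one-step updates in the window $[(q-M)K,qK-1]$ and combining with Step 1 yields the deterministic, pathwise bound
\[
h\big(\pi_{q}(\pi_{q-M},I^{M}_{q}),\pi_{q}(\pi^{*},I^{M}_{q})\big)\leq \Delta\,\tanh(\Delta/4)^{MK-1}.
\]

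\emph{Step 3: conclude.} By (\ref{hilbert inequality}), the total variation distance inside $L_q$ is bounded by $\tfrac{2}{3}\Delta\tanh(\Delta/4)^{MK-1}$. This bound holds for every realization of the measurements and actions. It is therefore uniform over policies $\gamma$, and taking the expectation and the supremum in (\ref{L_q}) keeps it. Since $\tanh(\Delta/4)<1$, the bound tends to $0$ as $M\to\infty$, for every $q$, which is condition (\ref{L_q}).

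The main obstacle is Step 2. One must check that Birkhoff's theorem applies to the prediction operator as a positive linear kernel on measures over a general $\mathbb{X}$, with the diameter bound computed from the two-sided minorization by $\lambda$. One must also handle the non-equivalent initial pair, where $h=+\infty$, via the one-step finiteness bound rather than the contraction itself. If needed, I would avoid Birkhoff's general statement by a direct argument: write $\mu\mathcal{T}$ as having density with respect to $\lambda$ bounded in $[\epsilon_{\mathbf{u}},\epsilon_{\mathbf{u}}^{-1}]$, and apply the standard Hilbert-metric contraction estimate for kernels with bounded densities.
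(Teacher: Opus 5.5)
Your proof is correct and takes the same route as the paper: pathwise geometric contraction of the predictor in the Hilbert metric under the two-sided mixing condition of Assumption \ref{assump HM}, followed by the transfer to total variation via (\ref{hilbert inequality}). The paper cites \cite[Proposition 3.9]{le2004stability} for the contraction and uses dominated convergence for the expectation. You instead reprove the contraction directly via Birkhoff's theorem, and you treat the merely absolutely continuous pair $\pi_{q-M}\ll\pi^{*}$ through the one-step diameter bound. You also observe that the resulting bound $\Delta\tanh(\Delta/4)^{MK-1}$ is deterministic and independent of the policy, so the expectation and the supremum in (\ref{L_q}) follow immediately.
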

\noindent\textbf{Proof sketch.} It follows from \cite[Proposition 3.9]{le2004stability} or \cite[Lemma 5]{demirciRefined2023} that the predictor is path-wise geometrically stable under the Hilbert metric thus by the dominated convergence theorem it is geometrically stable in expectation under the Hilbert metric. The result then follows from inequality (\ref{hilbert inequality}). \qed

\section{Q-learning: Convergence to Near Optimality under Decentralized Information Structures}\label{LearningQSectionInf}

So far in the paper, we developed general approximation results. However, some of these may be difficult to implement numerically (due to the Bayesian updates involved), not unlike similar results for POMDPs. Accordingly, rigorously justified reinforcement algorithms would be desirable for these settings. 

With this motivation, in this section we first provide a learning algorithm for the KSPISP, building on \cite{KSYContQLearning} for weak-Feller MDPs. We note that a related learning algorithm has been studied in \cite{mao2023decentralized} where the authors provide a MARL (Multi-agent reinforcement learning) algorithm for the case where the agents all have finite memory and policies are restricted to a parametric class which may not include all admissible policies. The authors show that the algorithm converges in finite time and numerically demonstrated its performance. In contrast, in our work we rigorously establish convergence to near optimality and we allow for more general setups. In particular, we provide numerical learning algorithms which build upon the centralized reduction of the KSPISP, discretization of the space of predictors, as well as our rigorous approximation results.   
\subsection{Q-learning via predictor quantization (QPQ)}
\subsubsection{Description of the quantization procedure and the Q-learning algorithm}
Suppose for some $n \in \bf{N}$ we quantize the space of predictors $\mathcal{P}(X)$ into multiple bins $B_{1},...,B_{m}$ such that $\displaystyle \sup_{\pi \in B_{i}}W_{1}(\pi,\pi_{i})\leq \frac{1}{n}$ for all $i \in \{1,...,m\}$. Where, $\pi_{i}$ is a representative element from the bin $B_{i}$. Additionally, with $\bigcup_{i=1}^{m}B_{i}=\mathcal{P}(\mathbb{X})$, we apply policies that are constant over each bin. When the MDP transitions to a new state, it is approximated by a fixed element called a "quantizer output" of the bin to which it is the closest. We will use the notation $\pi^{c}_{i}$ to refer to the quantizer outpu or representative element of bin $i$. Let $\displaystyle q(\pi'):=\arg\min_{\{\pi \in \{\pi^{c}_{1},...,\pi^{c}_{m}\}\}}W_{1}(\pi',\pi)$. If the minimum is achieved by two different points, it is assumed that the selection is made in whichever way which guarantees the measurability of $q$. Consider the following assumption on the learning rates.
\begin{assumption} \label{specififc learning rates}
    For all $\pi \in \{\pi^{c}_{1},...,\pi^{c}_{m}\}$, $a\in \mathbb{A}$, and for all $k\geq 1$, we have
    \begin{enumerate}        
        \item $\alpha_{k}(\pi,a)=0$ unless $(\pi,a)=(\pi_{k},a_{k})$.
        \item  $\displaystyle \alpha_{k}(\pi,a)=\frac{1}{1+\sum_{l=0}^{k} \mathbbm{1}_{(\pi_{l},a_{l})=(\pi,a)}}$
    \end{enumerate}
\end{assumption}
Under Assumption \ref{specififc learning rates}, Algorithm \ref{alg:AQPQ} is constructed. 

{\setlength{\topsep}{0pt}
 \setlength{\partopsep}{0pt}
 \setlength{\parskip}{0pt}
\begin{algorithm}[h]

\caption{Quantized Asynchronous Q-learning (AQPQ)}
\label{alg:AQPQ}
\begin{algorithmic}[1]

\State Arbitrarily initialize $Q_{0}$
\State Initialize $\pi_{0} \sim \mu$

\For{$t = 0,1,\dots,T$}
    \State Generate an action using the exploration policy $\gamma$: 
        $a_{t} \sim \gamma(\,\cdot \mid \pi_{t})$
    \State Generate next state: 
        $\pi_{t+1} \sim \theta(\,\cdot \mid \pi_{t}, a_{t})$
    \State Quantize the state: $\pi = q(\pi_{t})$

    \ForAll{$(\pi,a)$}
        \If{$(\pi,a) = (\pi_{t}, a_{t})$}
            \State 
            \begin{align*}
            &Q_{t+1}(\pi,a)
              = \left(1-\alpha_{t}(\pi,a)\right) Q_{t}(\pi,a)\\         & \quad \quad \quad \quad \quad+ \alpha_{t}(\pi,a)
              \left(
              \tilde{c}(\pi_{t},a)
              + \tilde{\beta}\min_{v} Q_{t}(q(\pi_{t+1}),v)
              \right)
            \end{align*}
        \Else
            \State $Q_{t+1}(\pi,a) = Q_{t}(\pi,a)$
        \EndIf
    \EndFor
\EndFor

\State \Return $Q_{T}$

\end{algorithmic}
\end{algorithm}
}
Next, we present the following assumption which is suitable for a synchronous algorithm. 
\begin{assumption} \label{General learning rates}
    For all $k\geq 1$, we have
    \begin{enumerate}       
    \item $\alpha_{k}\in [0,1]$. $\sum_{k}\alpha_{k}=\infty$.
    \item $\sum_{k}\alpha^{2}_{k}\leq C$ for some constant $C<\infty$.
    \end{enumerate}
\end{assumption}
Under Assumption \ref{General learning rates}, we consider a  synchronous quantized Q-learning algorithm (Algorithm \ref{alg:SQPQ}). 

\vspace{-9pt}
\begin{algorithm}[H]
\caption{Quantized Synchronous Q-learning (SQPQ)}
\label{alg:SQPQ}
\begin{algorithmic}[1]

\State Arbitrarily initialize $Q_{0}$

\For{$t = 0,1,\dots,T$}
    \For{$\pi \in \{\pi^{c}_{1}, \dots, \pi^{c}_{m}\}$}
        \For{$a \in \mathbb{A}$}
            \State Generate next state: 
                $\pi_{t+1} \sim \theta(\,\cdot \mid \pi, a)$
            \State Quantize the state:
                \[
                \pi_{t+1}
                = \arg\min_{\pi \in \{\pi^{c}_{1},...,\pi^{c}_{m}\}} 
                W_{1}(\pi_{t+1}, \pi)
                \]

            \State 
            \begin{align*}
                   & Q_{t+1}(\pi,a)
            = \left(1 - \alpha_{t}\right) Q_{t}(\pi,a)\\             &\quad \quad \quad \quad \quad+ \alpha_{t}
            \left(
            \tilde{c}(\pi,a)
            + \tilde{\beta}
            \min_{v} Q_{t}(\pi_{t+1}, v)
            \right)
            \end{align*}                    
        \EndFor
    \EndFor
\EndFor

\State \Return $Q_{T}$

\end{algorithmic}
\end{algorithm}

\subsection{Convergence of QPQ}

\begin{assumption} \label{compactness and continuity of c} Suppose the kernel $\theta$ (\ref{Theta definition}) is weakly continuous. 
\end{assumption}
Note that sufficient conditions for Assumption \ref{compactness and continuity of c} to hold are provided in Theorem \ref{theorem6}.
\begin{assumption} \label{unique invarian probability measure}
    Assume that the exploration policy induces a unique invariant probability measure on the space of predictors $\mathcal{P}(\mathbb{X})$ and that the initialization $\mu$ is in the support of the invariant probability measure. Additionally, the unique invariant probability measure assigns measure zero to all the bin boundaries ($\partial B_{i}$ for $i=\{1,\ldots,m\}$).
\end{assumption}
\begin{remark}
    We note that a sufficient condition for the predictor process to admit a unique invariant probability measure under the uniform exploration policy is given by the following: (i) the kernel $\theta$ is weak-Feller. (ii) $\mathbb{X}$ is compact. (iii) The uniform exploration policy induces a unique invariant probability measure on the process $\{x_{t}\}_{t\geq 1}$. (iv) The predictor is stable in total variation. (see \cite[Theorem 4 ]{creggZeroDelayNoiseless} as well as \cite[Corollary 3]{DiMasiStettner2005ergodicity}). 
\end{remark}

\begin{theorem} \label{Convergence of Quantized Q-learning}
Consider the MDP $(\pi_{t},a_{t})$. Then, under Assumption \ref{compactness and continuity of c} and \ref{unique invarian probability measure},  the AQPQ algorithm (Algorithm 1) converges almost surely asymptotically to $Q^{*}(\pi,a)$. Additionally, a near optimal policy can be obtained by $\displaystyle \gamma^{*}(\pi)=\arg\min_{a\in \mathbb{A}}Q^{*}(q(\pi),a)$ where $\mathbb{A}$ is as defined in Section \ref{K periodic section}. 
\end{theorem}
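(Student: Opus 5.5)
The plan is to reduce the statement to the quantized Q-learning framework of \cite{KSYContQLearning}, applied to the centralized MDP $(\pi_q,\mathbf{a}_q)$ of Theorem \ref{centralized reduction KSPISP}. The argument has two parts. First, the AQPQ iterates converge almost surely to the Q-function of a finite MDP built from the quantization and the invariant measure. Second, the greedy policy of that finite model, extended to $\mathcal{P}(\mathbb{X})$ by composing with $q$, is near optimal for the original KSPISP, with the suboptimality vanishing as $n\to\infty$. The action set $\mathbb{A}$ must be finite for a tabular algorithm. I will therefore use Proposition \ref{finite action approximation} to restrict to finite $\mathbb{U}^i$, and additionally restrict each $f^i_{qK+r}$ to a finite family of maps. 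I then argue, via the compactness of the Young-topology action set in Definition \ref{top k-step}, that a finite subset of $\mathbb{A}$ suffices up to an arbitrary $\epsilon$, using continuity of $\tilde c$ and $\theta$ in $a$.

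\textbf{Convergence of AQPQ.} Fix the exploration policy $\gamma$. By Assumption \ref{unique invarian probability measure}, the joint process $(\pi_t,a_t)$ has a unique invariant measure $\kappa$, and $\mu$ lies in its support. I would first show that each pair (bin, action) is visited infinitely often, which needs $\kappa(B_i)>0$ on the relevant bins. For each bin $B_i$ and action $a$, I define the limiting finite model by
\[
\hat c(\pi^c_i,a)=\int_{B_i}\tilde c(\pi,a)\,\hat\kappa_i(d\pi),\qquad \hat P(\pi^c_j\mid \pi^c_i,a)=\int_{B_i}\theta(B_j\mid \pi,a)\,\hat\kappa_i(d\pi),
\]
where $\hat\kappa_i$ is $\kappa$ restricted to $B_i$ and normalized. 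The ergodic theorem for positive Harris or unique-invariant-measure chains gives almost sure convergence of empirical averages along the visit times. Combined with the learning rates of Assumption \ref{specififc learning rates} and a stochastic-approximation argument (Tsitsiklis-type asynchronous contraction with $\tilde\beta<1$), this yields $Q_t\to Q^*$ almost surely, where $Q^*$ is the fixed point of the finite Bellman operator for $(\hat c,\hat P)$. This step reproduces the argument of \cite{KSYContQLearning} essentially verbatim; the only new input is the state space $\mathcal{P}(\mathbb{X})$.

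\textbf{Near optimality.} Here I invoke the near-optimality of quantized models for weak-Feller MDPs \cite{SaLiYuSpringer}, in the form used in \cite{KSYContQLearning}. Assumption \ref{compactness and continuity of c} gives weak continuity of $\theta$. Continuity and boundedness of $\tilde c$ in $(\pi,a)$ follow from boundedness and continuity of $c$ together with the kernel regularity used in Theorem \ref{theorem6}. I take $\mathbb{X}$ compact, so that $\mathcal{P}(\mathbb{X})$ is compact under $W_1$ and bins of diameter $1/n$ exist with finitely many representatives. Since $\kappa(\partial B_i)=0$, the weighted quantized model falls under the approximation theorem. That theorem states that the extended policy $\gamma^*(\pi)=\arg\min_a Q^*(q(\pi),a)$ satisfies $J(\pi,\gamma^*)-J^*(\pi)\to 0$ as $n\to\infty$ for $\kappa$-almost every $\pi$, or uniformly on the support.

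\textbf{Main obstacle.} The hardest step is verifying that the weak-Feller property and the continuity of $\tilde c$ hold on the compact action space, and that finitely many map-valued actions suffice. The action space consists of maps, so there is a finite-action reduction on top of state quantization. I would first prove joint continuity of $\tilde c$ on $\mathcal{P}(\mathbb{X})\times\mathbb{A}$, mirroring the proof of Theorem \ref{theorem6}. I would then apply uniform continuity on the compact set $\mathcal{P}(\mathbb{X})\times\mathbb{A}$ to pick an $\epsilon$-net of actions, and show that the resulting loss is $O(\epsilon/(1-\tilde\beta))$.
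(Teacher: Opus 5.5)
Your proposal is correct and takes essentially the same route as the paper. The paper's proof simply invokes \cite[Theorem 7]{KSYContQLearning} and \cite[Theorem 4.3]{SaLiYuSpringer} for the weak-Feller MDP $(\pi_q,\mathbf{a}_q)$ on the compact space $\mathcal{P}(\mathbb{X})$, and uses Assumption \ref{unique invarian probability measure} (via \cite[Theorem 5.1]{creggZeroDelayNoiseless}) to ensure that the bins visited during exploration are visited infinitely often; your $\kappa$-weighted finite model just fleshes out that skeleton. Your finite-action reduction is not in the paper, which treats the action space as compact (it is finite in the examples), and it is reasonable extra care, but the compactness you invoke holds for the relaxed set $E^i$ of randomized maps rather than for deterministic maps in general, so your $\epsilon$-net step needs to be run there with a density argument.
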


\noindent\textbf{Proof.}: Since the state space, and the action space are compact, $\tilde{c}$ is continuous and bounded, and $\theta$ is weakly continuous, it follows from (\cite{KSYContQLearning} Theorem 7) and (\cite{SaLiYuSpringer} Theorem 4.3), that the AQPQ algorithm converges asymptotically to a near optimal solution for all the quantized states which are visited infinitely often during the exploration process. By Assumption \ref{unique invarian probability measure} all the states visited during the exploration process are visited infinitely often (see for example Theorem 5.1 in \cite{creggZeroDelayNoiseless}) and thus the convergence of Algorithm 1 to a near optimal solution applies to all states visited during the exploration phase.  \qed

We note that it follows from Corollary 3.2 \cite{karayukselNonMarkovian}, that the existence of a unique invariant probability measure on the predictor process assumption, could be replaced with the existence of an ergodic measure. However, sufficient conditions, for the existence of an ergodic measure on the predictor process, could be difficult to characterize. 
\begin{theorem}
    Under Assumption \ref{General learning rates}, the SQPQ algorithm (Algorithm 2) converges almost surely asymptotically to $Q^{*}(\pi,a)$ such that $\displaystyle \min_{a}Q^{*}(\pi,a)$ is a near optimal value function as the step size of the quantization ($n$) becomes sufficiently large. Additionally, a near optimal policy can be obtained by $\gamma^{*}(\pi)=\arg\min_{a\in \mathbb{A}}Q^{*}(q(\pi),a)$.
\end{theorem}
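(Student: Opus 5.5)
The plan has two parts. First, the synchronous iteration converges almost surely to the fixed point of a finite-state Bellman operator. Second, that fixed point is near optimal by the quantization results of \cite{SaLiYuSpringer} under Assumption \ref{compactness and continuity of c}.

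\textbf{Step 1 (the limit object).} Fix the quantizer $q$ with bins $B_1,\dots,B_m$ and representatives $\pi^c_1,\dots,\pi^c_m$. Define the finite MDP with state space $\{\pi^c_1,\dots,\pi^c_m\}$ and action space $\mathbb{A}$, which is finite once each $\mathbb{U}^i$ is finite, following Proposition \ref{finite action approximation}. Its cost is $\tilde c(\pi^c_j,a)$ and its kernel is
\[P_n(\pi^c_k\mid \pi^c_j,a)=\theta(B_k\mid \pi^c_j,a).\]
The Bellman operator $(TQ)(\pi,a)=\tilde c(\pi,a)+\tilde\beta\sum_k P_n(\pi^c_k|\pi,a)\min_v Q(\pi^c_k,v)$ is a $\tilde\beta$-contraction in sup norm, with $\tilde\beta=\beta^K<1$. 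It therefore has a unique fixed point $Q^*$.

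\textbf{Step 2 (stochastic approximation).} Algorithm \ref{alg:SQPQ} updates every pair at each iteration:
\[Q_{t+1}=(1-\alpha_t)Q_t+\alpha_t(TQ_t+w_t),\]
where $w_t(\pi,a)=\tilde\beta\bigl[\min_v Q_t(q(\pi_{t+1}),v)-\sum_k P_n(\pi^c_k|\pi,a)\min_vQ_t(\pi^c_k,v)\bigr]$. Here $\pi_{t+1}\sim\theta(\cdot|\pi,a)$ is drawn independently given the past. Hence $w_t$ is a martingale difference sequence with conditional second moment bounded by $C(1+\|Q_t\|_\infty^2)$.

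$Q_t$ stays bounded. Take $\|Q_0\|_\infty\le\|\tilde c\|_\infty/(1-\tilde\beta)$, or note that a convex combination step preserves a bounding ball. Then the standard result of Jaakkola--Jordan--Singh and Tsitsiklis (asynchronous stochastic approximation for contractions) applies with steps satisfying Assumption \ref{General learning rates}, namely $\sum\alpha_t=\infty$ and $\sum\alpha_t^2<\infty$. This gives $Q_t\to Q^*$ almost surely. No exploration or ergodicity assumption is needed, because every pair is updated every round. That is the advantage over Theorem \ref{Convergence of Quantized Q-learning}.

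\textbf{Step 3 (near optimality).} Extend $\hat J_n(\pi):=\min_aQ^*(q(\pi),a)$ to all of $\mathcal P(\mathbb X)$ as a piecewise-constant function. I would then use \cite[Theorem 4.3]{SaLiYuSpringer} (see also \cite{KSYContQLearning}) to show two things. First, $\sup_\pi|\hat J_n(\pi)-J^*(\pi)|$, or at least its pointwise version, tends to $0$ as $n\to\infty$. Second, the policy $\gamma^*(\pi)=\arg\min_aQ^*(q(\pi),a)$ has cost converging to $J^*$.

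The ingredients of that theorem are as follows:
\begin{itemize}
\item $\mathcal P(\mathbb X)$ is compact because $\mathbb X$ is compact, so the mesh $\sup_{\pi\in B_i}W_1(\pi,\pi^c_i)\le 1/n\to0$ makes the bins shrink in the weak topology.
\item $\mathbb A$ is compact under the topology of Definition \ref{top k-step}, by the Young-measure compactness discussed after it.
\item $\tilde c$ is continuous and bounded, by the argument of Theorem \ref{theorem6}.
\item $\theta$ is weak-Feller by Assumption \ref{compactness and continuity of c}.
\end{itemize}

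\textbf{Main obstacle.} Step 3 is the hard part. Weak continuity yields only convergence on compacts, or uniformly via the compactness of $\mathcal P(\mathbb X)$. The discretized kernel $\theta(B_k|\cdot)$ is not continuous, so the argument must go through the approximate-model comparison of \cite{SaLiYuSpringer}. The comparison to set up is between the value of the finite model lifted by $q$ and $J^*$, which the optimal value of the weak-Feller MDP, uniformly continuous on the compact space, controls. I would first check uniform continuity of $J^*$ using Theorem \ref{theorem6}. Then I would bound $\|T_nJ-TJ\|$ through the modulus of continuity of $J^*$ over bins of diameter $1/n$.
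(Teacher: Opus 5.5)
Your proposal follows the same route as the paper, whose proof simply combines convergence of standard (Watkins) Q-learning on the finite quantized model with near-optimality of that quantized model; the paper cites \cite[Theorem 4.4]{OMZSY-centralized-reduction} for the latter, whereas you sketch the \cite{SaLiYuSpringer}-style operator comparison directly. One small correction: $\mathbb{A}$ consists of maps from private measurement/action histories to actions, so it is finite only when the $\mathbb{Y}^i$ are finite as well (as in the paper's examples), not merely when the $\mathbb{U}^i$ are; your Steps 1--2 and the loop over $a\in\mathbb{A}$ in Algorithm \ref{alg:SQPQ} tacitly need this finiteness, and the paper makes the same implicit assumption.
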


\noindent\textbf{Proof.} Here the proof follows from the near optimality of the quantized model \cite[Theorem 4.4]{OMZSY-centralized-reduction} as well as the convergence of the standard Watkins Q-learning algorithm \cite{Watkins}. \qed 

\subsection{Q-learning with sliding window periodic sharing pattern}
Now under Assumption \ref{specififc learning rates} we present a Q-learning algorithm with sliding window periodic sharing pattern (QSWPSP) (\ref{alg:QSWPSP}). Let $M$ denote the length of the sliding window of common information as defined in Theorem \ref{theorem8} so that $I^{M}_{qK}=\{y_{[(q-M)K,qK-1]}^{[1,N]}, u_{[(q-M)K,qK-1]}^{[1,N]}\}$. Here we assume that all measurement spaces $\mathbb{Y}^{1}$,...,$\mathbb{Y}^{N}$ are finite.

\begin{algorithm}[h]
\caption{QSWPSP}
\label{alg:QSWPSP}
\begin{algorithmic}[1]

\State Arbitrarily initialize $Q_{0}$

\For{$t = 0,\dots,KM$}
    \State Generate an action $a_{t}$ arbitrarily at random
\EndFor

\For{$q = M,\dots,T$}

    \State Generate an action $a_{q}$ arbitrarily at random
    \State Observe 
        $\{\mathbf{y}_{[qK,(q+1)K)]}, \mathbf{u}_{[qK,(q+1)K)]}\}$ 
        and the realized cost 
        \[
        \tilde{c}_{q}
        = \beta^{-qK}
          \sum_{t=qK}^{(q+1)K}
          c(x_{t}, \mathbf{u}_{t})
        \]
    \State Update the state 
        $I^{M}_{qK} \rightarrow I^{M}_{(q+1)K}$
        based on 
        $\{\mathbf{y}_{[qK,(q+1)K)]}, \mathbf{u}_{[qK,(q+1)K)]}\}$

    \ForAll{$(I^{M},a)$}
        \If{$(I^{M},a) = (I^{M}_{q}, a_{q})$}
            \State
            \begin{align*}
               & Q_{q+1}(I^{M},a)
              = \left(1-\alpha_{t}(I^{M},a)\right) Q_{q}(I^{M},a)\\
              &\quad \quad \quad \quad \quad + \alpha_{t}(I^{M},a)
                \left(
                  \tilde{c}_{q}
                  + \tilde{\beta}
                    \min_{v} 
                    Q_{q}\big(I^{M}_{(q+1)K}, v\big)
                \right)
            \end{align*}
        \Else
            \State $Q_{q+1}(I^{M},a) = Q_{q+1}(I^{M},a)$
        \EndIf
    \EndFor

\EndFor

\State \Return $Q_{T}$

\end{algorithmic}
\end{algorithm}

\begin{assumption} \label{PHR}
    The process $x_{t}$ induced by selecting actions arbitrarily at random in Algorithm QSWPSP admits a unique invariant probability measure, and every possible realization $I^{M}_{qK}$ under fixed window length $M$ is asymptotically realized infinitely often.
\end{assumption}
\begin{theorem}
    Suppose Assumption \ref{WF sliding window MDP} and condition \ref{condition for predictor stability} hold. Under Assumption \ref{PHR}, the QSWPSP Algorithm converges almost surely to a fixed point $Q^{*}(I^{M},a)$ as $T\rightarrow \infty$. Moreover, if the length of the window $M$ is sufficiently large the policy $\gamma(I^{M})=\arg\min_{a\in \mathbb{A}} Q^{*}(I^{M},a)$ is near optimal for the KSPISP.
\end{theorem}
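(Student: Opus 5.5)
The proof has two parts: almost sure convergence of the iterates, and near optimality of the limiting policy.

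\emph{Convergence.} Since $\mathbb{Y}^{i}$ and $\mathbb{U}^{i}$ are finite, the window $I^{M}$ takes finitely many values. The action set $\mathbb{A}$ of maps from finite private information to finite actions is also finite. So QSWPSP is asynchronous Q-learning on a finite state--action set $(I^{M},a)$, with learning rates from Assumption \ref{specififc learning rates}. The state $I^{M}_{q}$ alone is not Markov, since the true controlled Markov state is $\hat Z_q=(\pi_{q-M},I^{M}_q)$ of Theorem \ref{theorem8}. I would therefore invoke the convergence of Q-learning for non-Markovian finite-memory state processes (\cite{karayukselNonMarkovian}, as used in \cite{kara2021convergence} for POMDPs).

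The hypotheses needed are ergodicity of the joint process $(x_{qK},I^{M}_q,a_q)$ under the random exploration policy, and infinitely-often visitation of every $(I^{M},a)$. The first comes from the unique invariant measure in Assumption \ref{PHR}. The second follows from Assumption \ref{PHR} together with the random selection of $a_q$.

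The limit $Q^{*}$ is then the fixed point of a Bellman operator for an approximate finite MDP with state $I^{M}$. Its cost and transition kernel are obtained by averaging the true $r(\hat Z,a)$ and $\tilde\theta$ over the conditional distribution of $\pi_{q-M}$ given $I^{M}$ under the invariant measure. The Robbins--Monro steps with the $1/(1+\text{count})$ rates and the $\tilde\beta$-contraction give almost sure convergence, by the same argument as Theorem \ref{Convergence of Quantized Q-learning}.

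\emph{Near optimality.} I would compare this approximate MDP with the true MDP $(\hat Z_q,a_q)$ in the style of Theorem \ref{boundonperformance}. The key estimate is that both the averaged cost and the averaged kernel differ from $r(\hat Z,a)$ and $\tilde\theta(\cdot\mid\hat Z,a)$ by at most a constant times
$$\sup_{\pi',\pi''}\|\pi_q(\pi',I^{M})-\pi_q(\pi'',I^{M})\|_{TV}.$$
For the cost this follows from the expression for $r$ through $\pi_q(\pi_{q-M},I^M_q)$. For the kernel it follows from Lemma \ref{lemma for k-sliding window}. The averaging is harmless because each averaged quantity is a convex combination of quantities each within that distance of the true one.

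Then I would invoke Theorem \ref{predictor stability one-step delayed theorem}. Under condition (\ref{condition for predictor stability}), its bound contracts geometrically in expectation at rate $\rho^{MK}$ with $\rho=(2-\delta(Q))(1-\tilde\delta(\mathcal{T}))<1$, uniformly over policies, so $L_q\le C\rho^{MK}$ for all $q$. Repeating the telescoping argument of Theorem \ref{boundonperformance} and the robustness theorem that follows it gives
$$J(\hat Z_0,\gamma)-J(\hat Z_0)\le \frac{C'\|c\|_\infty}{(1-\beta)(1-\tilde\beta)}\rho^{MK}\to 0.$$
Assumption \ref{WF sliding window MDP} guarantees the value functions are well defined and minimizers are attained. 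Finally, Theorem \ref{theorem8} identifies the MDP $(\hat Z_q,a_q)$ with the KSPISP, so the bound transfers to the original problem.

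\emph{Main obstacle.} The hardest step is the first one: correctly identifying the Q-learning limit as the approximate MDP built with the invariant-measure-conditioned predictor, rather than with a fixed $\pi^{*}$. It needs the non-Markovian convergence theorem, whose hypotheses are unique ergodicity of the joint hidden process and filter, not just of $x_t$. I would try to derive these from Assumption \ref{PHR} plus predictor stability via \cite[Corollary 3]{DiMasiStettner2005ergodicity}. A second subtlety is that Theorem \ref{predictor stability one-step delayed theorem} is stated for $\mu\ll\nu$. When comparing two arbitrary priors I would route both through a common dominating reference $\pi^{*}$ and use the triangle inequality.
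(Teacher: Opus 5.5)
Your convergence part is fine and matches the paper in substance; the paper simply cites \cite[Theorem 8]{KSYContQLearning}. You do not need ergodicity of any filter there. Under state-independent random exploration, the window chain is driven by $x_t$ alone, and Assumption \ref{PHR} supplies its invariant measure.

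\textbf{The gap is in your near-optimality step.} Your key estimate bounds the error by the pathwise quantity $\sup_{\pi',\pi''}\|\pi_q(\pi',I^{M})-\pi_q(\pi'',I^{M})\|_{TV}$. Theorem \ref{predictor stability one-step delayed theorem} does not control this. Its proof bounds only $E^{\mu}[\|F(\mu,\cdot)-F(\nu,\cdot)\|_{TV}]$, where the expectation is under the law generated by the \emph{true} prior $\mu$ and $\nu$ is fixed given the past. The factor $1/\alpha_\mu$ in $T_1,T_2$ cancels only against the true observation density. Pathwise, Bayes' rule is not a total-variation contraction: the argument yields only the trivial $2(1-\tilde\delta(\mathcal{T}))$ per step, with no geometric decay.

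For the same reason, the prior $\pi'\sim P(d\pi_{q-M}\mid I^{M})$ in your convex-combination argument depends on the window itself, so it cannot be inserted into the expected bound. Routing through a dominating $\pi^{*}$ with the triangle inequality leaves a term comparing two priors, neither of which generates the data. So under condition (\ref{condition for predictor stability}) alone, your key estimate never connects to $L_q\le C\rho^{MK}$. It would connect under the pathwise stability behind Theorem \ref{PredStabHilbert}, but that is not a hypothesis here.

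\textbf{The missing idea is linearity.} Both $r(\hat Z_q,a_q)$ and the law of the new block $\mathbf{y}_{[qK,qK+K-1]}$ are linear in $\pi_q(\pi_{q-M},I^{M}_q)$. So averaging them over $P(d\pi_{q-M}\mid I^{M}_q)$ is the same as evaluating them at $E[\pi_q\mid I^{M}_q]$. Under the exploration policy, whose actions are independent of the state, this equals the stationary conditional law of $x_{qK}$ given the window, namely $\pi_q(\bar\pi,I^{M}_q)$, where $\bar\pi$ is the invariant marginal of $x_{(q-M)K}$.

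Hence your averaged model is exactly the paper's $(\hat Z^{*}_q,a_q)$ with $\pi^{*}=\bar\pi$. The two identifications you contrast coincide, and yours usefully pins down the $\pi^{*}$ that the paper leaves unspecified. The error term is then exactly $L_q$, assuming $\pi_{q-M}\ll\bar\pi$ as the paper assumes for its $\pi^{*}$. Theorem \ref{predictor stability one-step delayed theorem} gives $L_q\le 2\rho^{MK}$ uniformly over policies. Theorem \ref{boundonperformance} and the robustness theorem after it then finish the argument as in the paper.
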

\noindent\textbf{Proof.} The almost sure convergence of Algorithm 3 to a fixed point which corresponds to the optimal value function for the MDP $(\hat{Z}^{*},a_{q})$ follows from \cite[Theorem 8]{KSYContQLearning} whereas the near optimality of learned policy $\gamma(I^{M})=\arg\min_{a\in \mathbb{A}} Q^{*}(I^{M},a)$ for the KSPISP stems from Theorem \ref{boundonperformance}. \qed

\begin{remark}[Comparison of the Algorithms]
\begin{itemize}
\item[(i)] Algorithm \ref{alg:AQPQ} (AQPQ) requires system knowledge for Bayesian updates during the implementation as well as predictor stability in total variation (to ensure the existence of an ergodic invariant probability measure).
\item[(ii)] Algorithm \ref{alg:SQPQ} (SQPQ) leads to faster convergence, but requires access to a simulation device, in addition to system knowledge for Bayesian updates. However, it does not require predictor stability (due to the synchronous nature and access to simulation outcomes).
\item[(iii)] Algorithm \ref{alg:QSWPSP} (QSWPSP) is completely model free as it does not require the computation of Bayesian updates. However, it requires (geometric) predictor stability in expected total variation. 
An important benefit of the sliding window approximation introduced in Section \ref{Finite memory approximations} is that apriori knowledge of the support of the invariant probability measure induced by the exploration policy is not needed to determine an appropriate initialization, unlike AQPQ, as long as under such policy all possible values of measurements occur infinitely often in the case where the measurement spaces are finite. 
\end{itemize}
\end{remark}

\section{Numerical Examples}
Here, we will present several numerical examples to demonstrate the findings of this paper. In particular, in Section \ref{K step numerics} we present an application of Q-learning to two different problems under the KSPISP. In Section \ref{sliding mem numerics}, we present a numerical solution to an example under KSPISP with a finite length sliding window of common information.  

\subsection{$K$-step periodic information sharing pattern} \label{K step numerics}
\subsubsection{Example I}  Consider $\mathbb{X}=\{0,1,2\}$, $\mathbb{U}^{1}=\mathbb{U}^{2}=\mathbb{Y}^{1}=\mathbb{Y}^{2}=\{0,1\}$; $Q^{1}=Q^{2}=\begin{pmatrix}
    0.5 & 0.5\\
    0 & 1\\
    0 & 1\\
\end{pmatrix}$. Let 
\begin{eqnarray*}
&&\mathcal{T}(\cdot | \cdot) = 
\begin{pmatrix}
   \frac{1}{2} & \frac{1}{2} & 0\\
   \frac{1}{5} & \frac{3}{5} & \frac{1}{5}\\
   \frac{1}{3} & \frac{1}{3} & \frac{1}{3}
\end{pmatrix}
\mathbbm{1}_{\{ u^{1}_{t}=0, u^{2}_{t}=0\}}  \\
&&\quad  + \begin{pmatrix}
   \frac{1}{4} & \frac{1}{2} & \frac{1}{4}\\
   \frac{1}{6} & \frac{2}{3} & \frac{1}{6}\\
   \frac{2}{5} & \frac{2}{5} & \frac{1}{5}
\end{pmatrix} \mathbbm{1}_{\{ u^{1}_{t} =0, u^{2}_{t}=1\}}\\
&& + \begin{pmatrix}
   \frac{1}{2} & 0 & \frac{1}{2}\\
   \frac{1}{4} & \frac{1}{4} & \frac{1}{2}\\
   \frac{1}{2} & \frac{1}{4} & \frac{1}{4}
\end{pmatrix}
\mathbbm{1}_{\{ u^{1}_{t}=1, u^{2}_{t}=0\}}  + \begin{pmatrix}
   \frac{1}{3} & \frac{1}{3} & \frac{1}{3}\\
   \frac{1}{2} & \frac{1}{2} & 0\\
   \frac{1}{5} & \frac{3}{5} & \frac{1}{5}
\end{pmatrix}
\mathbbm{1}_{\{ u^{1}_{t}=1, u^{2}_{t}=1\}}
\end{eqnarray*} with $\beta=0.01$, under the KSPISP with $K=2$, consider the following cost functional $c(x,u_{1},u_{2})=(x-u_{1}-u_{2})^{2} + u_{1}^{2}  +u_{2}^2.$ We first quantize the spaces of predictors by quantizing $[0,1]^{3}$ with a step-size of $0.1$ and then normalizing every vector and pruning duplicates. We then estimate the transition kernel $\theta$ using $3\times 10^{5}$ samples for every state action pair. A benefit of this approach is that it allows for parallelization over each state with no communication overhead. We note that such an empirical model learning is equivalent to quantized $Q$-learning as discussed in \cite{zhou2024robustness},\cite{bicer2025quantizer}. Next, using the estimated kernel, we apply algorithm 2 as well as value iteration. In particular, we use $\alpha_{t}=\frac{1}{(1+t)^{0.85}}$ which is a choice motivated by \cite{evendar}. In Table \ref{tab:finite-spaces}, the first column represents the initial distribution $\mu_{0}$ such that $x_{0} \sim \mu_{0}$. The second column represents the value obtained by averaging the performance achieved in the original problem when the optimal policies obtained from the final Q-matrix are used. In particular the average is taken over $10^{5}$ samples and the infinite horizon cost criteria is approximated with one whereby $t\in\{0,\cdots,10\}$. The third column represents the optimal value function derived from Q-learning whereas the last column is the optimal value derived from value iteration.

\begin{table}[!htbp]
\centering
\caption{Example I}
\label{tab:finite-spaces}

\begin{tabular}{|l|l|l|l|}
\hline
\textbf{Initial distribution}           & \textbf{$J^{*}_{Sim}$} & \textbf{$J^{*}_{Q}$} & \textbf{$V^{*}$} \\ \hline
{[}0.3333	0.3333	0.3333{]}             & 1.3458                & 1.3441               & 1.3441                 \\ \hline
{[}0.3889	0.1667	0.4444{]} & 1.4519                & 1.4552                & 1.4552                 \\ \hline
{[}0.3889	0.2222	0.3889{]}    & 1.4023                & 1.3997              & 1.3997                \\ \hline
{[}0.3889	0.3889	0.2222{]} & 1.2352                & 1.2329
&1.2329                    \\ \hline 
{[}0.3889	0.4444	0.1667{]} & 1.1287                & 1.1218              & 1.1218                \\ \hline
{[}0.3913	0.2174	0.3913{]} & 1.4006                & 1.4021             & 1.4021                \\ \hline
{[}0.3913	0.3913	0.2174{]}  & 1.2273                 & 1.2281              & 1.2281               \\ \hline
{[}0.4	0.1	0.5{]}     & 1.5117                & 1.5108               & 1.5108                \\ \hline
\end{tabular}
\end{table}

    It is worth noting that the kernel $\theta$ is such that only a fraction ($<10\%$) of all elements in our quantized predictor space have a positive transition probability given some action and predictor. We consider such predictors to be the \textbf{effective} states as all other predictors do not occur after some finite time regardless of the initial distribution.  

Figure \ref{fig:Q_plot_ex2} shows the convergence of the Q-learning algorithm.

\noindent
\begin{minipage}{\linewidth}

    \centering
    \includegraphics[width=0.8\linewidth]{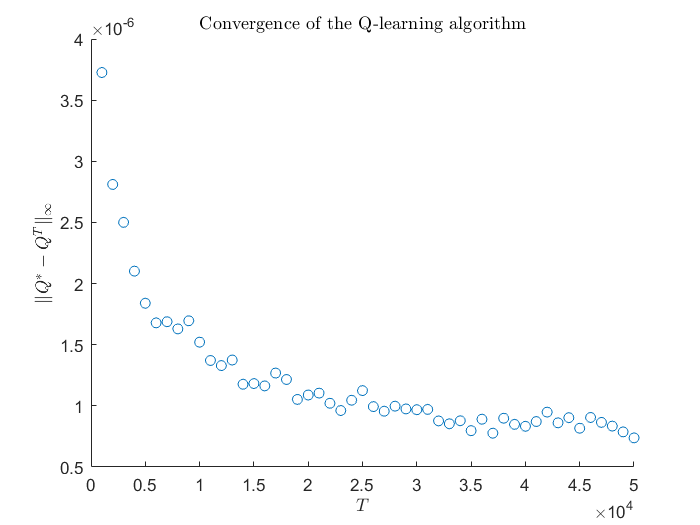}
    \captionof{figure}{Convergence of the quantized Q-learning algorithm for Example I}
    \label{fig:Q_plot_ex2}
\end{minipage}
 Next, we consider the following example.
\subsubsection{Example II} Consider $\mathbb{X}=\{0,1,2\}$, $\mathbb{U}^{1}=\mathbb{U}^{2}=\mathbb{Y}^{1}=\mathbb{Y}^{2}=\{0,1\}$. 
\begin{align*}
&Q^{1}=Q^{2}=\begin{pmatrix}
    0.5 & 0.5\\
    0 & 1\\
    0 & 1\\
\end{pmatrix}.\\
&\mathcal{T}(\cdot | \cdot) = 
\begin{pmatrix}
   \frac{1}{3} & \frac{1}{3} & \frac{1}{3}\\
   \frac{1}{3} & \frac{1}{3} & \frac{1}{3}\\
   \frac{1}{3} & \frac{1}{3} & \frac{1}{3}
\end{pmatrix} 1_{\{ u^{1}_{t}\not = u^{2}_{t}\}} \\
& \qquad \quad \quad + \begin{pmatrix}
   0 & 0.5 & 0.5\\
   0.5 & 0 & 0.5\\
   0.5 & 0.5 & 0
\end{pmatrix} 1_{\{ u^{1}_{t} = u^{2}_{t}\}}\\
& c(x,u_{1},u_{2})=(x-u_{1}-u_{2})^{2} + u_{1}^{2}  +u_{2}^2
\end{align*}
and $\beta=0.01$ with $K=2$.

Similar to the last example, we first provide a table which shows the results of Q-learning, value iteration, and performance in terms of the original model for various initial distributions (Table \ref{tab:example_II}). 

\begin{table}[H]
    \centering
    \caption{Example II}
    \label{tab:example_II}
    \begin{tabular}{|l|l|l|l|l|l|}
        \hline
        \textbf{Initial distribution} & \textbf{$J^{*}_{Sim}$} & \textbf{$J^{*}_{Q}$} & \textbf{$V^{*}$} \\ \hline
        {[}0	0.5	0.5{]}                   & 1.5073 & 1.5135 & 1.5135 \\ \hline
        {[}0.33333	0.33333	0.33333{]}     & 1.3645 & 1.3471 & 1.3471 \\ \hline
        {[}0.38889	0.16667	0.44444{]}     & 1.4528 & 1.4582 & 1.4582 \\ \hline
        {[}0.38889	0.22222	0.38889{]}     & 1.4308 & 1.4027 & 1.4027 \\ \hline
        {[}0.38889	0.38889	0.22222{]}     & 1.1845 & 1.236  & 1.236  \\ \hline
        {[}0.38889	0.44444	0.16667{]}     & 1.0952 & 1.1254 & 1.1254 \\ \hline
        {[}0.3913	0.21739	0.3913{]}       & 1.4102 & 1.4051 & 1.4051 \\ \hline
        {[}0.3913	0.3913	0.21739{]}       & 1.2239 & 1.2312 & 1.2312 \\ \hline
        {[}0.4	0.1	0.5{]}                & 1.5297 & 1.5138 & 1.5138 \\ \hline
        {[}0.4	0.15	0.45{]}              & 1.4276 & 1.4638 & 1.4638 \\ \hline
    \end{tabular}
\end{table}


\noindent
\begin{minipage}{\linewidth}
    \centering
    \includegraphics[width=0.9\linewidth]{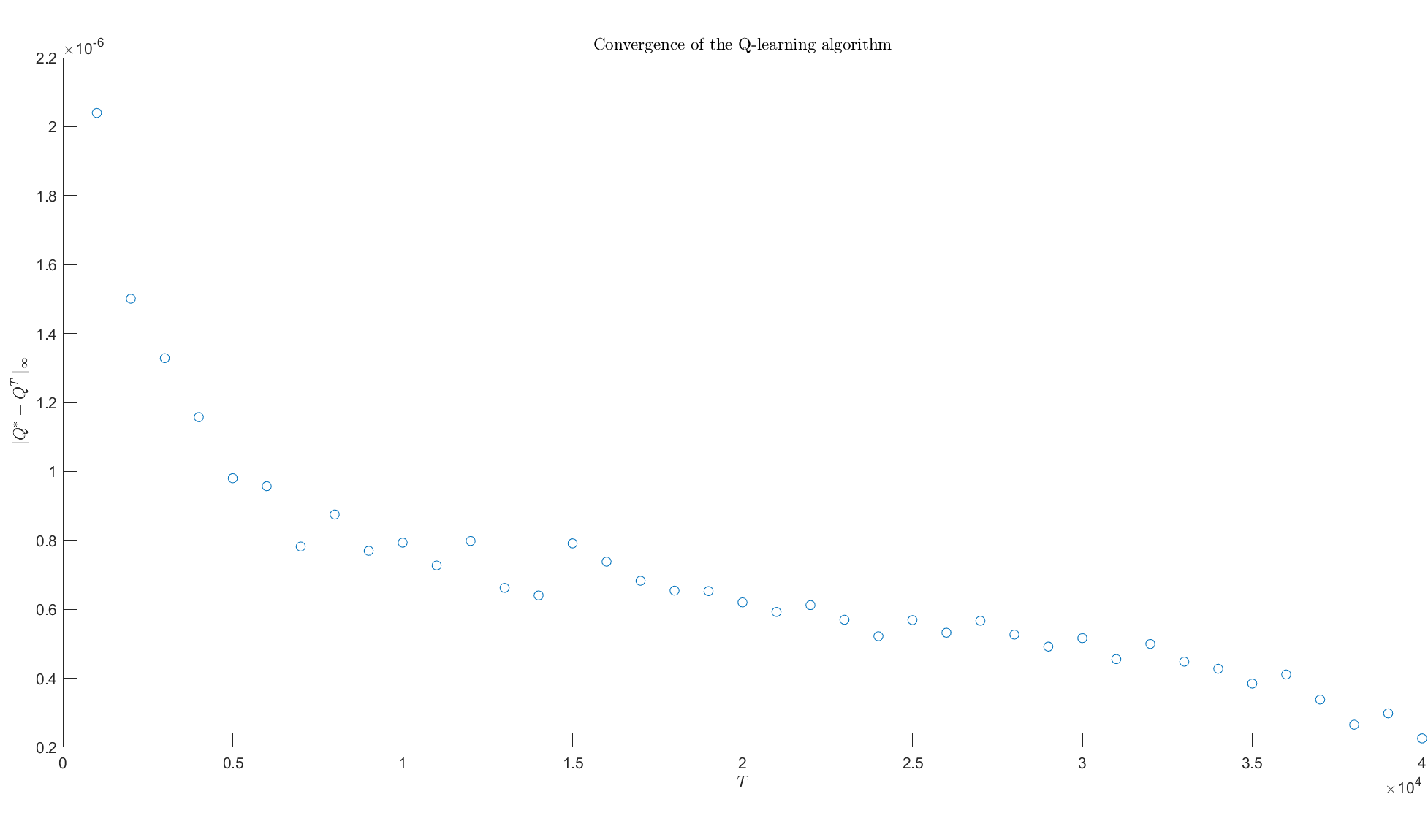}
    \captionof{figure}{Convergence of the quantized Q-learning algorithm for Example II}
    \label{fig:Q_plot_ex_II}
\end{minipage}

\begin{remark}
It is important to note that a significant challenge, when implementing Q-learning for the K-step periodic information sharing pattern, is that the dimension of the action space is very large. In fact, even in the simple case when $K=2$, we get that the dimension of the action space is $\displaystyle \prod\limits_{i=1}^{N}|\mathbb{U}^{i}|^{|\mathbb{Y}^{i}|^{2}+|\mathbb{Y}^{i}|}$. This is a computationally demanding problem even when all the parameters of the original problem are relatively simple.
\end{remark}

\subsection{Sliding memory periodic information sharing pattern} \label{sliding mem numerics}
Consider the MDP introduced in Theorem \ref{theorem8} and recall that without loss in generality the local policies of each agent can be replaced with their actions $u^{i}$. For $M=2$ $K=2$, we first estimate using empirical measures the transition kernel of the resulting kernel and then apply value iteration. Here, again $\beta=0.01$. Table \ref{tab:SW_table} represents the optimal value for select states. In particular the first eighth columns correspond to a given window of common information and the ninth column represents the corresponding optimal value.
\begin{table}[h!] 
\centering
\caption{Value iteration for sliding memory periodic information sharing pattern}
\label{tab:SW_table}
\begin{tabular}{|c|c|c|c|c|c|c|c|c|} 
\hline
$y^{1}_{0}$ & $y^{1}_{1}$ & $y^{2}_{0}$ & $y^{2}_{1}$ & $u^{1}_{0}$ & $u^{1}_{1}$ & $u^{2}_{0}$ & $u^{2}_{1}$ & $V^{*}$ \\ 
\hline
0 & 0 & 0 & 0 & 0 & 0 & 0 & 0 & 0.509596 \\ \hline
0 & 0 & 0 & 0 & 0 & 0 & 0 & 1 & 1.261110 \\ \hline
0 & 0 & 0 & 0 & 0 & 0 & 1 & 0 & 1.510460 \\ \hline
0 & 0 & 0 & 0 & 0 & 0 & 1 & 1 & 1.344227 \\ \hline
0 & 0 & 0 & 0 & 0 & 1 & 0 & 0 & 0.509596 \\ \hline
0 & 0 & 0 & 0 & 0 & 1 & 0 & 1 & 1.261110 \\ \hline
0 & 0 & 0 & 0 & 0 & 1 & 1 & 0 & 1.510460 \\ \hline
0 & 0 & 0 & 0 & 0 & 1 & 1 & 1 & 1.344227 \\ \hline
0 & 0 & 0 & 0 & 1 & 0 & 0 & 0 & 0.509596 \\ \hline
0 & 0 & 0 & 0 & 1 & 0 & 0 & 1 & 1.261110 \\ \hline
0 & 0 & 0 & 0 & 1 & 0 & 1 & 0 & 1.510460 \\ \hline
0 & 0 & 0 & 0 & 1 & 0 & 1 & 1 & 1.344227 \\ \hline
0 & 0 & 0 & 0 & 1 & 1 & 0 & 0 & 0.509596 \\ \hline
0 & 0 & 0 & 0 & 1 & 1 & 0 & 1 & 1.261110 \\ \hline
0 & 0 & 0 & 0 & 1 & 1 & 1 & 0 & 1.510460 \\ \hline
0 & 0 & 0 & 0 & 1 & 1 & 1 & 1 & 1.344227 \\ \hline
0 & 0 & 0 & 1 & 0 & 0 & 0 & 0 & 0.509596 \\ \hline
0 & 0 & 0 & 1 & 0 & 0 & 0 & 1 & 1.261110 \\ \hline
0 & 0 & 0 & 1 & 0 & 0 & 1 & 0 & 1.510460 \\ \hline
\end{tabular}
\end{table}

\section{Conclusion} 
In this paper, we proved that finite memory policies are near optimal for the KSPISP by replacing the ever increasing common information with a finite sliding window of information. We then rigorously showed that Q-learning can be employed to arrive at near optimal solutions under a variety of constructions. It is also worth noting that while we have focused on the case of finite measurement spaces for the QSWPSP, in the case of continuous measurements, since the latter become part of the state in the equivalent MDP $(\hat{Z}_{q},a_{q})$ one can discretize the measurements (thus leading to a discretization of the extended state $\hat{Z}_{q}$) in view of Proposition \ref{prop tilde theta WF} as in \cite{SaldiYukselLinder2020Asymptoticoptimalityoffinitemodelapproximations}. The QSWPSP could then be used to obtain a near optimal solution for such continuous valued measurements as well.  



\section{Appendix}

\subsection{Proof of Proposition \ref{prop tilde theta WF}}
Consider a sequence \[(\pi_{q-M}^{n},I^{M}_{q,n},a_{q,n})\rightarrow (\pi_{q-M},I^{M}_{q},a_{q})\] as $n\rightarrow \infty$ and let $f$ be a continuous and bounded function. Then, it is sufficient to show that 
\begin{eqnarray*}
    &&\int f(\Bar{G}(\pi_{q-M}^{n},I^{M}_{q,n}),\mathbf{y}_{[(q+1-M)K,qK-1]}^{n},\\
    &&a_{[(q+1-M),q-1]}^{n},a_{q}^{n},\mathbf{y}_{[qK,qK+K-1]})\\
    &&P\bigg(dy_{[qK,qK+K-1]}^{[1,N]} \mid x_{qK}, a_{q}^{n}\bigg)\phi(\pi_{q-M}^{n},I^{M}_{q,n})(dx_{qK})\\
    && \rightarrow \int f(\Bar{G}(\pi_{q-M},I^{M}_{q}),\mathbf{y}_{[(q+1-M)K,qK-1]},\\
    &&a_{[(q+1-M),q-1]},a_{q},\mathbf{y}_{[qK,qK+K-1]})\\
    &&P\bigg(dy_{[qK,qK+K-1]}^{[1,N]} \mid x_{qK}, a_{q}\bigg)\phi(\pi_{q-M},I^{M}_{q})(dx_{qK})\\
\end{eqnarray*}

By the generalized DCT theorem \cite[Theorem 3.5]{serfozo1982convergence}, and the continuity of $\Bar{G}$, which itself follows from the continuity of $G$ (see the proof of \cite[Theorem 4.1]{OMZSY-centralized-reduction}), it is sufficient to check that 
\begin{eqnarray*}
&&P\bigg(dy_{[qK,qK+K-1]}^{[1,N]} \mid x_{qK}, a_{q}^{n}\bigg)\phi(\pi_{q-M}^{n},I^{M}_{q,n})(dx_{qK})\rightarrow \\ 
&& \quad \quad P\bigg(dy_{[qK,qK+K-1]}^{[1,N]} \mid x_{qK}, a_{q}\bigg)\phi(\pi_{q-M},I^{M}_{q})(dx_{qK})    
\end{eqnarray*} weakly. Again, by \cite[Theorem 4.3]{OMZSY-centralized-reduction}, we have that  $\phi(\pi_{q-M}^{n},I^{M}_{q,n})(dx_{qK})\rightarrow \phi(\pi_{q-M},I^{M}_{q})(dx_{qK})$ weakly. Thus, again by the generalized DCT, it is sufficient to check that 

\begin{eqnarray*}
   && P\bigg(dy_{[qK,qK+K-1]}^{[1,N]} \mid x_{qK}^{n}, a_{q}^{n}\bigg)\rightarrow \\
   &&P\bigg(dy_{[qK,qK+K-1]}^{[1,N]} \mid x_{qK}, a_{q}\bigg)
\end{eqnarray*} weakly for any $x_{qK}^{n} \rightarrow x_{qK}$. Let $\rho$ denote the bounded Lipschitz metric as defined in Section \ref{convergence of probability measures}. Next, we have that 
\begin{eqnarray*}
  && \rho(P\bigg(dy_{[qK,qK+K-1]}^{[1,N]} \mid x_{qK}^{n}, a_{q}^{n}\bigg), \\
&& \qquad \qquad P\bigg(dy_{[qK,qK+K-1]}^{[1,N]} \mid x_{qK}, a_{q}\bigg))\\
  &&\leq \rho(P\bigg(dy_{[qK,qK+K-1]}^{[1,N]} \mid x_{qK}^{n}, a_{q}^{n}\bigg),\\
  && \qquad \qquad P\bigg(dy_{[qK,qK+K-1]}^{[1,N]} \mid x_{qK}^{n}, a_{q}\bigg))\\
  && \quad+\rho(P\bigg(dy_{[qK,qK+K-1]}^{[1,N]} \mid x_{qK}^{n}, a_{q}\bigg),\\
  && \qquad \qquad P\bigg(dy_{[qK,qK+K-1]}^{[1,N]} \mid x_{qK}, a_{q}\bigg)) \\
  && \leq \|P\bigg(dy_{[qK,qK+K-1]}^{[1,N]} \mid x_{qK}^{n}, a_{q}^{n}\bigg)\\
  && \qquad \qquad -P\bigg(dy_{[qK,qK+K-1]}^{[1,N]} \mid x_{qK}^{n}, a_{q}\bigg)\|_{TV}\\
  && \quad+\|P\bigg(dy_{[qK,qK+K-1]}^{[1,N]} \mid x_{qK}^{n}, a_{q}\bigg)\\
  && \qquad \qquad -P\bigg(dy_{[qK,qK+K-1]}^{[1,N]} \mid x_{qK}, a_{q}\bigg)\|_{TV} 
\end{eqnarray*} Now, by the proof 
of \cite[Theorem 4.3]{OMZSY-centralized-reduction} it follows that
\begin{eqnarray*}
    &&\|P\bigg(dy_{[qK,qK+K-1]}^{[1,N]} \mid x_{qK}^{n}, a_{q}^{n}\bigg)\\
    && \quad -P\bigg(dy_{[qK,qK+K-1]}^{[1,N]} \mid x_{qK}^{n}, a_{q}\bigg)\|_{TV}\rightarrow 0
\end{eqnarray*}
 Moreover, by the continuity of $\tau$ in total variation, we have that 
\begin{eqnarray*}
 && \|P\bigg(dy_{[qK,qK+K-1]}^{[1,N]} \mid x_{qK}^{n}, a_{q}\bigg)\\
&& \quad -P\bigg(dy_{[qK,qK+K-1]}^{[1,N]} \mid x_{qK}, a_{q}\bigg)\|_{TV}\rightarrow 0   
\end{eqnarray*}
 \qed

\vspace{-1.5cm}
\begin{IEEEbiography}
{\bf Omar Mrani-Zentar} received his B.Sc. and M.Sc. degrees in Mathematics from the University of British Columbia and is currently a PhD student in Mathematics and Statistics at Queen's University. His research interests include stochastic control theory and probability.
\end{IEEEbiography}

\vspace{-2.55cm}

\begin{IEEEbiography}
{\bf Serdar Y\"uksel} (S'02, M'11) received his B.Sc. degree in Electrical and Electronics Engineering from Bilkent University in 2001; M.S. and Ph.D. degrees in Electrical and Computer Engineering from the University of Illinois at Urbana-
Champaign in 2003 and 2006, respectively. He was a post-doctoral researcher at Yale University before joining Queen's University as an Assistant Professor in the Department of Mathematics and Statistics, where he is now a Professor. His research interests are on stochastic control, decentralized control, information theory and probability. He is a co-author of several books and has been an editor with several journals. 
\end{IEEEbiography}

\end{document}